\documentclass[a4paper,12pt]{amsproc}

\title{A number of topologies on $C(X)$ lying between point-open topology and the topology of unifrom convergence}
\usepackage{amsfonts, amsmath, amssymb, graphicx, mathrsfs, geometry}
\usepackage[all]{xy}
\usepackage{tikz,float}
\usetikzlibrary{shapes.geometric, arrows}
\tikzstyle{arrow} = [thick,->,>=stealth]

\usetikzlibrary{cd, arrows.meta, positioning}
\usepackage{enumitem}
\newdir{ >}{!/6pt/@{ }*:(1,0.2)@_{>}*:(1,-0.2)@^{>}}

\theoremstyle{plain}

\newtheorem{theorem}{Theorem}[section]

\title{A number of properties enjoyed by two specially constructed  topologies on $C(X)$}
\usepackage{amsfonts, amsmath, amssymb, graphicx, mathrsfs}
\usepackage[all]{xy}

\newdir{ >}{!/6pt/@{ }*:(1,0.2)@_{>}*:(1,-0.2)@^{>}}

\theoremstyle{plain}

\newtheorem{lemma}[theorem]{Lemma}

\theoremstyle{definition}
\newtheorem{definition}[theorem]{Definition}

\newtheorem{remark}[theorem]{Remark}
\newtheorem{remarks}[theorem]{Remarks}
\newtheorem{counter example}[theorem]{Counter Example}
\newtheorem{notation}[theorem]{Notation}

\newtheorem{corollary}[theorem]{Corollary}

\newtheorem{example}[theorem]{Example}

\numberwithin{equation}{section}

\author[S. Dey]{Soumajit Dey}	\address{Department of Pure Mathematics, University of Calcutta, 35, Ballygunge Circular Road, Kolkata 700019, West Bengal, India}	\email{deysoumajit8@gmail.com}

\author[S. K. Acharyya]{Sudip Kumar Acharyya}	\address{Department of Pure Mathematics, University of Calcutta, 35, Ballygunge Circular Road, Kolkata
	700019, West Bengal, India}	\email{sdpacharyya@gmail.com}

\author[D. Mandal]{Dhananjoy Mandal} \address{Department of Pure Mathematics, University of Calcutta, 35 , Ballygunge Circular Road, Kolkata 700019, West Bengal, India}  \email{dmandal.cu@gmail.com / dmpm@caluniv.ac.in}
\keywords{$\aleph_\alpha$-$separable$; $Pathwise connected$; $q$-space; \v{C}ech-complete; cellularity; weight; Lindel\"{o}f number}
\subjclass[2020]{ 54A25, 54C30, 54C35, 54D45}
\begin{document}
	\title [The space $C_{\aleph_\alpha}(X)$]{A special kind of topology on $C(X)$ lying between the point-open topology and the topology of uniform convergence}
	\thanks {The first author extends immense gratitude and thanks to the University Grants Commission, New Delhi, for the award of research fellowship (NTA Ref. No. 211610214962).}
	\maketitle
	
	\begin{abstract}
		Let $X$ be a topological space. For each transfinite cardinal number $\aleph_\alpha$, we define a topology $C_{\aleph_\alpha}(X)$ on the ring $C(X)$. With $\aleph_\alpha=\aleph_0$, $C_{\aleph_\alpha}(X)$ reduces to the space $C_p(X)$. We prove that for $\aleph_\alpha\geq \aleph_1$, $C_{\aleph_\alpha}(X)$ is pathwise connected if and only if it is connected if and only if $X$ is pseudocompact. Here, we define $\aleph_\alpha$-separable space and furthermore, we show that $X$ is $\aleph_\alpha$-separable when and only when $C_{\aleph_\alpha}(X)$ is metrizable when and only when it is a sequential space. Later we introduce two new cardinal functions, namely $cc_{\aleph_\alpha}(X)$ and $ac_{\aleph_\alpha}(X)$ which turned out to be the character and pseudocharacter of $C_{\aleph_\alpha}(X)$ respectively. At the end of this article we show that a number cardinal functions associated with the space $C_{\aleph_\alpha}(X)$ are equal.
	\end{abstract}

	\bibliographystyle{plain}
	\section{Introduction}
	
	Given a Tychonoff topological space $X$, there are, in the literature, a variety of topologies on the ring $C(X)$ of all real-valued continuous functions defined on $X$. The point-open topology on $C(X)$ usually denoted by $C_p(X)$ and the topology of uniform convergence on $C(X)$, familiarly designated as $C_u(X)$ are two such prominent topologies. There is a big literature addressing various problems concerning these two topologies. In the present paper we deal with a number of pertinent problems associated with yet another topology that lies between these two topologies on $C(X)$. To be specific corresponding to transfinite initial ordinal number $\aleph_\alpha$, let $\mathcal{A}_{\aleph_\alpha}=\{A\subseteq X:|A|<\aleph_\alpha\}$. For any $f\in C(X)$, $\epsilon>0$ and $A\in \mathcal{A}_{\aleph_\alpha}$, let $$B(f,A,\epsilon)=\{g\in C(X):\sup\limits_{x\in A}|f(x)-g(x)|<\epsilon\}.$$	Then the family $\mathcal{B}_{\aleph_\alpha}=\{B(f,A,\epsilon):f\in C(X),A\in \mathcal{A}_{\aleph_\alpha}\text{ and }\epsilon>0\text{ a real number}\}$ is an open base for a topology on $C(X)$, which we call the $\aleph_\alpha$-topology on $C(X)$ and denote it by $C_{\aleph_\alpha}(X)$. Incidentally several interesting problems connected with a special version of this topology viz., $\aleph_1$-topology is already investigated in \cite{CRMR 2025,PM 2013,RS 2021}. We would like to mention that the space $C_s(X)$, initiated in \cite{PM 2013} is the same as $C_{\aleph_1}(X)$. It turns out that a few results concerning $C_s(X)$ in the articles \cite{CRMR 2025,RS 2021} are special cases of facts, obtained in our present paper. However there is a little overlap between the results achieved in our paper on using different techniques and these established in \cite{CRMR 2025,PM 2013,RS 2021}. To substantiate these comments, it is high time we narrate briefly about the organization of the technical section of our article.
	
	In Section 2 of this article we first realize that given an initial ordinal number $\aleph_\alpha$ and a topological space $X$ (not necessarily Tychonoff), it is possible to construct a Tychonoff space $Y$ such that the rings $C_{\aleph_\alpha}(X)$ and $C_{\aleph_\alpha}(Y)$ are topologically isomorphic. In the same section we cite several examples to illustrate the independence of the topology of the space $C_{\aleph_\alpha}(X)$ with a few other known topologies viz., the compact-open topology on $C(X)$, compact-$G_\delta$-open topology on $C(X)$, pseudo-compact open topology on $C(X)$ each of which lies between the point-open topology and the uniform topology on $C(X)$.
	
	On of the main results proved in Section 3 says that with the restriction $\aleph_\alpha\geq \aleph_1$, $C^*(X)$ of all bounded real-valued continuous functions on $X$ is a pathwise connected subspace of $C_{\aleph_\alpha}(X)$ and in particular a closed subset of $C_{\aleph_\alpha}(X)$. This clearly extends the fact that $C^*(X)$ is dense in $C_s(X)$ if and only if $X$ is pseudocompact, a result proved in Proposition 5.2 in \cite{PM 2013}. It is also realized in the same section that the component, the quasicomponent and the path component of any $f\in C_{\aleph_\alpha}(X)$ are all identical and equal to $f+C^*(X)$. It is easy to check that $C_{\aleph_\alpha}(X)$ is a topological group. It turns out that $C_{\aleph_\alpha}(X)$ is a topological ring if and only if $X$ is pseudocompact [compare with Corollary 3.11 in \cite{PM 2013}]. We also realize that the number of non-empty clopen sets in $C_{\aleph_\alpha}(X)$ is either 1 or or at least $\mathfrak{c}$. We also prove in the same section that several localized version of compactness like conditions for the space  $C_{\aleph_\alpha}(X)$ coincide. It is established that $C_{\aleph_\alpha}(X)$ is locally compact if and only if it is locally pseudocompact if and only if $X$ is finite.

	In Section 4, we find out several conditions involving the space $C_{\aleph_\alpha}(X)(\neq (C_p(X)))$, each necessary and sufficient for the metrizability of $C_{\aleph_\alpha}(X)$. Thus for instance it is realized that $C_{\aleph_\alpha}(X)(\neq (C_p(X)))$ is metrizable by a complete metric if and only if $C_{\aleph_\alpha}(X)=C_u(X)$. Any one of these condition holds good if and only if $X$ is $\aleph_\alpha$-separable (defined in an appropriate place) (Theorem \ref{metrizability}) which is further equivalent to the \v{C}ech-completeness of $C_{\aleph_\alpha}(X)$ (Theorem \ref{completeness Calpha}). In particular $C_s(X)=C_{\aleph_1}(X)$ is metrizable if and only if $X$ is separable if and only if $C_s(X)$ is \v{C}ech-complete \cite[Proposition 4.1]{CRMR 2025}. 
	
	In Section 5, we establish a number of facts demonstrating interrelation between various cardinal functions associated with the space $C_{\aleph_\alpha}(X)$. In this section, we introduce two new cardinal functions, viz., the almost-$\aleph_\alpha$-number $ac_{\aleph_\alpha}(X)$ and the $\aleph_\alpha$-closed number $cc_{\aleph_\alpha}(X)$ for the space $X$. We realize that the pseudocharacter of $C_{\aleph_\alpha}(X)$ is equal to $ac_{\aleph_\alpha}(X)$ (Theorem \ref{pseudocharacter Calpha}). It follows as a consequence, unlike the compact-open topology that $C_{\aleph_\alpha}(X)((\neq (C_p(X))) $ is metrizable if and only if it is submetrizable. Furthermore, we show that the character of the space $C_{\aleph_\alpha}(X)$ is identical to $cc_{\aleph_\alpha}(X)$ (Theorem \ref{character Calpha}). In the concluding theorem of Section 5, we show that the conditions separability and second countability of the space $C_{\aleph_\alpha}(X)$ become equivalent and equivalent to the compactness and metrizability condition for the ambient space $X$ (Theorem \ref{all equivCalpha}).

	\section{The space $C_{\aleph_\alpha}(X)$ and the adequacy of Tychonoff spaces $X$ to study $C_{\aleph_\alpha}(X)$ }
	
		\begin{theorem}\label{tychonoff calpha}
		Given a topological space $X$ and a transfinite cardinal number $\aleph_\alpha$, it is possible to construct a Tychonoff space $Y$ such that the spaces $C_{\aleph_\alpha}(X)$ and $C_{\aleph_\alpha}(Y)$ are topologically isomorphic.
	\end{theorem}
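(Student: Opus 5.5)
The approach is the standard Tychonoff reduction, i.e.\ the construction used for $C_p$ and $C_k$ in the classical literature. Let $\tau$ be the weak topology on $X$ induced by $C(X)$, and define an equivalence relation $x\sim y$ iff $f(x)=f(y)$ for all $f\in C(X)$. Let $Y=X/\!\sim$ with the weak topology induced by the functions $\hat f([x])=f(x)$, $f\in C(X)$, and let $q\colon X\to Y$ be the quotient map $q(x)=[x]$. We then proceed in three steps.

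\begin{enumerate}[label=(\roman*)]
\item The space $Y$ is Tychonoff. Its topology is the initial topology for the family $\{\hat f: f\in C(X)\}$ of real-valued functions, so it is completely regular. Distinct points of $Y$ are separated by some $\hat f$, so $Y$ is Hausdorff.
\item The map $\Phi\colon C(Y)\to C(X)$, $\Phi(g)=g\circ q$, is a ring isomorphism. It is well defined because $q$ is continuous: each $\hat f\circ q=f$ is continuous. It is injective because $q$ is surjective. For surjectivity, take $f\in C(X)$; the map $\hat f$ is continuous on $Y$ by construction and $\Phi(\hat f)=f$. Clearly $\Phi$ preserves sums and products.
\item The map $\Phi$ is a homeomorphism between $C_{\aleph_\alpha}(Y)$ and $C_{\aleph_\alpha}(X)$. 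This step carries the actual content, since it is where the cardinality restriction $|A|<\aleph_\alpha$ must be respected. We compare basic open sets in both directions.
\begin{itemize}
\item For $A\subseteq X$ with $|A|<\aleph_\alpha$, the image $q(A)$ satisfies $|q(A)|\le|A|<\aleph_\alpha$. Moreover $\sup_{x\in A}|g(q(x))-h(q(x))|=\sup_{y\in q(A)}|g(y)-h(y)|$, so $\Phi^{-1}(B(\Phi g,A,\epsilon))=B(g,q(A),\epsilon)$.
\item For $B\subseteq Y$ with $|B|<\aleph_\alpha$, choose by the axiom of choice one point of $q^{-1}(y)$ for each $y\in B$. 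This gives $A\subseteq X$ with $q(A)=B$ and $|A|=|B|<\aleph_\alpha$, and then $\Phi(B(g,B,\epsilon))=B(\Phi g,A,\epsilon)$.
\end{itemize}
Hence $\Phi$ maps the base $\mathcal{B}_{\aleph_\alpha}$ of $C_{\aleph_\alpha}(Y)$ exactly onto the base $\mathcal{B}_{\aleph_\alpha}$ of $C_{\aleph_\alpha}(X)$. So $\Phi$ is a homeomorphism, and, being a ring isomorphism, a topological isomorphism.
\end{enumerate}

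Points (i) and (ii) are routine checks. The only step needing care is (iii): the transfer of small subsets, where choosing section points keeps cardinalities below $\aleph_\alpha$.
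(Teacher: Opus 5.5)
Your proof is correct and follows the paper's argument. You use the same quotient $Y=X/\!\sim$ with the weak topology induced by the maps $\hat f$, the same ring isomorphism $g\mapsto g\circ q$ (the paper cites Gillman--Jerison, Theorem 3.9, for your steps (i) and (ii)), and the same transfer of small sets by choosing one preimage per point. The only cosmetic difference is in the comparison of topologies: the paper reduces to basic neighbourhoods of $\underline{0}$ via homogeneity of topological groups, whereas you match arbitrary basic open sets directly and also write out the easy direction $A\mapsto q(A)$, which the paper leaves as ``analogous''.
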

	
	\begin{proof}
		We define a binary relation `$\sim$' on $X$ by the following rule: for $a,b\in X$, $a\sim b$ if and only if $f(a)=f(b)$ for each $f\in C(X)$. This `$\sim$' is an equivalence relation on $X$. Let $Y=X/\sim\equiv$ the set of all the corresponding `$\sim$' equivalent disjoint classes. Let $\tau:X\rightarrow Y$ be the canonical map $\tau(x)=[x]$$\equiv$ the equivalence class in $Y$ which contains $x$. For each $f\in C(X)$, we can associate a unique function $g_f:Y\rightarrow \mathbb{R}$ by the rule: $g_f\circ \tau=f$. We equip $Y$ with the smallest topology with respect to which each $g_f$, $f\in C(X)$ is a continuous map on $Y$. Then $\tau:X\rightarrow Y$ is a continuous map, $Y$ is a Tychonoff space and $\psi: C(Y)\rightarrow C(X)$ defined by $\psi(g)=g\circ \tau$ is a lattice isomorphism from the ring $C(Y)$ onto the ring $C(X)$ (for a detailed proof of the assertion, see \cite[Chapter 3, Theorem 3.9]{GJ 1960}). To complete this theorem, we need to show that $\psi$ is a homeomorphism between the spaces $C_{\aleph_\alpha}(Y)$ and $C_{\aleph_\alpha}(X)$. Since $C_{\aleph_\alpha}(Y)$ and $C_{\aleph_\alpha}(X)$ are topological groups, therefore homogeneous spaces, it suffices to show that $\psi$ exchanges the basic open neighborhoods of $\underline{0}$ in their respective spaces. For a subset $A$ of $Y$ with $|A|< \aleph_\alpha$ and $\epsilon>0$, we set $B_Y(\underline{0},A,\epsilon)= \{g\in C(Y):|g(y)|\leq\epsilon\text{ for all }y\in A\text{}\}$. Then the family of all such sets $B_Y(\underline{0},A,\epsilon)$, with $A\subseteq Y$ with $|A|<\aleph_\alpha$ and $\epsilon>0$ is a base for the neighborhoods of $\underline{0}$ in the space $C_{\aleph_\alpha}(Y)$. We first show that for each such choice of $A$ and $\epsilon>0$, $\psi(B_Y(\underline{0},A,\epsilon)=B_X(0,G,\epsilon)\equiv\{f\in C(X):|f(x)|\leq\epsilon\text{ for all }x\in G\}$ for some subset $G$ of $X$ with $|G|<\aleph_\alpha$. Indeed, for each $a\in A$ we can choose a point $c\in X$ such that $\tau(c)=a$, because $\tau:X\rightarrow Y $ is an onto map. Let $G$ be the aggregate of all such points $c\in X$ then $|G|\leq |A|<\aleph_\alpha$. We claim that $\psi(B_Y(\underline{0},A,\epsilon))=B_X(\underline{0},G,\epsilon)$. \underline{Proof of this claim:} let $g\in B_Y(\underline{0},A,\epsilon)$, then $g(y)\in [-\epsilon,\epsilon]$ for each $y\in A$, this implies that for each $x\in G$, $g(\tau(x))\in [-\epsilon,\epsilon]$ or $\psi(g(x))\in [-\epsilon,\epsilon]\implies\psi(g)\in B_X(\underline{0},G,\epsilon)$. Thus, $\psi(B_Y(\underline{0},A,\epsilon))\subseteq B_X(\underline{0},G,\epsilon)$. On the other hand if $f\in C(X)$ in such that $f\in B_X(\underline{0},G,\epsilon)$, then $|f(x)|\leq \epsilon$ for each $x\in G$. Now since $f=\psi(g)=g\circ \tau$ for some $g\in C(Y)$, this implies that $|g(\tau(x))|\leq\epsilon$ for each $x\in G$ which implies that $|g(y)|\leq\epsilon$ for each $y\in A$, i.e., $g\in B_Y(\underline{0},A,\epsilon)$. So, $B_X(\underline{0},G,\epsilon)\subseteq \psi(B_Y(\underline{0},A,\epsilon))$. Thus $\psi$ carries basic neighborhood of $\underline{0}$ in the space $C_{\aleph_\alpha}(Y)$ to basic neighborhood of $\underline{0}$ in the space $C_{\aleph_\alpha}(X)$. By making analogous arguments, it is not hard to prove that $\psi$ pulls back basic neighborhood of $\underline{0}$ in the space $C_{\aleph_\alpha}(Y)$ to the space $C_{\aleph_\alpha}(X)$. Therefore, altogether $\psi:C_{\aleph_\alpha}(Y)\rightarrow C_{\aleph_\alpha}(X)$ is a homeomorphism. 
	\end{proof}

	   In this section, our primary objective is to compare $C_{\aleph_\alpha}(X)$ with the various other topologies on $C(X)$, for a transfinite cardinal number $\aleph_\alpha$. It is easily verified that $\aleph_\alpha\leq \aleph_\beta$ implies $C_{\aleph_\alpha}(X)\subseteq C_{\aleph_\beta}(X)$. Furthermore, it is not hard to check that $C_p(X)=C_{\aleph_0}(X)\subseteq C_s(X)=C_{\aleph_1}(X)\subseteq C_{\aleph_2}(X)\subseteq\cdots\subseteq C_u(X)$ holds; moreover,  $C_p(X)=C_{\aleph_\alpha}(X)$ if and only if either $X$ is finite or $\aleph_\alpha=\aleph_0$. At this point, it is important to mention that there are few other topologies lying between $C_p(X)$ and $C_u(X)$ other than $C_k(X)$, namely the compact-$G_\delta$-open topology or $C_{kz}(X)$ and the pseudocompact open topology or $C_{ps}(X)$. Let us consider the following sets.
	\begin{itemize}

	\item   $\mathcal{A}_k=\{A\subseteq X:A\text{ is a compact subset of }X\},$
	
	 \item  $\mathcal{A}_{kz}=\{A\subseteq X:A\text{ is a compact-}G_\delta\text{-set in }X\},$ 
	 \item  $\mathcal{A}_{ps}=\{A\subseteq X:A\text{ is a pseudocompact subset of } X\},$
	 \item $\mathcal{A}_L=\{A\subseteq X:A\text{ is a Lindel\"{o}f subset of }X\}.$
	 
	  	\end{itemize}

	  Also for any $f\in C(X)$, $A\subseteq X$ and $\epsilon>0$, let us denote the set $$B(f,A,\epsilon)=\{g\in C(X):\sup\limits_{x\in A}|f(x)-g(x)|<\epsilon\}.$$ Then the following collections 
	  $\mathcal{B}_i=\{B(f,A,\epsilon):f\in C(X), A\in \mathcal{A}_i\text{ and }\epsilon>0\}$ forms an open base for the $C_i(X)$ where $i=k,kz,ps,L$ \cite{JK 2014,MN 1986,KR 1995}.

\begin{definition}
	The {density} \( d(X) \) of a space \( X \) is defined by
	\[
	d(X) = \aleph_0 + \min\{|D| : D \text{ is a dense subset of } X\}.
	\]
	
	A space \( X \) is {separable} if and only if \( d(X) = \aleph_0 \).
\end{definition}
\begin{definition}
	We call a space $X$ $\aleph_\alpha$-separable if $d(X)<\aleph_\alpha$.
\end{definition}

From the definition it is clear that a space $X$ is separable if and only if it is $\aleph_1$ separable. Also, note that if $X$ is $\aleph_\alpha$-separable then $X$ is $\aleph_\beta$ separable for any cardinal number $\aleph_\beta\geq \aleph_\alpha.$ Since $X$ is a Tychonoff space, the following result is immediate.

\begin{theorem}\label{qu}
	For a space $X$, $C_{\aleph_\alpha}(X)=C_u(X)$ if and only if $X$ is $\aleph_\alpha$-separable.
\end{theorem}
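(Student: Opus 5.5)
We prove the two implications separately, using throughout that $X$ is Tychonoff (justified by Theorem \ref{tychonoff calpha}) and that both $C_{\aleph_\alpha}(X)$ and $C_u(X)$ are topological groups. By homogeneity it is therefore enough to compare neighbourhood bases of $\underline{0}$. Since $C_{\aleph_\alpha}(X)\subseteq C_u(X)$ always holds, the statement reduces to deciding when every uniform neighbourhood $B(\underline{0},X,\epsilon)$ contains some basic set $B(\underline{0},A,\delta)$ with $|A|<\aleph_\alpha$.

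\emph{Sufficiency.} Suppose $d(X)<\aleph_\alpha$ and choose a dense set $D$ with $|D|<\aleph_\alpha$. (If $X$ is finite the two topologies trivially coincide.) Given $\epsilon>0$, we claim that
\[
B(\underline{0},D,\epsilon/2)\subseteq B(\underline{0},X,\epsilon).
\]
Indeed, if $|g(x)|<\epsilon/2$ for all $x\in D$, then continuity of $g$ and density of $D$ give $|g(x)|\le\epsilon/2<\epsilon$ for every $x\in X$. Hence $\sup_{x\in X}|g(x)|\le \epsilon/2<\epsilon$, and the $\aleph_\alpha$-topology is at least as fine as the uniform topology, so the two coincide.

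\emph{Necessity.} Suppose $C_{\aleph_\alpha}(X)=C_u(X)$. Then $B(\underline{0},X,1)$ contains some $B(\underline{0},A,\delta)$ with $|A|<\aleph_\alpha$. We show that $\overline{A}=X$, which gives $d(X)\le |A|+\aleph_0$. Assume instead that some $x_0\notin\overline{A}$. Complete regularity of $X$ yields $g\in C(X)$ with $g(x_0)=2$ and $g\equiv 0$ on $\overline{A}$. This $g$ lies in $B(\underline{0},A,\delta)$ but not in $B(\underline{0},X,1)$, a contradiction. So $A$ is dense in $X$.

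The main obstacle is the cardinal bookkeeping in the necessity direction, namely passing from $|A|<\aleph_\alpha$ to $d(X)<\aleph_\alpha$. Since $d(X)=\aleph_0+\min|D|$, we get $d(X)\le |A|+\aleph_0$. Because $\aleph_\alpha$ is transfinite, $\aleph_0\le\aleph_\alpha$. If $A$ is infinite, then $|A|+\aleph_0=|A|<\aleph_\alpha$. If $A$ is finite, then $d(X)=\aleph_0$, and $X=\overline{A}=A$ is finite, which is the degenerate case. In that case separability is automatic, and the only borderline value is $\aleph_\alpha=\aleph_0$, where the convention $d\ge\aleph_0$ has to be reconciled by treating finite $X$ as trivially satisfying both sides. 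Apart from this convention check, each step is a direct application of density and complete regularity.
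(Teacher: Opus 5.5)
Your proof is correct and is essentially the paper's argument: the paper calls the result immediate from Tychonoffness, your sufficiency step is verbatim its proof of $(1)\Rightarrow(2)$ in Theorem \ref{metrizability}, and your necessity step is the same point-versus-$\overline{A}$ separation it uses in Theorem \ref{character Calpha}. You are also right that the literal statement breaks for finite $X$ when $\aleph_\alpha=\aleph_0$; one caveat is that Tychonoffness is a standing hypothesis here rather than something Theorem \ref{tychonoff calpha} lets you reduce to, because replacing $X$ by its Tychonoff quotient $Y$ can lower the density.
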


\begin{corollary}
	For a space $X$, $C_u(X)=C_s(X)$ if and only if $X$ is separable.
\end{corollary}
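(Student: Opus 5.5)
The corollary is the special case $\aleph_\alpha=\aleph_1$ of Theorem \ref{qu}, so the plan is to prove Theorem \ref{qu} and then specialize. By the remark above, $C_s(X)=C_{\aleph_1}(X)$. Also, $d(X)<\aleph_1$ means exactly $d(X)=\aleph_0$, because $d(X)\geq\aleph_0$ by definition; this is separability. So $X$ is $\aleph_1$-separable if and only if it is separable, and the corollary follows at once. Both topologies are translation invariant, so it suffices to compare neighbourhoods of $\underline{0}$.

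For sufficiency in Theorem \ref{qu}, let $D$ be dense with $|D|<\aleph_\alpha$. If $X$ is finite, pick $D=X$. If $X$ is infinite, note that $d(X)<\aleph_\alpha$ and $\aleph_\alpha$ transfinite allow choosing a dense $D$ of cardinality $<\aleph_\alpha$ in either case. Take $g\in B(\underline{0},D,\epsilon/2)$. Then $|g|\leq\epsilon/2$ on $D$, so by continuity $|g|\leq\epsilon/2$ on $\overline{D}=X$, giving $\sup_X|g|<\epsilon$. Hence $B(\underline{0},D,\epsilon/2)$ is contained in the uniform ball of radius $\epsilon$. So $C_u(X)\subseteq C_{\aleph_\alpha}(X)$, and the reverse inclusion is already known.

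For necessity, suppose $C_{\aleph_\alpha}(X)=C_u(X)$. Then the uniform ball of radius $1$ about $\underline{0}$ contains some $B(\underline{0},A,\epsilon)$ with $|A|<\aleph_\alpha$. I claim $A$ is dense, and I argue by contradiction. If it is not, there is $x_0\notin\overline{A}$. Since $X$ is Tychonoff, by Theorem \ref{tychonoff calpha} there is $h\in C(X)$ with $h(x_0)=2$ and $h\equiv0$ on $\overline{A}$. This $h$ lies in $B(\underline{0},A,\epsilon)$ but not in the uniform unit ball, a contradiction. Hence $d(X)\leq|A|+\aleph_0$.

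The main obstacle is the cardinal bookkeeping at this last step. If $A$ is finite we need $\aleph_0<\aleph_\alpha$, or else handle $\aleph_\alpha=\aleph_0$ separately: then $X$ would equal a finite closed set, so $X$ is finite, and we must read $d(X)<\aleph_\alpha$ with the convention that finite spaces count as separable in this trivial case. For the corollary $\aleph_\alpha=\aleph_1>\aleph_0$, so $|A|+\aleph_0\leq\aleph_0<\aleph_1$ and no subtlety arises.
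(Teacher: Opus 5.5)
Your proof is correct and is essentially the paper's route: the corollary is Theorem \ref{qu} at $\aleph_\alpha=\aleph_1$, your sufficiency inclusion $B(\underline{0},D,\epsilon/2)\subseteq B_u(\underline{0},\epsilon)$ is exactly the paper's $(1)\Rightarrow(2)$ in Theorem \ref{metrizability}, and your one-neighbourhood separation argument for necessity fills in the direction the paper calls ``immediate'' (it is a more direct version of the paper's $(5)\Rightarrow(1)$). One small correction: the function $h$ comes from complete regularity of $X$, which is the paper's standing Tychonoff assumption, not from Theorem \ref{tychonoff calpha}; that theorem only transfers the function space and says nothing about separating points from closed sets in $X$.
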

\begin{corollary}\label{qm}
	$C_{\aleph_\alpha}(X)=C_m(X)$ if and only if $X$ is a $\aleph_\alpha$-separable pseudocompact space, here $C_m(X)$ stands for $C(X)$ equipped with the $m$-topology\cite{GJ 1960}.
\end{corollary}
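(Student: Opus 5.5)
Since $C_u(X)\subseteq C_m(X)$ always, and by Theorem \ref{qu} the equality $C_{\aleph_\alpha}(X)=C_u(X)$ holds exactly when $X$ is $\aleph_\alpha$-separable, the natural plan is to combine Theorem \ref{qu} with the classical fact that $C_u(X)=C_m(X)$ if and only if $X$ is pseudocompact \cite{GJ 1960}. I would prove each direction separately, using the chain $C_{\aleph_\alpha}(X)\subseteq C_u(X)\subseteq C_m(X)$ of topologies on $C(X)$.

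For sufficiency, assume $X$ is $\aleph_\alpha$-separable and pseudocompact. Theorem \ref{qu} gives $C_{\aleph_\alpha}(X)=C_u(X)$. Pseudocompactness gives $C_u(X)=C_m(X)$, and I would verify this directly. A basic $m$-neighbourhood of $f$ is $\{g:|f-g|<u\}$ for a positive unit $u\in C(X)$. Since $1/u$ is bounded on a pseudocompact space, $u\geq\epsilon$ for some $\epsilon>0$. Hence the uniform ball of radius $\epsilon$ about $f$ lies inside this $m$-neighbourhood.

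For necessity, assume $C_{\aleph_\alpha}(X)=C_m(X)$. Because $C_{\aleph_\alpha}(X)\subseteq C_u(X)\subseteq C_m(X)$, all three topologies coincide. In particular $C_{\aleph_\alpha}(X)=C_u(X)$, so $X$ is $\aleph_\alpha$-separable by Theorem \ref{qu}. Also $C_u(X)=C_m(X)$, and I would deduce pseudocompactness by contradiction. If $X$ is not pseudocompact, there is an unbounded $h\in C(X)$ with $h\geq 1$; set $u=1/h$. Then $\{g:|g|<u\}$ is an $m$-neighbourhood of $\underline{0}$. It contains no uniform ball $B(\underline{0},X,\epsilon)$, because the constant $\epsilon/2$ fails $|g|<u$ at any point where $h>2/\epsilon$.

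I expect no step to be a real obstacle, since everything reduces to Theorem \ref{qu} plus the standard $m$-topology fact. The only point needing care is the chain of inclusions used in the necessity direction, which I would justify by noting that basic $\aleph_\alpha$-neighbourhoods are unions of uniform balls, and uniform balls are $m$-neighbourhoods via constant units.
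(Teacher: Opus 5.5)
Your proof is correct and is the argument the paper intends. The corollary is stated without proof right after Theorem \ref{qu}, and it follows by combining that theorem with the Gillman--Jerison fact that $C_u(X)=C_m(X)$ exactly when $X$ is pseudocompact; you verify that fact directly along the chain $C_{\aleph_\alpha}(X)\subseteq C_u(X)\subseteq C_m(X)$.

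The only caveat comes from Theorem \ref{qu}, not from your argument: it tacitly needs $\aleph_\alpha\geq\aleph_1$. For $\aleph_\alpha=\aleph_0$ and finite $X$ one has $C_p(X)=C_m(X)$, although $d(X)=\aleph_0\not<\aleph_0$.
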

\begin{example}
We take any countably infinite number of objects $\{a_1,a_2,\cdots\}$ outside the closed ordinal space $[0,\omega_{\alpha+2}]$. Let $X$ =$[0,\omega_{\alpha+2}]\oplus\oplus_{i=1}^{\infty}\{a_i\}$. It is easy to check that $B(\underline{0},[0,\omega_{\alpha+2}],1)$ is an open set in $C_k(X)$ which is not open in $C_{\aleph_\alpha}(X)$. On the other hand, $B(\underline{0},\{a_1,a_2,\cdots\},1)$ is open in $C_{\aleph_\alpha}(X)$ which is not open in $C_k(X)$. Thus in general, $C_k(X)$ and $C_{\aleph_\alpha}(X)$ are incomparable with $\aleph_\alpha\geq \aleph_1$.
\end{example}

	\begin{example}
	 \begin{enumerate}
			\item Let $X^*$ be the one point compactification a discrete space $X$ such that $|X|>\aleph_2$. Then $X^*$ is compact but not $\aleph_1$-separable and hence, $$ C_p(X^*)\subsetneq C_s(X^*)\subsetneq C_{\aleph_2}(X^*)\subsetneq C_k(X^*)=C_{ps}(X^*)=C_u(X^*)=C_m(X^*).$$

			\item Since $\mathbb{N}$ is a non pseudocompact separable space hence, we have $$C_p(\mathbb{N})=C_{ps}(\mathbb{N})=C_k(\mathbb{N})\subsetneq C_s(\mathbb{N})=C_L(\mathbb{N})=C_{\aleph_2}(X)=\cdots=C_u(\mathbb{N})\subsetneq C_m(\mathbb{N}).$$
			
			\item Let $X=[0,\omega_1)$. Then $X$ is countably compact but not compact. Also, $X$ is not separable \cite{SS COUNTEREXAMPLE1995}. Therefore, we have $$C_p(X)\subsetneq C_s(X)\subsetneq C_k(X)\subsetneq C_{ps}(X)=C_{\aleph_2}(X)=C_u(X)=C_m(X).$$
			
			\item Consider the space $\Psi=X$ described in \cite[5I]{GJ 1960}. This space is separable, pseudocompact but not countably compact. Then we have $$C_k(X)\subsetneq C_{ps}=C_s(X)=C_{\aleph_2}(X)=\cdots =C_u(X)=C_m(X).$$
		
		\end{enumerate}
	\end{example}

	\section{Connected and compact subset of $C_{\aleph_\alpha}(X)$}
\subsection{Connectedness of $C_{\aleph_\alpha}(X)$}

One of the main results in this section tells that, there is no distinction between the component, the quasicomponent and the path component of any point in $C_{\aleph_\alpha}(X)$. The following fact is helpful to us, towards achieving this result.

\begin{notation}
	For any subset $A$ of $X$, let $A^*(X)=\{f\in C(X):f \text{ is bounded on }A\}$. 
\end{notation}

It is easy to prove that for any $A\in \mathcal{A}_{\aleph_\alpha}$, $A^*(X)$ is a clopen subset of $C_{\aleph_\beta}(X)$ for any cardinal number $\aleph_\beta\geq \aleph_\alpha$ and this is because for any $f\in A^*(X)$, $B(f,A,1)\subseteq A^*(X)$ and for any $g\notin A^*(X)$, $B(g,A,1)\cap A^*(X)=\emptyset$. It is trivial that $C^*(X)\subset A^*(X)$ for any $A\in \mathcal{A}_{\aleph_\alpha}$. On the other hand if $f\in C(X)\setminus C^*(X)$, then there exists a countably infinite subset $D$ of $X$ such that $f$ is unbounded on $D$ and hence $f\notin D^*(X)$. This leads to the following remarks.
\begin{remark}\label{ C^*(X) open C_s}
	For a $\aleph_\alpha$-separable space $X$, $C^*(X)$ is open in $C_{\aleph_\alpha}(X)$.
\end{remark} 
\begin{remark}
	$C^*(X)$ is closed in $C_{\aleph_\alpha}(X)$ for  each $\aleph_\alpha\geq \aleph_1$. Hence, $C^*(X)$ is dense in $C_{\aleph_\alpha}(X)$ if and only if $X$ is pseudocompact.
\end{remark}
\begin{remarks}\label{C^*(X) representation }
	\begin{enumerate}
		\item 	$C^*(X)=\bigcap\limits_{A\in \mathcal{A}_{\aleph_\alpha\text{ and }\aleph_\alpha\geq \aleph_1}}A^*(X)$.
		\item 	$C(X)=\bigcap\limits_{A\in \mathcal{A}_{\aleph_0}}A^*(X)$.
	\end{enumerate}
\end{remarks}
\begin{theorem}
	$\bigcap\limits_{A\in \mathcal{A}_{\aleph_\alpha}}A^*(X)$ is pathwise connected in $C_{\aleph_\alpha}(X)$.
\end{theorem}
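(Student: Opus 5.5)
Write $E=\bigcap_{A\in\mathcal{A}_{\aleph_\alpha}}A^*(X)$; this is the set of $f\in C(X)$ that are bounded on every $A\subseteq X$ with $|A|<\aleph_\alpha$. The plan is to join an arbitrary $f\in E$ to $\underline{0}$ by the straight-line path $\gamma:[0,1]\to C_{\aleph_\alpha}(X)$, $\gamma(t)=tf$. Since any two points of $E$ can then be joined through $\underline{0}$, this gives pathwise connectedness, provided two things hold: $\gamma$ is continuous into $C_{\aleph_\alpha}(X)$, and $\gamma([0,1])\subseteq E$.

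The containment is immediate. If $f$ is bounded on $A$, say by $M_A$, then $tf$ is bounded on $A$ by $M_A$ for every $t\in[0,1]$. Hence $tf\in A^*(X)$ for every $A\in\mathcal{A}_{\aleph_\alpha}$, that is, $tf\in E$.

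For continuity, fix $t_0\in[0,1]$ and a basic neighbourhood $B(t_0f,A,\epsilon)$ with $A\in\mathcal{A}_{\aleph_\alpha}$. Because $f\in E$, we have $M_A=\sup_{x\in A}|f(x)|<\infty$. Then
\[
\sup_{x\in A}|tf(x)-t_0f(x)|\le |t-t_0|\,M_A .
\]
We need this strictly below $\epsilon$, so choose $\delta=\epsilon/(M_A+1)$. Whenever $|t-t_0|<\delta$, the bound is at most $\delta M_A<\epsilon$, and so $\gamma(t)\in B(t_0f,A,\epsilon)$. Since these sets form a base at $t_0f$, $\gamma$ is continuous at $t_0$.

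The only point needing care is the role of the hypothesis. Continuity of $\gamma$ at $t_0$ needs exactly that $f$ be bounded on each $A$ in $\mathcal{A}_{\aleph_\alpha}$. This is precisely membership in $E$, and it is why the theorem is stated for this intersection rather than for all of $C(X)$. By Remarks \ref{C^*(X) representation }, the result specialises to pathwise connectedness of $C^*(X)$ when $\aleph_\alpha\ge\aleph_1$, and of $C(X)=C_p(X)$ when $\aleph_\alpha=\aleph_0$.
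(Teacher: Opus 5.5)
Your proof is correct and takes essentially the same approach as the paper. The paper also uses the straight-line path $p(t)=th$ from $\underline{0}$ to $h$, and gets continuity at each $t$ from a bound $M$ for $|h|$ on the given $A$, with $\delta=\frac{\epsilon}{4M}$ in place of your $\delta=\epsilon/(M_A+1)$.
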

\begin{proof}
	We shall show that any arbitrary $h\in \bigcap\limits_{A\in \mathcal{A}_{\aleph_\alpha}}A^*(X)$ is pathwise connected to $\underline{0}$ in $C_{\aleph_\alpha}(X)$. For that purpose, we define the map $p:[0,1]\rightarrow \bigcap\limits_{A\in \mathcal{A}_{\aleph_\alpha}}A^*(X)$ by the rule: $p(t)=th$ for all $t\in [0,1]$.  Once we prove that $p$ is a continuous map, it is clear that $p$ is a path joining $\underline{0}$ and $h$ in $ \bigcap\limits_{A\in \mathcal{A}_{\aleph_\alpha}}A^*(X)$ because $p([0,1])\subseteq \bigcap\limits_{A\in \mathcal{A}_{\aleph_\alpha}}A^*(X)$. Let $B(p(t),A,\epsilon)$ be a basic open neighborhood of the point $p(t)$ in the space $C_{\aleph_\alpha}(X)$, here $A\in \mathcal{A}_{\aleph_\alpha}$, $\epsilon>0$ and $t\in [0,1]$. Now we can write, $|h(x)|\leq M$ for each $x\in A$, for some $M>0$. Let $\delta=\frac{\epsilon}{4M}$, then for each $t'\in [0,1]$ with $|t-t'|<\delta$ and for each $x\in A$, we have $|p(t)(x)-p(t')(x)|<\frac{\epsilon}{2}$, this implies that $\sup\limits_{x\in A}|p(t)(x)-p(t')(x)|<\epsilon$. This proves the continuity of $p$ at the point $t$.
\end{proof}
\begin{corollary}\label{C^*(X) is component C_s}
	$C^*(X)$ is pathwise connected in the space $C_{\aleph_\alpha}(X)$ for all $\aleph_\alpha\geq \aleph_1$.
\end{corollary}

\begin{corollary}
	$C(X)$ is pathwise connected and hence, connected in $C_p(X)$.
\end{corollary}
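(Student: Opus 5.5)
The plan is to deduce the statement directly from the preceding theorem by specializing to $\aleph_\alpha=\aleph_0$. The key observation is that $\mathcal{A}_{\aleph_0}$ consists of the finite subsets of $X$. Every continuous real-valued function is bounded on any finite set, so $A^*(X)=C(X)$ for each $A\in\mathcal{A}_{\aleph_0}$. This is item (2) of Remarks \ref{C^*(X) representation }, and it gives
\[
\bigcap_{A\in\mathcal{A}_{\aleph_0}}A^*(X)=C(X).
\]

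Next I use the identification $C_{\aleph_0}(X)=C_p(X)$ noted in Section 2. The basic sets $B(f,A,\epsilon)$ with $A$ finite are exactly the standard basic neighbourhoods of the point-open topology. Applying the preceding theorem with $\aleph_\alpha=\aleph_0$ then shows that $C(X)$ is pathwise connected in $C_p(X)$.

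For completeness I would also recall why the path $t\mapsto th$ is continuous in this case. For finite $A$ the bound $M=\max_{x\in A}|h(x)|$ exists automatically, so the estimate in the theorem goes through without any hypothesis on $h$. If $M=0$ the path is constant on $A$, and any $\delta$ works.

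Finally, a pathwise connected space is connected, since any two points lie in the connected image of $[0,1]$. This gives the connectedness of $C(X)$ in $C_p(X)$. I expect no real obstacle here; the only point needing care is confirming that the theorem applies at the index $\aleph_0$, since it was stated for a general transfinite cardinal.
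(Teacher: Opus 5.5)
Your proposal is correct and follows the paper's route. The paper obtains this corollary by specializing the theorem on the path-connectedness of $\bigcap_{A\in\mathcal{A}_{\aleph_\alpha}}A^*(X)$ to $\aleph_\alpha=\aleph_0$, then using Remark (2), $C(X)=\bigcap_{A\in\mathcal{A}_{\aleph_0}}A^*(X)$, and the identification $C_{\aleph_0}(X)=C_p(X)$.
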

\begin{corollary}\label{Clopen contained in C*}
	There does not exist any clopen subset $A$ of $C_{\aleph_\alpha}(X)$ contained properly in $C^*(X)$.
\end{corollary}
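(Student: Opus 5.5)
Suppose, towards a contradiction, that $\mathcal{U}$ is a clopen subset of $C_{\aleph_\alpha}(X)$ with $\emptyset\neq\mathcal{U}\subsetneq C^*(X)$. (The empty set is trivially clopen, so ``properly'' must be read as: non-empty and different from $C^*(X)$; that is the version I will prove.) The idea is to play off Corollary \ref{C^*(X) is component C_s}, which says $C^*(X)$ is connected, against the clopenness of $\mathcal{U}$.

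First I split off the case $\aleph_\alpha=\aleph_0$. Then $C_{\aleph_0}(X)=C_p(X)$, and by the preceding corollary $C(X)$ itself is connected in $C_p(X)$. Its only clopen sets are $\emptyset$ and $C(X)$. Since $C(X)\not\subsetneq C^*(X)$, there is nothing to prove in this case.

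Now take $\aleph_\alpha\geq\aleph_1$. Since $\mathcal{U}$ is clopen in $C_{\aleph_\alpha}(X)$, the relative topology gives that $\mathcal{U}=\mathcal{U}\cap C^*(X)$ is clopen in the subspace $C^*(X)$. By Corollary \ref{C^*(X) is component C_s}, $C^*(X)$ is pathwise connected, hence connected. So its only clopen subsets are $\emptyset$ and $C^*(X)$. This forces $\mathcal{U}=\emptyset$ or $\mathcal{U}=C^*(X)$, contradicting $\emptyset\neq\mathcal{U}\subsetneq C^*(X)$.

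There is no real obstacle here. The only points needing care are the two just handled: fixing the meaning of ``properly'' so that $\emptyset$ is excluded, and treating $\aleph_0$ separately because Corollary \ref{C^*(X) is component C_s} assumes $\aleph_\alpha\geq\aleph_1$.
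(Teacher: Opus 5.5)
Your argument is correct and follows the paper's intended reasoning. A nonempty clopen $\mathcal{U}\subsetneq C^*(X)$ would be a nontrivial clopen subset of the subspace $C^*(X)$, which contradicts the connectedness of $C^*(X)$ established just before the corollary. Your exclusion of $\emptyset$ and your separate treatment of $\aleph_\alpha=\aleph_0$ via the connectedness of $C_p(X)$ make explicit what the paper leaves implicit. Alternatively, the path $t\mapsto th$ stays inside $C^*(X)$ for every $\aleph_\alpha$, so no case split is needed.
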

\begin{corollary}\label{pseudocompact pointopen}
	$C^*(X)$ is clopen in $C_p(X)=C_{\aleph_0}(X)$ if and only if $X$ is pseudocompact.
\end{corollary}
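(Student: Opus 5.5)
This is the case $\aleph_\alpha=\aleph_0$ of the preceding discussion. In $C_p(X)=C_{\aleph_0}(X)$ the only sets in $\mathcal{A}_{\aleph_0}$ are finite, and every continuous function is bounded on a finite set. So Remarks \ref{C^*(X) representation }(2) shows that the clopen sets $A^*(X)$ are all of $C(X)$ and say nothing about $C^*(X)$. By the previous Corollary, $C(X)$ is connected in $C_p(X)$. Hence a nonempty subset of $C_p(X)$ is clopen if and only if it equals $C(X)$. Since $\underline{0}\in C^*(X)$, the set $C^*(X)$ is clopen in $C_p(X)$ exactly when $C^*(X)=C(X)$, which is the definition of pseudocompactness of $X$. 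This settles both directions at once.

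For the forward direction in detail: suppose $C^*(X)$ is clopen in $C_p(X)$. It is nonempty because it contains $\underline{0}$, so connectedness of $C_p(X)$ (the preceding Corollary) forces $C^*(X)=C(X)$. Thus every continuous real-valued function on $X$ is bounded, i.e. $X$ is pseudocompact. Conversely, if $X$ is pseudocompact then $C^*(X)=C(X)$, which is trivially clopen in $C_p(X)$.

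No step is a real obstacle. The only point needing care is the citation: connectedness of $C(X)$ in $C_p(X)$ comes from the pathwise-connectedness theorem with $\aleph_\alpha=\aleph_0$, where $\bigcap_{A\in\mathcal{A}_{\aleph_0}}A^*(X)=C(X)$. The bound $M$ used in that proof exists because $A$ is finite.
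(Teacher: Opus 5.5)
Your proof is correct and is the argument the paper intends. The corollary is placed immediately after the observation that $C(X)=\bigcap_{A\in\mathcal{A}_{\aleph_0}}A^*(X)$ is pathwise connected, hence connected, in $C_p(X)$; so the nonempty set $C^*(X)\ni\underline{0}$ is clopen exactly when $C^*(X)=C(X)$, i.e.\ exactly when $X$ is pseudocompact.
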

 Corollary \ref{pseudocompact pointopen} may not hold good with $\aleph_\alpha\geq \aleph_1$. 
 \begin{example}
 	Consider $\mathbb{R}$ with the usual topology. Then $C^*(\mathbb{R})=\mathbb{Q}^*(\mathbb{R})$ is clopen in $C_s(\mathbb{R})=C_{\aleph_1}(X)$. But, $\mathbb{R}$ is not pseudocompact.
 \end{example}
\begin{theorem}\label{ disconnection C_s}
	$C^*(X)$ is a maximal connected subset in the space $C_{\aleph_\alpha}(X)$ for all $\aleph_\alpha\geq \aleph_1$.
\end{theorem}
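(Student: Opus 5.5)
By Corollary \ref{C^*(X) is component C_s}, $C^*(X)$ is pathwise connected, hence connected, in $C_{\aleph_\alpha}(X)$ for $\aleph_\alpha\geq\aleph_1$. It remains to prove maximality. We show that any subset $S$ of $C_{\aleph_\alpha}(X)$ with $C^*(X)\subsetneq S$ is disconnected. To do this we separate a point of $S\setminus C^*(X)$ from $C^*(X)$ by a clopen set of the form $A^*(X)$.

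Fix $f\in S\setminus C^*(X)$. As observed just before Remark \ref{ C^*(X) open C_s}, there is a countably infinite set $D\subseteq X$ on which $f$ is unbounded. Then $f\notin D^*(X)$. Since $|D|=\aleph_0<\aleph_1\leq\aleph_\alpha$, we have $D\in\mathcal{A}_{\aleph_1}$. By the remark preceding Remark \ref{ C^*(X) open C_s}, $D^*(X)$ is therefore clopen in $C_{\aleph_\beta}(X)$ for every $\aleph_\beta\geq\aleph_1$, and in particular in $C_{\aleph_\alpha}(X)$. The key point is that $D$ has cardinality below $\aleph_1$ regardless of how large $\aleph_\alpha$ is, so the clopen set we need exists uniformly for all $\aleph_\alpha\geq\aleph_1$.

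Now set $U=S\cap D^*(X)$ and $V=S\setminus D^*(X)$. Both sets are relatively open in $S$, they are disjoint, and their union is $S$. The set $U$ contains $C^*(X)\subseteq D^*(X)$, so it is nonempty, and $V$ contains $f$, so it is nonempty as well. Hence $S$ is disconnected. This proves that $C^*(X)$ is a maximal connected subset, that is, the component of $\underline{0}$.

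There is no real obstacle in this argument. The only point needing care is the existence of the countable set $D$ on which an unbounded $f$ is unbounded: choose $x_n\in X$ with $|f(x_n)|>n$ for each $n$, and take $D=\{x_n:n\in\mathbb{N}\}$.
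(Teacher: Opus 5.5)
Your proposal is correct and matches the paper's proof: both pick $f\in S\setminus C^*(X)$ and a countably infinite set $\{x_n\}$ with $|f(x_n)|>n$, then split $S$ along the clopen set $A^*(X)$. You also supply the details the paper leaves as ``easy to check'': that $A\in\mathcal{A}_{\aleph_\alpha}$ because $\aleph_0<\aleph_1\leq\aleph_\alpha$, and that both pieces are nonempty.
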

\begin{proof}
	From Corollary \ref{C^*(X) is component C_s}, $C^*(X)$ is a connected subset in the space $C_{\aleph_\alpha}(X)$. Let $S$ be a subset of $C(X)$ such that $C^*(X)\subsetneq S$. Let us choose, $f\in S\setminus C^*(X)$ and $A=\{x_1,x_2,\cdots\}$ be a countably infinite subset of $X$ such that $|f(x_n)|>n$ for each $n\in \mathbb{N}$. It is easy to check that $S=(S\cap A^*(X))\cup (S\cap (C(X)\setminus A^*(X))$ is a disconnection of $S$ in the space $C_{\aleph_\alpha}(X)$.
\end{proof}
\begin{theorem} \label{path,quasi Calpha}
	The path component $P$, the component $C$ and the quasicomponent $Q$ of the function $\underline{0}$ in the space $C_{\aleph_\alpha}(X)$ are all identical to $C^*(X)$ for any $\aleph_\alpha\geq \aleph_1$.
	
\end{theorem}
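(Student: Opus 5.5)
The plan is to use the general chain $P\subseteq C\subseteq Q$, which holds in every topological space, and then close it with $C^*(X)\subseteq P$ and $Q\subseteq C^*(X)$.

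\emph{Step 1: $C^*(X)\subseteq P$.} By Corollary \ref{C^*(X) is component C_s}, $C^*(X)$ is pathwise connected in $C_{\aleph_\alpha}(X)$ for $\aleph_\alpha\geq\aleph_1$. It contains $\underline{0}$. Hence every $h\in C^*(X)$ is joined to $\underline{0}$ by a path lying in $C^*(X)$; explicitly, $t\mapsto th$ works. Therefore $C^*(X)\subseteq P$.

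\emph{Step 2: $P\subseteq C\subseteq Q$.} A path component is connected, so it lies in the component. A component lies in every clopen set containing the point, and the quasicomponent is by definition the intersection of all clopen sets containing $\underline{0}$. So $C\subseteq Q$.

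\emph{Step 3: $Q\subseteq C^*(X)$.} This is the only step needing a specific construction. I would show that $C^*(X)$ is an intersection of clopen sets containing $\underline{0}$.

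Let $f\notin C^*(X)$. Choose a countably infinite set $A=\{x_1,x_2,\dots\}\subseteq X$ with $|f(x_n)|>n$. Since $\aleph_\alpha\geq\aleph_1$, we have $|A|=\aleph_0<\aleph_\alpha$, so $A\in\mathcal{A}_{\aleph_\alpha}$. As observed before Remark \ref{ C^*(X) open C_s}, $A^*(X)$ is clopen in $C_{\aleph_\alpha}(X)$. It contains $\underline{0}$, and it does not contain $f$. Thus $f\notin Q$.

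Equivalently, by Remarks \ref{C^*(X) representation }(1), $C^*(X)=\bigcap_{A\in\mathcal{A}_{\aleph_\alpha}}A^*(X)$ is an intersection of clopen sets each containing $\underline{0}$. Hence $Q\subseteq C^*(X)$.

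Combining the three steps gives $C^*(X)\subseteq P\subseteq C\subseteq Q\subseteq C^*(X)$, so all of them equal $C^*(X)$. Theorem \ref{ disconnection C_s} also gives $C=C^*(X)$ directly, but it is not needed.

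The main obstacle is Step 3: one must check that the witnessing set $A$ has cardinality below $\aleph_\alpha$, which is exactly where $\aleph_\alpha\geq\aleph_1$ is used, and verify clopenness of $A^*(X)$. Clopenness comes from the balls $B(g,A,1)$: if $g\in A^*(X)$ then $B(g,A,1)\subseteq A^*(X)$, and if $g\notin A^*(X)$ then $B(g,A,1)$ misses $A^*(X)$.
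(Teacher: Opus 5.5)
Your proposal is correct and is essentially the paper's proof. Both arguments close the chain $C^*(X)\subseteq P\subseteq C\subseteq Q\subseteq C^*(X)$: path-connectedness of $C^*(X)$ gives the first inclusion, and the representation $C^*(X)=\bigcap_{A\in\mathcal{A}_{\aleph_\alpha}}A^*(X)$ as an intersection of clopen sets containing $\underline{0}$ gives the last. You just make explicit the separating clopen set $A^*(X)$ for each unbounded $f$, which the paper leaves implicit when it cites that representation.
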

\begin{proof}
	It follows from Corollary \ref{C^*(X) is component C_s} that $C^*(X)\subseteq P$. It is well known that $P\subseteq C\subseteq Q$. But $Q=$ the intersection of all clopen subset of $C_{\aleph_\alpha}(X)$ containing $\underline{0}$. Hence, from Remark \ref{C^*(X) representation }, $Q\subseteq C^*(X)$ and consequently, $P=C=Q=C^*(X)$.

\end{proof}
\begin{corollary}
		The path component $P$, the component $C$ and the quasicomponent $Q$ of the function ${f}$ in the space $C_{\aleph_\alpha}(X)$ are all identical to $f+C^*(X)$ for any $\aleph_\alpha\geq \aleph_1$.
	
\end{corollary}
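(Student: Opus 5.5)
The plan is to transport Theorem \ref{path,quasi Calpha} from $\underline{0}$ to an arbitrary $f$ by a self-homeomorphism of $C_{\aleph_\alpha}(X)$. Since $C_{\aleph_\alpha}(X)$ is a topological group under addition, the translation map $T_f:C(X)\rightarrow C(X)$, $T_f(g)=f+g$, is a homeomorphism of $C_{\aleph_\alpha}(X)$ onto itself. Its inverse is $T_{-f}$. Continuity can also be read off the base directly: $T_f(B(g,A,\epsilon))=B(f+g,A,\epsilon)$, because $\sup_{x\in A}|(f+h)(x)-(f+g)(x)|=\sup_{x\in A}|h(x)-g(x)|$. Hence $T_f$ maps basic open sets onto basic open sets, and so does its inverse.

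Next I would use that path components, components and quasicomponents are topological invariants. A homeomorphism $T$ sends paths to paths, connected sets to connected sets, and clopen sets to clopen sets, and $T^{-1}$ does the same in reverse. So $T$ carries the path component, the component and the quasicomponent of a point $g$ exactly onto those of $T(g)$. In particular, for the quasicomponent, $T$ carries the intersection of all clopen sets containing $g$ to the intersection of all clopen sets containing $T(g)$.

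Applying this with $T=T_f$ and $g=\underline{0}$, together with Theorem \ref{path,quasi Calpha}, gives
\[ P_f=T_f(P_{\underline{0}})=f+C^*(X), \]
and likewise $C_f=f+C^*(X)$ and $Q_f=f+C^*(X)$. This holds for every $\aleph_\alpha\geq\aleph_1$.

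I do not expect a genuine obstacle here. The only point needing care is verifying that translation is a homeomorphism, and the base computation above settles that immediately.
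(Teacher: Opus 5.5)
Your proposal is correct and matches the paper's approach. The paper gives no separate proof: it treats the corollary as immediate from Theorem \ref{path,quasi Calpha}, since $C_{\aleph_\alpha}(X)$ is a topological group and the translation $g\mapsto f+g$ is a homeomorphism carrying the path component, component and quasicomponent of $\underline{0}$ onto those of $f$. Your check that $T_f(B(g,A,\epsilon))=B(f+g,A,\epsilon)$ supplies the one detail the paper leaves implicit.
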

For any space $X$, $|C^*(X)|\geq\mathfrak{c}$, which leads to the following remark.
\begin{remark}
	$C_{\aleph_\alpha}(X)$ is never totally disconnected and hence, never extremally disconnected.
\end{remark}

\begin{theorem}
	The following statements are equivalent for a space $X$ and a cardinal number $\aleph_\alpha\geq \aleph_1$:
	\begin{enumerate}
		\item $X$ is pseudocompact.
		\item $C_{\aleph_\alpha}(X)$ is pathwise connected.
		\item $C_{\aleph_\alpha}(X)$ is connected.
		\item $C_{\aleph_\alpha}(X)$ is a topological ring.
	\end{enumerate}
\end{theorem}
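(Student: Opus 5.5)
I will prove the cycle $(1)\Rightarrow(2)\Rightarrow(3)\Rightarrow(1)$ and then separately prove $(1)\Leftrightarrow(4)$. Throughout I work with Tychonoff $X$, which Theorem \ref{tychonoff calpha} permits.

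For $(1)\Rightarrow(2)$: if $X$ is pseudocompact then $C(X)=C^*(X)$, and Corollary \ref{C^*(X) is component C_s} gives that $C^*(X)$ is pathwise connected in $C_{\aleph_\alpha}(X)$ whenever $\aleph_\alpha\geq\aleph_1$. The implication $(2)\Rightarrow(3)$ is general topology. For $(3)\Rightarrow(1)$, Theorem \ref{path,quasi Calpha} says the component of $\underline{0}$ is $C^*(X)$. If $C_{\aleph_\alpha}(X)$ is connected, this component is all of $C(X)$, so $C(X)=C^*(X)$ and $X$ is pseudocompact. Alternatively, if $X$ is not pseudocompact, pick an unbounded $f$ and a countable set $A$ on which $f$ is unbounded. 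Then $A\in\mathcal{A}_{\aleph_\alpha}$ because $\aleph_\alpha\geq\aleph_1$, and $A^*(X)$ is a proper nonempty clopen set.

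For $(1)\Rightarrow(4)$: $C_{\aleph_\alpha}(X)$ is already a topological group, so only continuity of multiplication at $(f,g)$ must be checked. Fix $A\in\mathcal{A}_{\aleph_\alpha}$ and $\epsilon>0$. Since $X$ is pseudocompact, $|f|,|g|\leq M$ on all of $X$, and in particular on $A$. For $f'\in B(f,A,\delta)$ and $g'\in B(g,A,\delta)$ I use the identity $f'g'-fg=(f'-f)(g'-g)+f(g'-g)+g(f'-f)$. This bounds $\sup_A|f'g'-fg|$ by $\delta^2+2M\delta$, which is less than $\epsilon$ for small $\delta$. This direction actually only needs boundedness on $A$.

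For $(4)\Rightarrow(1)$, the main point: suppose $X$ is not pseudocompact and take $h\in C(X)\setminus C^*(X)$. I claim scalar multiplication $t\mapsto t h$, with $t$ running over the constant functions, is discontinuous at $t=0$, which contradicts $C_{\aleph_\alpha}(X)$ being a topological ring. Choose a countable $A=\{x_n\}$ with $|h(x_n)|>n$. Then $A\in\mathcal{A}_{\aleph_\alpha}$. The constants $\underline{1/k}\to\underline{0}$ in $C_{\aleph_\alpha}(X)$, since $\sup_X|1/k|\to 0$. However, $\sup_A|h/k|=\infty$ for every $k$, so $\underline{1/k}\cdot h\notin B(\underline{0},A,1)$ even though $\underline{0}\cdot h=\underline{0}$. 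Hence multiplication is not continuous at $(\underline{0},h)$.

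The only subtlety I expect is making sure $A$ is admissible, that is $|A|<\aleph_\alpha$. This is exactly where the hypothesis $\aleph_\alpha\geq\aleph_1$ is used, and it explains why $C_p(X)$ is excluded.
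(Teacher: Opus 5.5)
Your proof is correct and follows essentially the same route as the paper. The matching steps are: pathwise connectedness of $C^*(X)$ for $(1)\Rightarrow(2)$; the clopen sets $A^*(X)$ with $A$ countable (equivalently, maximality of $C^*(X)$ as a connected set) for $(3)\Rightarrow(1)$; the same $\delta$-estimate for $(1)\Rightarrow(4)$; and, for $(4)\Rightarrow(1)$, the same observation that continuity of multiplication at $(\underline{0},f)$ forces a small constant times $f$ into $B(\underline{0},A,1)$. The only cosmetic difference is that you argue the last step by contradiction with the sequence $\underline{1/k}\to\underline{0}$, whereas the paper shows directly that $f$ is bounded on every $A\in\mathcal{A}_{\aleph_\alpha}$ and then uses $C^*(X)=\bigcap_{A}A^*(X)$.
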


\begin{proof}
	Let $X$ be pseudocompact, i.e., $C(X)=C^*(X)$, then it follows from Corollary \ref{C^*(X) is component C_s} that $C_{\aleph_\alpha}(X)$ is pathwise connected and hence, connected. 
	
	Furthermore, the hypothesis that $X$ is pseudocompact implies in a routine manner that $C_{\aleph_\alpha}(X)$ is a topological ring because if $f,g\in C(X)=C^*(X)$, $\epsilon>0$ and $A\in \mathcal{A}_{\aleph_\alpha}$ then $$B(f,A,\delta)\cdot B(g,A,\delta)\subseteq B(f\cdot g,A,\epsilon)$$ if we choose $\delta=\min \{\frac{\epsilon}{4(M+1)},1\}$, where $\sup\limits_{x\in X}|f(x)|<M$ and $\sup\limits_{x\in X}|g(x)|<M$. On the other hand if $X$ is not pseudocompact, then from Theorem \ref*{ disconnection C_s}, $C(X)$ is disconnected. This proves $(3)\implies (1)$. To complete this theorem it remains to show that $(4)\implies (1)$: Let $f\in C(X)$ and $A\in \mathcal{A}_{\aleph_\alpha}$. Then there exist $A_1,A_2\in \mathcal{A}_{\aleph_\alpha}$ and $\epsilon>0$ such that $$B(\underline{0},A_1,\epsilon)\cdot B(f,A_2,\epsilon)\subseteq B(\underline{0},A,1)$$ This implies that $|\frac{\epsilon}{2}f|\leq1$ on $A$. Thus, $f$ is bounded on $A$, in other words $f\in A^*(X)$. Since this relation is true for each $A\in \mathcal{A}_{\aleph_\alpha}$, it follows from Remark \ref{C^*(X) representation } that $f\in C^*(X)$. Hence, $X$ is pseudocompact.
\end{proof}
\begin{corollary}
		The following statements are equivalent for a space $X$:
	\begin{enumerate}
		\item $X$ is pseudocompact.
		\item $C_{s}(X)$ is pathwise connected.
		\item $C_{s}(X)$ is connected.
		\item $C_{s}(X)$ is a topological ring.
	\end{enumerate}
\end{corollary}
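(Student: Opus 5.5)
This corollary is the special case $\aleph_\alpha=\aleph_1$ of the preceding theorem, because $C_s(X)=C_{\aleph_1}(X)$ as noted in the introduction. The plan is therefore to check that $\aleph_1$ meets the hypothesis $\aleph_\alpha\geq\aleph_1$ and then quote the theorem. For the reader's convenience, I would also recall how each implication runs in this case.

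For $(1)\Rightarrow(2)$: if $X$ is pseudocompact, then $C(X)=C^*(X)$. Corollary \ref{C^*(X) is component C_s} with $\aleph_\alpha=\aleph_1$ then says $C_s(X)$ is pathwise connected; the path from $\underline{0}$ to $h$ is $t\mapsto th$. The implication $(2)\Rightarrow(3)$ is the standard fact that a pathwise connected space is connected.

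For $(3)\Rightarrow(1)$: if $X$ is not pseudocompact, pick $f\in C(X)\setminus C^*(X)$ and a countable set $A=\{x_n\}$ with $|f(x_n)|>n$. Since $|A|<\aleph_1$, the set $A^*(X)$ is clopen in $C_s(X)$. It contains $\underline{0}$ but not $f$, so it disconnects $C_s(X)$, as in Theorem \ref{ disconnection C_s}.

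For $(1)\Rightarrow(4)$: multiplication is continuous by the bound $B(f,A,\delta)\cdot B(g,A,\delta)\subseteq B(fg,A,\epsilon)$, which uses global bounds on $f$ and $g$. For $(4)\Rightarrow(1)$: continuity of multiplication at $(\underline{0},f)$ forces $f$ to be bounded on every countable $A$. By Remarks \ref{C^*(X) representation } this gives $f\in C^*(X)$.

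No step is a real obstacle, since everything is already proved in the general theorem. The only point to watch is that countable subsets are exactly those in $\mathcal{A}_{\aleph_1}$, so the representation $C^*(X)=\bigcap_{A\in\mathcal{A}_{\aleph_1}}A^*(X)$ applies. This holds because any unbounded continuous function is unbounded on some countable set.
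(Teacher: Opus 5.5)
Your proposal is correct and matches the paper's approach: the corollary is just the case $\aleph_\alpha=\aleph_1$ of the preceding theorem, using $C_s(X)=C_{\aleph_1}(X)$. Your recap of each implication also follows the paper's proof of that theorem: the path $t\mapsto th$, the clopen set $A^*(X)$ for a countable $A$ on which $f$ is unbounded, and the bound via $B(\underline{0},A_1,\epsilon)\cdot B(f,A_2,\epsilon)\subseteq B(\underline{0},A,1)$ together with $C^*(X)=\bigcap_{A\in\mathcal{A}_{\aleph_1}}A^*(X)$.
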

Let $A\in \mathcal{A}_{\aleph_\alpha}$, where $\aleph_\alpha\geq \aleph_1$. Define a relation $\sim_A$ on $C(X)$ as follows: $f\sim_A g$ if and only if $\sup\limits_{x\in A}|f(x)-g(x)|<\infty$ for all $f,g\in C(X)$. It is easy to check that this $\sim_A$ is an equivalence relation on $C(X)$. Let $f\in C(X)$, then $[f]_A$ represents the class of $f$, i.e., $[f]_A=\{g\in C(X):f\sim_A g\}$.
\begin{theorem}\label{equi class}
\begin{enumerate}
	\item 	$[\underline{0}]_A=C(X)$ if and only if $A$ is a bounded subset of $X$, i.e., every $f\in C(X)$ is bounded on $A$.
	\item $[f]_A$ is a clopen subset of $C_{\aleph_\alpha}(X)$, for each $f\in C(X)$.
\end{enumerate}
\end{theorem}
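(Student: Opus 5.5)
Both parts unwind the definition of $\sim_A$; the only tool needed is the basic neighbourhood $B(g,A,1)$, which is available because $A\in\mathcal{A}_{\aleph_\alpha}$.

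For part (1) we argue directly. By definition, $[\underline{0}]_A=\{g\in C(X):\sup_{x\in A}|g(x)|<\infty\}$, which is exactly the set $A^*(X)$ of the earlier Notation. Hence $[\underline{0}]_A=C(X)$ holds precisely when every $g\in C(X)$ is bounded on $A$, that is, when $A$ is a bounded subset of $X$. Both directions are immediate.

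For part (2) we first show that $[f]_A$ is open. Fix $g\in[f]_A$ and consider the basic open set $B(g,A,1)$. If $h\in B(g,A,1)$, then $\sup_{x\in A}|h(x)-g(x)|<1$. The triangle inequality on $A$ gives
\[
\sup_{x\in A}|h(x)-f(x)|\leq \sup_{x\in A}|h(x)-g(x)|+\sup_{x\in A}|g(x)-f(x)|<\infty .
\]
So $h\sim_A f$, and therefore $B(g,A,1)\subseteq[f]_A$.

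Next we show that $[f]_A$ is closed. Its complement is the union of the other equivalence classes $[g]_A$ with $g\not\sim_A f$, because $\sim_A$ partitions $C(X)$. Each such class is open by the previous step, so the union is open and $[f]_A$ is closed. Alternatively, one can check directly that $B(g,A,1)\cap[f]_A=\emptyset$ whenever $g\notin[f]_A$, exactly as in the earlier argument that $A^*(X)$ is clopen.

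As a remark, $[f]_A=f+A^*(X)$, and translation by $f$ is a homeomorphism of the topological group $C_{\aleph_\alpha}(X)$. Part (2) therefore also follows from the clopenness of $A^*(X)$ noted earlier in the paper.

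No step is a real obstacle. The one point to watch is that the open-set argument uses the fact that $B(g,A,1)$ is a genuine basic open set of $C_{\aleph_\alpha}(X)$, and this holds because $|A|<\aleph_\alpha$.
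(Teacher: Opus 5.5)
Your proof is correct. The paper gives no separate proof of this theorem; your argument is the one it sketches earlier for the clopenness of $A^*(X)$ via the neighbourhoods $B(g,A,1)$, and your identity $[\underline{0}]_A=A^*(X)$ (more generally $[f]_A=f+A^*(X)$) makes the connection explicit.
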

\begin{corollary} \label{topologicalsum}
	$C_{\aleph_\alpha}(X)$ is the topological sum of distinct members of the family $\{[f]_A:f\in C(X)\}$.
\end{corollary}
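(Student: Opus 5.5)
The plan is to read Corollary \ref{topologicalsum} off Theorem \ref{equi class} together with the fact that $\sim_A$ is an equivalence relation. Fix $A\in\mathcal{A}_{\aleph_\alpha}$. Being the classes of an equivalence relation, the distinct members of $\{[f]_A: f\in C(X)\}$ are pairwise disjoint and cover $C(X)$. By Theorem \ref{equi class}(2) each class is clopen, in particular open, in $C_{\aleph_\alpha}(X)$. A space partitioned into open sets is the topological sum of those sets: a set $U\subseteq C(X)$ is open if and only if $U\cap[f]_A$ is open in the subspace $[f]_A$ for every class. The forward direction is trivial. For the converse, each $U\cap[f]_A$ is open in an open subspace, hence open in $C(X)$, and $U$ is the union of these sets. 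This is exactly the topology of the disjoint sum $\bigoplus [f]_A$, with each summand carrying the subspace topology.

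For completeness I would also verify Theorem \ref{equi class}(2), since that is where the content lies, even though it may be assumed. Take $g\in[f]_A$ and $h\in B(g,A,1)$. Then $\sup_A|h-f|\le \sup_A|h-g|+\sup_A|g-f|<\infty$, so $B(g,A,1)\subseteq[f]_A$ and the class is open. Its complement is a union of other classes, each open by the same argument, so the class is also closed.

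The only potential obstacle is bookkeeping rather than mathematics. One must make sure that ``distinct members'' means we index by the classes themselves and not by representatives $f$, so that no summand is repeated. One also needs to note that the openness argument uses only $A\in\mathcal{A}_{\aleph_\alpha}$, which makes $B(g,A,1)$ a basic open set. The hypothesis $\aleph_\alpha\ge\aleph_1$ plays no role beyond making the statement nontrivial.
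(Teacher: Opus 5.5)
Your argument is correct and matches the paper. The paper gives no separate proof: the corollary is read off from Theorem \ref{equi class}(2) together with the fact that the $\sim_A$-classes partition $C(X)$. Your check that $B(g,A,1)\subseteq[f]_A$ is the same triangle-inequality argument the paper uses at the start of Section 3 to show $A^*(X)=[\underline{0}]_A$ is clopen.
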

\begin{theorem}
	If $A$ is an  unbounded subset of $X$, then $C_{\aleph_\alpha}(X)$ can be written as the topological sum of at least $\mathfrak{c}$ many subspaces. 
\end{theorem}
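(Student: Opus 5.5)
Here $A$ is tacitly taken in $\mathcal{A}_{\aleph_\alpha}$ (with $\aleph_\alpha\geq\aleph_1$), so that Theorem \ref{equi class} and Corollary \ref{topologicalsum} apply. By Corollary \ref{topologicalsum}, $C_{\aleph_\alpha}(X)$ is the topological sum of the distinct classes $[f]_A$, $f\in C(X)$. So it suffices to produce at least $\mathfrak{c}$ pairwise distinct classes, i.e.\ $\mathfrak{c}$ functions that are pairwise inequivalent under $\sim_A$.

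The plan has three steps.

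\textbf{Step 1: a suitable unbounded function.} Since $A$ is unbounded, Theorem \ref{equi class}(1) gives some $h\in C(X)$ that is unbounded on $A$. Replacing $h$ by $|h|$, which is still continuous and unbounded on $A$, we may assume $h\geq 0$.

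\textbf{Step 2: the family.} For each real $r\in[0,1]$ set $f_r=r\,h$. For $r\neq s$ we have
\[
\sup_{x\in A}|f_r(x)-f_s(x)| = |r-s|\,\sup_{x\in A}h(x)=\infty,
\]
so $f_r\not\sim_A f_s$. Hence the classes $[f_r]_A$, $r\in[0,1]$, are pairwise distinct.

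\textbf{Step 3: conclusion.} Distinct equivalence classes are disjoint, and by Theorem \ref{equi class}(2) each class is clopen. The family $\{[f]_A : f\in C(X)\}$ therefore has at least $|[0,1]|=\mathfrak{c}$ members. By Corollary \ref{topologicalsum}, $C_{\aleph_\alpha}(X)$ is the topological sum of these, at least $\mathfrak{c}$, subspaces.

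There is no real obstacle here. The one point to check is that scaling by distinct reals separates the classes, and this holds precisely because $h$ is unbounded on $A$, which Step 1 ensures.
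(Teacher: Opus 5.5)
Your proof is correct and essentially the same as the paper's. Both take one $f\in C(X)$ unbounded on $A$ and show that its scalar multiples lie in pairwise distinct clopen $\sim_A$-classes (the paper uses $kf$ with $k\in[1,2]$, you use $rh$ with $r\in[0,1]$), and both then conclude via the topological-sum corollary. Passing from $h$ to $|h|$ is harmless but unnecessary, since $|rh-sh|=|r-s|\,|h|$ whatever the sign of $h$.
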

\begin{proof}
	In view of Theorem \ref{equi class} and Corollary \ref{topologicalsum}, it is enough to show that there are $\mathfrak{c}$ many disjoint equivalence classes. Since $A$ is not a bounded subset of $X$, we can find an $f\in C(X)$ such that $f$ is unbounded on $A$. Let $k\in [1,2]$ be a real number and we define the function $h_k=kf\in C(X)$. We assert that for $k,s\in [1,2]$ with $k\neq s$ implies that $[h_k]_A\neq [h_s]_A$. Now, it is enough to show that $h_k\notin [h_s]_A$. Choose, an $n\in \mathbb{N}$, then we can always find a $M_n>0$ such that $|k-s|M_n>n$. Since $f$ is unbounded on $A$, there exists a $c_M\in A$ such that $|f(c_M)|>M_n$. Now, $|h_k(c_M)-h_s(c_M)|=|kf(c_M)-sf(c_M)|>M_n|k-s|>n$.
\end{proof}

\begin{corollary}
	If the collection  $\mathcal{A}_{\aleph_\alpha}$($\aleph_\alpha\geq \aleph_1$) contains an unbounded set $A$, then $C_{\aleph_\alpha}(X)$ has at least $\mathfrak{c}$ many clopen sets.
\end{corollary}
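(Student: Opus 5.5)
The corollary comes from counting the clopen sets produced by the preceding theorem. Let $A\in\mathcal{A}_{\aleph_\alpha}$ be an unbounded set. By Theorem \ref{equi class}(2), every equivalence class $[f]_A$ is a clopen subset of $C_{\aleph_\alpha}(X)$. By Corollary \ref{topologicalsum}, these classes partition $C(X)$.

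Next we count the classes. Choose $f\in C(X)$ unbounded on $A$. The proof of the preceding theorem shows that the functions $h_k=kf$, for $k\in[1,2]$, lie in pairwise distinct classes $[h_k]_A$. The reason is that $|h_k-h_s|=|k-s|\,|f|$ is unbounded on $A$ whenever $k\neq s$. So the map $k\mapsto[h_k]_A$ is injective from $[1,2]$ into the family of classes, and there are at least $\mathfrak{c}$ distinct clopen classes.

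Each class is nonempty and the classes are pairwise disjoint. Hence the clopen sets $[h_k]_A$, $k\in[1,2]$, are distinct, and $C_{\aleph_\alpha}(X)$ has at least $\mathfrak{c}$ clopen sets. Taking unions of subfamilies would even give $2^{\mathfrak{c}}$ clopen sets, since any union of members of a clopen partition is clopen, but this is not needed.

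The only point needing care is the clopenness of $[f]_A$, which is Theorem \ref{equi class}(2). If it must be checked directly, the argument is as follows. For $g\in[f]_A$ we have $B(g,A,1)\subseteq[f]_A$, because an element of $B(g,A,1)$ differs from $g$ by less than $1$ on $A$, and hence differs from $f$ by a bounded amount on $A$. Thus each class is open. Its complement is a union of other classes, so it is also open, and each class is therefore closed.
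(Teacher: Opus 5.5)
Your proposal is correct and follows essentially the same route as the paper. The corollary is read off from the preceding theorem, whose proof uses exactly the clopen classes $[f]_A$ and the injective assignment $k\mapsto [kf]_A$ for $k\in[1,2]$. Your direct check that each $[f]_A$ is clopen, via $B(g,A,1)\subseteq [f]_A$, supplies the argument for Theorem~\ref{equi class}(2), which the paper states without proof, and it mirrors the paper's argument that $A^*(X)$ is clopen.
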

In view of Corollary \ref{Clopen contained in C*} and preceding corollary we have the following immediate remark.
\begin{remark}
The cardinality of the set of all non-empty clopen subsets of $C_{\aleph_\alpha}(X)$ is either 1 or at least $\mathfrak{c}$.
\end{remark}
\begin{theorem}
	For any space $X$, it is always possible to write $C_{\aleph_\alpha}(X)(\aleph_\alpha\geq \aleph_1)$ as the topological sum of at least $\mathfrak{c}$ many clopen subsets of $C_{\aleph_\alpha}(X)$ unless $X$ is a pseudocompact space.
\end{theorem}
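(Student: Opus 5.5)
The plan is to reduce the statement to the preceding theorem on unbounded sets in $\mathcal{A}_{\aleph_\alpha}$. That theorem says that if some $A\in\mathcal{A}_{\aleph_\alpha}$ is unbounded, then $C_{\aleph_\alpha}(X)$ is the topological sum of at least $\mathfrak{c}$ many subspaces, namely the classes $[f]_A$. These classes are clopen by Theorem \ref{equi class}, and Corollary \ref{topologicalsum} shows that they partition $C_{\aleph_\alpha}(X)$ as a topological sum. So it suffices to show the following: if $X$ is not pseudocompact and $\aleph_\alpha\geq\aleph_1$, then some $A\in\mathcal{A}_{\aleph_\alpha}$ is unbounded, i.e. some $f\in C(X)$ is unbounded on $A$.

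To produce such an $A$, assume $X$ is not pseudocompact and pick $f\in C(X)\setminus C^*(X)$. Choose points $x_n\in X$ with $|f(x_n)|>n$ for each $n\in\mathbb{N}$, exactly as in the proof of Theorem \ref{ disconnection C_s}. Let $A=\{x_1,x_2,\dots\}$. Then $|A|=\aleph_0<\aleph_1\leq\aleph_\alpha$, so $A\in\mathcal{A}_{\aleph_\alpha}$, and $f$ is unbounded on $A$. Hence $A$ is an unbounded subset in the sense of Theorem \ref{equi class}(1), i.e. $[\underline{0}]_A\neq C(X)$.

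Applying the preceding theorem to this $A$ finishes the argument. Concretely, the functions $h_k=kf$ for $k\in[1,2]$ lie in pairwise distinct classes $[h_k]_A$. So the family $\{[g]_A:g\in C(X)\}$ has at least $\mathfrak{c}$ distinct members, and $C_{\aleph_\alpha}(X)$ is their topological sum. Each summand is clopen because, by Theorem \ref{equi class}(2), it is a union of basic sets $B(g,A,1)$ and its complement is a union of other classes.

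The only step needing care is confirming that the countable set $A$ really lies in $\mathcal{A}_{\aleph_\alpha}$. This is exactly where the hypothesis $\aleph_\alpha\geq\aleph_1$ is used: for $\aleph_\alpha=\aleph_0$ only finite sets are allowed, and every function is bounded on a finite set. Everything else is a direct citation. The phrase ``unless $X$ is pseudocompact'' is consistent with this: when $X$ is pseudocompact, $C_{\aleph_\alpha}(X)=C^*(X)$ is connected by Corollary \ref{C^*(X) is component C_s}, so no such decomposition exists.
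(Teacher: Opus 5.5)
Your proposal is correct and matches the paper's argument. The paper gives no separate proof: it obtains the theorem from the preceding result on unbounded sets $A\in\mathcal{A}_{\aleph_\alpha}$, together with its earlier observation that any $f\in C(X)\setminus C^*(X)$ is unbounded on some countably infinite set, which lies in $\mathcal{A}_{\aleph_\alpha}$ precisely because $\aleph_\alpha\geq\aleph_1$.
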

\begin{corollary}
For a non pseudocompact space $X$, $C_s(X)$ can be expressed as the topological sum of at least $\mathfrak{c}$ many clopen subsets of $C_s(X)$.
\end{corollary}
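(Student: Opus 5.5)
The statement to prove is the last Corollary: for a non-pseudocompact $X$, $C_s(X)$ is a topological sum of at least $\mathfrak{c}$ many clopen subsets. Since $C_s(X)=C_{\aleph_1}(X)$, this is the preceding Theorem specialized to $\aleph_\alpha=\aleph_1$. So the plan is to check that a non-pseudocompact $X$ yields an unbounded set $A\in\mathcal{A}_{\aleph_1}$, i.e.\ a countable set, and then apply the theorem on unbounded sets together with Corollary \ref{topologicalsum}.

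First I produce the unbounded countable set. Since $X$ is not pseudocompact, there is $f\in C(X)\setminus C^*(X)$. As in the proof of Theorem \ref{ disconnection C_s}, I choose points $x_n\in X$ with $|f(x_n)|>n$ for each $n\in\mathbb{N}$ and set $A=\{x_1,x_2,\dots\}$. Then $A$ is countable, so $|A|<\aleph_1$ and $A\in\mathcal{A}_{\aleph_1}$. Moreover $f$ is unbounded on $A$, so $A$ is not a bounded subset of $X$ in the sense of Theorem \ref{equi class}.

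Next I invoke the theorem on unbounded sets with $\aleph_\alpha=\aleph_1$. By Theorem \ref{equi class}(2) and Corollary \ref{topologicalsum}, $C_{\aleph_1}(X)$ is the topological sum of the distinct classes $[g]_A$, and each of these classes is clopen. The functions $h_k=kf$ for $k\in[1,2]$ lie in pairwise distinct classes, because for $k\neq s$ we have $|h_k(x_n)-h_s(x_n)|=|k-s|\,|f(x_n)|>|k-s|\,n$, which is unbounded in $n$. Hence there are at least $\mathfrak{c}$ distinct classes. This gives the required decomposition of $C_s(X)$ into at least $\mathfrak{c}$ clopen summands.

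The only point needing care is that the classes $[g]_A$ are computed with respect to a single fixed countable $A$. This is what makes the family a partition of $C(X)$ into clopen pieces, and it is what lets the topological-sum structure follow directly. Beyond that there is no real obstacle: the argument is a direct specialization of the preceding theorem.
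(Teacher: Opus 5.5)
Your proposal is correct and follows the paper's route. The paper treats this corollary as the preceding theorem specialized to $\aleph_\alpha=\aleph_1$: a countable set $A$ on which some $f\in C(X)\setminus C^*(X)$ is unbounded gives the clopen partition into classes $[g]_A$ (Theorem \ref{equi class} and Corollary \ref{topologicalsum}), and the functions $kf$, $k\in[1,2]$, are pairwise inequivalent, so there are at least $\mathfrak{c}$ classes; you additionally spell out the step the paper leaves implicit, namely that non-pseudocompactness yields such an unbounded $A\in\mathcal{A}_{\aleph_1}$.
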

\subsection{Local compactness of $C_{\aleph_\alpha}(X)$} 	
In this section we show that, like $C_p(X)$, local compactness, hemicompactness and local pseudocompactness are equivalent for the space $C_{\aleph_\alpha}(X)$. From this section onward, we also assume that $\aleph_\alpha\geq \aleph_1$ for the space $C_{\aleph_\alpha}(X)$.
\begin{definition}	
	A topological space $X$ is said to be hemicompact if there exists a countable family of compact subsets $\{K_n:n\in \mathbb{N}\}$ of $X$ with the following property: given any compact subset $K$ of $X$, there exists $n\in \mathbb{N}$ such that $K\subseteq K_n$.
\end{definition}
It is clear that each compact space is hemicompact and every hemicompact space is $\sigma$-compact \cite{E TOPOLOGY1989}.

\begin{lemma}\label{nowhere locallycompact C_s}
	If $X$ is an infinite subset, then for any compact subset $K$ of $C_{\aleph_\alpha}(X)$, interior of $K$ in $C_{\aleph_\alpha}(X)$ is empty, i.e., $C_{\aleph_\alpha}(X)$ is nowhere locally compact.
\end{lemma}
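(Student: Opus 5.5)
We argue by contradiction. Suppose $K$ is a compact subset of $C_{\aleph_\alpha}(X)$ with nonempty interior. Since $C_{\aleph_\alpha}(X)$ is a topological group, translation is a homeomorphism, so we may assume $K$ contains a basic neighbourhood $B(\underline{0},A,\epsilon)$ with $A\in\mathcal{A}_{\aleph_\alpha}$ and $\epsilon>0$. Closed subsets of compact sets are compact. Hence the closure of the smaller set $B(\underline{0},A,\epsilon/2)$ in $C_{\aleph_\alpha}(X)$, which is contained in $\{g:|g|\le\epsilon/2 \text{ on } A\}\subseteq K$, is compact. The plan is to exhibit inside $B(\underline{0},A,\epsilon)$ a family of functions that obviously cannot lie in a compact set.

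The key observation is that the basic neighbourhood constrains $g$ only on the small set $A$, while $X$ is infinite. First we show that some point $x_0\in X$ lies outside the closure $\overline{A}$. In the Tychonoff setting, if $X=\overline{A}$ then $A$ is dense, so $X$ would be $\aleph_\alpha$-separable and Theorem \ref{qu} would give $C_{\aleph_\alpha}(X)=C_u(X)$. This case must be handled separately, and I expect it to be the main obstacle. When $C_{\aleph_\alpha}(X)=C_u(X)$, we are in a normed-type (metric) setting. Since $X$ is infinite and Tychonoff, $C(X)$ contains infinitely many functions with disjoint cozero sets: take a sequence of pairwise disjoint nonempty open sets $U_n$ and functions $f_n$ with $f_n(x_n)=\epsilon/2$, values in $[0,\epsilon/2]$, vanishing off $U_n$. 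Then $\|f_n-f_m\|_\infty=\epsilon/2$ for $n\ne m$, so $\{f_n\}$ is an infinite closed discrete subset of the uniform ball. This contradicts compactness.

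In the remaining case, choose $x_0\notin\overline{A}$. By complete regularity, pick $\varphi\in C(X)$ with $\varphi(x_0)=1$ and $\varphi\equiv0$ on $\overline{A}$. For every $n\in\mathbb{N}$, the function $n\varphi$ vanishes on $A$, so $n\varphi\in B(\underline{0},A,\epsilon)\subseteq K$. Evaluation at $x_0$, $e_{x_0}:C_{\aleph_\alpha}(X)\to\mathbb{R}$, is continuous, since $\{x_0\}\in\mathcal{A}_{\aleph_\alpha}$ and $C_p(X)\subseteq C_{\aleph_\alpha}(X)$. So $e_{x_0}(K)$ is a compact, hence bounded, subset of $\mathbb{R}$. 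But it contains $e_{x_0}(n\varphi)=n$ for all $n$, a contradiction.

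To unify the two cases, a cleaner route is to look for a point $x_0$ and a function that is small on $A$ but large at $x_0$. This is possible whenever $x_0\notin\overline{A}$. When $A$ is dense, the uniform-ball argument above replaces it. In writing this up I would first apply Theorem \ref{tychonoff calpha} to reduce to Tychonoff $X$, noting that $X$ remains infinite after the reduction (or simply assume the paper's standing Tychonoff hypothesis). I would then present the dichotomy "$\overline{A}\neq X$" versus "$A$ dense" as the two cases above.
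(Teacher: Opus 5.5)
Your argument is correct under the paper's standing Tychonoff hypothesis, but it is organised differently from the paper's proof.

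The paper makes no case distinction. Using $\aleph_\alpha\ge\aleph_1$, it enlarges $A$ so that it contains a countably infinite set; this only shrinks the ball. It then covers $K$ by finitely many balls $B(g_i,A,\epsilon/4)$, $1\le i\le n$. Next it picks $n+1$ distinct points $x_1,\dots,x_{n+1}$ \emph{inside} $A$, with bump functions $p_i$ satisfying $p_i(x_i)=\epsilon/2$ and $p_i(x_j)=0$ for $j\neq i$. By pigeonhole, two of the functions $f+p_i$ lie in one small ball. This is impossible, because they differ by $\epsilon/2$ at a point of $A$. Since the separation is witnessed at points of $A$ itself, density of $A$ never matters.

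Your dense case is the same idea, namely failure of total boundedness of a ball, carried out with an infinite uniformly separated sequence. It relies on the standard but here unproved fact that an infinite Hausdorff space admits a sequence of pairwise disjoint nonempty open sets. You can avoid that fact: a dense subset of an infinite $T_1$-space is infinite, so the paper's finite pigeonhole argument can be run on points of $A$.

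Your non-dense case, the unboundedness of $e_{x_0}$ on $\{n\varphi:n\in\mathbb{N}\}$, is an argument the paper uses only later, in the proof of Theorem \ref{variouscompact Calpha}, $(4)\Rightarrow(2)$. What your route buys is that it also covers $\aleph_\alpha=\aleph_0$, i.e.\ $C_p(X)$. There $A$ is finite, hence closed and not dense, so only your first case occurs. The paper's proof, by contrast, needs $\aleph_\alpha\ge\aleph_1$ in order to make $A$ infinite.

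One correction: delete the claim that the reduction of Theorem \ref{tychonoff calpha} keeps $X$ infinite. It does not, and the lemma is false without the Tychonoff hypothesis. For an infinite indiscrete space $X$, only constants are continuous, the associated space $Y$ is a single point, and $C_{\aleph_\alpha}(X)\cong\mathbb{R}$ is locally compact. So you must simply assume, as the paper does, that $X$ is Tychonoff.
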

\begin{proof}
	If possible, let the interior of $K$ in $C_{\aleph_\alpha}(X)$ be non-empty. Then there exists an $f\in C_{\aleph_\alpha}(X)$, $A\in \mathcal{A}_{\aleph_\alpha}$ and $\epsilon>0$ such that $B(f,A,\epsilon)\subseteq K$. Since $X$ is an infinite set, without loss of generality we can assume that $A$ is a countably infinite subset of $X$. Let $\{x_n:n\in \mathbb{N}\}$ be a countably infinite subset of $A$ such that $x_m\neq x_n$ if $m\neq n$. Since $K$ is compact, there are finitely many functions $g_1,g_2,\cdots,g_n$ in $K$ such that $K\subseteq \bigcup\limits_{i=1}^n B(g_i,A,\frac{\epsilon}{4})$. Consider the set $\{x_1,x_2,\cdots,x_{n+1}\}$ in $X$. Since $X$ is Tychonoff, for each $i\in\{1,2,\cdots,n+1\}$, there exists a continuous function $p_i:X\rightarrow[-\frac{\epsilon}{2},\frac{\epsilon}{2}]$ such that $p_i(x_i)=\frac{\epsilon}{2}$ and $p_i(x_j)=0$ if $i\neq j$, $j\in \{1,2,\cdots,n+1\}$. It is easy to check that for each $i\in \{1,2,\cdots,n+1\}$, $f+p_i\in B(f,A,\epsilon)$. Consequently, there exist indices $t,m\in \{1,2,\cdots,n+1\}$ such that $t\neq m$, $\{f+p_t, f+p_m\}\subseteq B(g_s,A,\frac{\epsilon}{4})$ and $s\in \{1,2,\cdots,n\}$. This implies for each $x\in A$ that $|(f+p_t)(x)-g_s(x)|<\frac{\epsilon}{4}$ and $|(f+p_m)(x)-g_s(x)|<\frac{\epsilon}{4}$. Consequently, for each $x\in A$, $|(f+p_t)(x)-(f+p_m)(x)|<\frac{\epsilon}{2}$ and therefore, for each $x\in A$, $|p_t(x)-p_m(x)|<\frac{\epsilon}{2}$. In particular with $x=x_t$, this implies $|p_t(x_t)-p_m(x_t)|<\frac{\epsilon}{2}$. But, we note that $p_t(x_t)=\frac{\epsilon}{2}$ and $p_m(x_t)=0$, so we arrive at a contradiction. 
\end{proof}

Before addressing another principal theorem of this section, we reproduce here the following two facts about the space $C_p(X)$ established in details in \cite{T C_p 2011}. 
\begin{lemma} \label{e_x C_s}
	\cite[Problem S067, Page 80]{T C_p 2011}
	
	For each point $x\in X$, the function $e_x:C_p(X)\rightarrow \mathbb{R}$ defined by $e_x(f)=f(x)$ for all $f\in C_p(X)$ is continuous, i.e., $e_x\in C_p(C_p(X))$.
\end{lemma}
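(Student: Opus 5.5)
To prove that $e_x$ is continuous on $C_p(X)$, I would work directly from the basic open sets of the point-open topology, which are the sets $B(f,A,\epsilon)$ with $A\in\mathcal{A}_{\aleph_0}$, i.e.\ $A$ finite. The plan is to verify continuity pointwise: I fix $f\in C(X)$ and an open interval around $e_x(f)=f(x)$, and then produce a basic neighbourhood of $f$ that $e_x$ maps into that interval.

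Concretely, I would take $\epsilon>0$ and consider the interval $(f(x)-\epsilon,f(x)+\epsilon)$. The singleton $A=\{x\}$ is finite, so $A\in\mathcal{A}_{\aleph_0}$, and $B(f,\{x\},\epsilon)$ is a basic open neighbourhood of $f$ in $C_p(X)$. For any $g$ in this set we have, by the definition of $B(f,A,\epsilon)$,
\[
|e_x(g)-e_x(f)|=|g(x)-f(x)|=\sup_{y\in\{x\}}|g(y)-f(y)|<\epsilon .
\]
Hence $e_x\bigl(B(f,\{x\},\epsilon)\bigr)\subseteq (f(x)-\epsilon,f(x)+\epsilon)$. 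Since the intervals of this form constitute a neighbourhood base at $f(x)$ in $\mathbb{R}$, this shows $e_x$ is continuous at $f$. As $f$ was arbitrary, $e_x$ is continuous on all of $C_p(X)$, which is precisely the assertion $e_x\in C_p(C_p(X))$.

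I would then add a remark that the same argument works verbatim in $C_{\aleph_\alpha}(X)$ for every $\aleph_\alpha$, because $\{x\}\in\mathcal{A}_{\aleph_\alpha}$ in all cases. Alternatively, one can note that $C_p(X)\subseteq C_{\aleph_\alpha}(X)$ as topologies, so a map continuous on $C_p(X)$ stays continuous when the domain carries the finer topology. I expect this to be the form in which the lemma is used later. Since $e_x$ is linear and $C_p(X)$ is a topological group, continuity at $\underline{0}$ would also suffice; however, the direct pointwise argument is just as short. I do not see any real obstacle here: the only thing to check is that singletons belong to $\mathcal{A}_{\aleph_0}$, which is immediate.
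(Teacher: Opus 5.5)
Your proof is correct: for $g\in B(f,\{x\},\epsilon)$ you get $|g(x)-f(x)|<\epsilon$, and that is all continuity of $e_x$ requires. The paper gives no argument of its own at this point; it only cites Tkachuk's problem book. The usual textbook route is to regard $C_p(X)$ as a subspace of the product $\mathbb{R}^X$. Then $e_x$ is the restriction of the coordinate projection $\pi_x$, hence continuous.

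Your check through the basic sets $B(f,\{x\},\epsilon)$ is the same fact written in the paper's own notation. Its advantage is that it carries over verbatim to $C_{\aleph_\alpha}(X)$, since $\{x\}\in\mathcal{A}_{\aleph_\alpha}$. That is how the lemma is actually used, in the proof of Theorem~\ref{variouscompact Calpha} and in the corollary that $C_{\aleph_\alpha}(X)$ is never pseudocompact. There the paper passes from $C_p(X)$ to $C_{\aleph_\alpha}(X)$ via the inclusion of topologies $C_p(X)\subseteq C_{\aleph_\alpha}(X)$, exactly as in your alternative remark.
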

\begin{lemma}\label{C_p sigmacompact}
	\cite[Problem S186, Page 152]{T C_p 2011} 
	
	If $C_p(X)$ is $\sigma$-countably compact, then the space $X$ is finite.
	
\end{lemma}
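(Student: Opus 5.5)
The statement is Tkachuk's result: if $C_p(X)$ is $\sigma$-countably compact, then $X$ is finite. The plan is to argue by contradiction. Suppose $X$ is infinite and $C_p(X)=\bigcup_{n\in\mathbb{N}} K_n$ with each $K_n$ countably compact. The evaluation maps of Lemma \ref{e_x C_s} are continuous, so for every $x\in X$ the image $e_x(K_n)=\{f(x):f\in K_n\}$ is a countably compact subset of $\mathbb{R}$, hence compact and bounded. Thus each $K_n$ is pointwise bounded: there is a function $\varphi_n:X\rightarrow[0,\infty)$, $\varphi_n(x)=\sup_{f\in K_n}|f(x)|$, with $|f(x)|\leq\varphi_n(x)$ for all $f\in K_n$.

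Next, since $X$ is an infinite Tychonoff space, I will choose a sequence of distinct points $\{x_m:m\in\mathbb{N}\}$ together with pairwise disjoint open sets $U_m\ni x_m$. Such a sequence exists in every infinite Hausdorff space: one can always find an infinite discrete family of points with disjoint neighbourhoods, by a standard inductive argument. I will then diagonalize against the countably many bounds. For each $m$, pick by complete regularity a continuous $h_m:X\rightarrow[0,1]$ with $h_m(x_m)=1$ and $h_m=0$ off $U_m$.

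The natural choice is $f=\sum_m (\varphi_m(x_m)+1)h_m$. However, this sum need not be continuous, because the family $\{U_m\}$ may fail to be locally finite. To avoid this problem I will not build a single global function. Instead I will use the finite-support trick adapted to $C_p$. Consider the closed subspace $F_n=\{f\in C_p(X): |f(x_n)|\leq \varphi_n(x_n)\}$, which contains $K_n$, and put $G_n=\{f:|f(x_n)|>\varphi_n(x_n)\}$.

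The cleaner route is a Baire-category argument. Each $K_n$ is countably compact, hence its closure $\overline{K_n}$ in $C_p(X)$ is pointwise bounded by the same $\varphi_n$, because pointwise bounds pass to pointwise limits. So $C_p(X)=\bigcup_n \overline{K_n}$ with each $\overline{K_n}$ closed. The key claim is that each $\overline{K_n}$ has empty interior when $X$ is infinite. Indeed, a basic open set $B(f,A,\epsilon)$ with $A$ finite contains a function $f+t h$ with $t$ arbitrarily large, where $h$ vanishes on $A$ and $h(y)=1$ at some point $y\notin A$. This violates the bound $\varphi_n(y)$.

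It remains to obtain a contradiction from this, which is the main obstacle, since $C_p(X)$ need not be Baire. I will therefore run the Baire argument by hand, building a nested sequence of basic sets $B(f_k,A_k,\epsilon_k)$ with finite $A_k\subseteq A_{k+1}$, $f_k|_{A_k}=f_{k+1}|_{A_k}$, and $\epsilon_{k+1}<\epsilon_k/2$. The sets are chosen so that, at a fresh point $y_k\in A_{k+1}$, the value $f_{k+1}(y_k)$ exceeds $\varphi_k(y_k)+1$; this is forced exactly as in the empty-interior claim. The points $y_k$ are taken from the sequence $x_m$ with disjoint neighbourhoods $U_m$, and the corrections $f_{k+1}-f_k$ are multiples of the $h_m$. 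The limit $f$ must be continuous, and this is where I would try to arrange local finiteness. Since this may fail, I would instead fall back on countable compactness directly. For each $k$ the set $K_k\cap \overline{B(f_{k+1},A_{k+1},\epsilon_{k+1})}$ is empty, because its members are bounded at $y_k$ while the ball forces large values there. A point of $\bigcap_k \overline{B(f_k,A_k,\epsilon_k)}$ then lies in no $K_k$, a contradiction. To produce that point, I define $f$ only on $\bigcup A_k$ and extend it continuously. The extension works because each $y_k$ is chosen inside its own $U_m$ with values dominated by a convergent series of $h_m$'s scaled by $2^{-k}$ after a preliminary translation, so that uniform convergence of the partial sums gives continuity.
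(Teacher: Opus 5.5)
The paper does not prove this lemma; it only cites Tkachuk's Problem S186. Judged on its own, your argument has a genuine gap at the last step, and the gap cannot be closed within your strategy. Everything you use about $K_k$ is the pointwise bound $\varphi_k$, together with the fact that pointwise bounded sets are nowhere dense in $C_p(X)$. That includes the passage where you say you ``fall back on countable compactness directly''. This information alone is consistent. For any infinite compact $X$ (e.g.\ $[0,1]$ or $\beta\mathbb{N}$), $C(X)=\bigcup_n\{f:\sup_X|f|\le n\}$ is a countable union of pointwise bounded, closed, nowhere dense subsets of $C_p(X)$. For these sets $\varphi_k\equiv k$. Your requirement $f(y_k)>\varphi_k(y_k)+1$ for every $k$ then forces $f$ to be unbounded. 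No continuous function on a compact space is unbounded, whatever series of $h_m$'s or preliminary translation is used to build it. So the continuous extension you need need not exist, $\bigcap_k\overline{B(f_k,A_k,\epsilon_k)}$ can be empty, and the hand-made Baire argument breaks exactly where $C_p(X)$ fails to be Baire. Your first idea, $f=\sum_m(\varphi_m(x_m)+1)h_m$, does work when $X$ is not pseudocompact, because then the $U_m$ can be chosen to form a discrete family. The real content of the lemma is the pseudocompact case.

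The missing ingredient is a stronger consequence of countable compactness. If $K\subseteq C_p(X)$ is countably compact and $g_j\in K$ converge pointwise to some $g\in\mathbb{R}^X$, then a cluster point of $(g_j)$ in $K$ is also a cluster point in $\mathbb{R}^X$, so it equals $g$. Hence $g\in K$, and in particular $g$ is continuous. Keep your disjoint $U_m$ and $h_m$, and for $c\in\mathbb{R}^{\omega}$ let $T(c)\in\mathbb{R}^X$ be the pointwise sum $\sum_m c_mh_m$; at most one term is nonzero at each point. The map $T:\mathbb{R}^\omega\to\mathbb{R}^X$ is continuous, so by the property above $E_n=\{c:T(c)\in K_n\}$ is sequentially closed, hence closed, in the metrizable space $\mathbb{R}^\omega$. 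If $X$ is pseudocompact, $T(c)$ is continuous if and only if $c\in c_0$. Indeed, suppose $|c_m|\ge\epsilon$ for infinitely many $m$. The disjoint open sets $\{h_m>\frac12\}$ for those $m$ cannot be locally finite. At a point where they accumulate, $T(c)$ is discontinuous: that point lies outside $\bigcup_mU_m$, so $T(c)$ vanishes there. Thus $c_0=\bigcup_nE_n$ with each $E_n$ closed in $\mathbb{R}^\omega$. This is impossible by a block diagonalization. Given a finite sequence $s_{n-1}$, the point $(s_{n-1},\frac1n,\frac1n,\dots)$ is not in $c_0\supseteq E_n$. So appending a long enough finite block of $\frac1n$'s to $s_{n-1}$ gives an $s_n$ all of whose extensions avoid $E_n$. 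The limit sequence lies in $c_0$ but in no $E_n$, which is the contradiction.
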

\begin{theorem}\label{variouscompact Calpha}
	The following statements are equivalent for a space $X$.
	\begin{enumerate}
		\item $X$ is finite.
		\item $C_{\aleph_\alpha}(X)$ is locally compact.
		\item $C_{\aleph_\alpha}(X)$ is locally countably compact.
		\item $C_{\aleph_\alpha}(X)$ is locally pseudocompact.
		\item $C_{\aleph_\alpha}(X)$ is hemicompact.
		\item $C_{\aleph_\alpha}(X)$ is $\sigma$-compact.
	\end{enumerate}
\end{theorem}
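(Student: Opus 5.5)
We will run the cycle (1)$\Rightarrow$(2)$\Rightarrow$(3)$\Rightarrow$(4)$\Rightarrow$(1) and handle (5),(6) separately via (1)$\Rightarrow$(5)$\Rightarrow$(6)$\Rightarrow$(1).

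\emph{The easy implications.} If $X$ is finite, then $X\in\mathcal{A}_{\aleph_\alpha}$, so $C_{\aleph_\alpha}(X)=C_p(X)=C_u(X)$. Moreover $C(X)=\mathbb{R}^X$ with the product topology, i.e. $C_{\aleph_\alpha}(X)\cong\mathbb{R}^n$ for $n=|X|$. This space is locally compact and hemicompact, as witnessed by the cubes $[-m,m]^n$. Hence (1)$\Rightarrow$(2) and (1)$\Rightarrow$(5). The implications (2)$\Rightarrow$(3)$\Rightarrow$(4) hold because compact implies countably compact implies pseudocompact, and closures of compact neighbourhoods serve. Finally, (5)$\Rightarrow$(6) is immediate.

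\emph{(4)$\Rightarrow$(1).} Suppose $X$ is infinite and $C_{\aleph_\alpha}(X)$ is locally pseudocompact. Take $f$ together with a basic neighbourhood $U=B(f,A,\epsilon)$ whose closure (or a neighbourhood inside a pseudocompact set $P$) is pseudocompact. As in Lemma \ref{nowhere locallycompact C_s}, we may take $A$ countably infinite with distinct points $x_n\in A$. Using Tychonoff-ness, for each $n$ we choose a continuous $p_n:X\to[0,\epsilon/2]$ with $p_n(x_i)=0$ for $i\neq n$ (for $i\le n$ at least) and $p_n(x_n)=\epsilon/2$. We then consider the function
\[
\Phi(g)=\sum_n n\,\max\bigl(0,\,g(x_n)-f(x_n)-\epsilon/4\bigr)\cdot \tfrac{1}{?}.
\]
More cleanly, the maps $e_{x_n}$ of Lemma \ref{e_x C_s} are continuous on $C_{\aleph_\alpha}(X)$, since the topology is finer than $C_p(X)$. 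Define $\Phi(g)=\sum_n n\,\phi_n(g(x_n)-f(x_n))$, where $\phi_n$ is a continuous bump supported in $(\epsilon/4,\epsilon)$ with value $1$ at $\epsilon/2$. The key point is that any $g$ has a $C_p$-neighbourhood, $\{h:|h(x_n)-g(x_n)|<\epsilon/8\}$, on which only the single term with $n$ satisfying ... could matter. This is the step I expect to be the main obstacle: local finiteness of the sum fails in $C_p$, because infinitely many coordinates can lie near $\epsilon/2$. The fix is to use the finer topology directly. Take the neighbourhood $B(g,A,\epsilon/8)$; it is open in $C_{\aleph_\alpha}(X)$ since $A\in\mathcal{A}_{\aleph_\alpha}$. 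On this neighbourhood, each summand changes by a controlled amount, but we must still make $\Phi$ finite. A better choice is therefore $\Phi(g)=\sup_n n\,\phi_n(g(x_n)-f(x_n))$ with $\phi_n$ taking values in $[0,1]$. This is unbounded on $U$, witnessed by $f+p_n$ giving $\Phi\ge n$. If $\Phi$ is finite-valued and continuous, restricting it to the pseudocompact set contradicts pseudocompactness. Continuity needs equicontinuity of the $\phi_n$ while the weights $n$ grow, so I would instead use shrinking supports $\phi_n$ with Lipschitz constant $\le 1$ and weights $n$. The fallback plan is to produce an infinite discrete family of open sets $B(f+p_n,A,\epsilon/8)$ inside $U$. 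These are pairwise disjoint, since they are separated at the coordinate $x_n$. One then checks that the family is locally finite: near any $g$, the $\sup_A$-ball of radius $\epsilon/16$ meets at most one of them, because two distinct members differ by $\epsilon/2$ at two points of $A$, whereas the ball has $\sup_A$-diameter $\epsilon/8$. An infinite locally finite family of open sets contained in a pseudocompact subspace with nonempty interior contradicts pseudocompactness (the standard characterization of pseudocompactness in Tychonoff spaces, applied to the regular closed neighbourhood). This yields (4)$\Rightarrow$(1).

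\emph{(6)$\Rightarrow$(1).} If $C_{\aleph_\alpha}(X)=\bigcup_n K_n$ with each $K_n$ compact, then since the identity map $C_{\aleph_\alpha}(X)\to C_p(X)$ is continuous, each $K_n$ is compact in $C_p(X)$. Hence $C_p(X)$ is $\sigma$-compact, so in particular $\sigma$-countably compact, and Lemma \ref{C_p sigmacompact} gives that $X$ is finite.
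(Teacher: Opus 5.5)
Your proof is correct. The easy implications and (6)$\Rightarrow$(1) match the paper: compact pieces remain compact in the coarser $C_p(X)$, and then Lemma \ref{C_p sigmacompact} applies. Your (4)$\Rightarrow$(1), however, takes a genuinely different route.

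The paper proves (4)$\Rightarrow$(2) as follows. If $B(\underline{0},A,\epsilon)$ lies in a pseudocompact neighbourhood $L$, then $\overline{A}=X$, since for $y\notin\overline{A}$ the evaluation $e_y$ would be unbounded on $L$. So $X$ is $\aleph_\alpha$-separable, and $C_{\aleph_\alpha}(X)$ is metrizable by the forward-referenced Theorem \ref{metrizability}. A pseudocompact neighbourhood in a metrizable space is compact, and Lemma \ref{nowhere locallycompact C_s} then gives finiteness.

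You instead attack pseudocompactness head-on. Use the extended pseudometric $d_A(g,h)=\sup_{x\in A}|g(x)-h(x)|$, whose balls $B(g,A,r)$ are open in $C_{\aleph_\alpha}(X)$. The points $f+p_n\in U$ are pairwise at $d_A$-distance at least $\epsilon/2$. Hence the balls $B(f+p_n,A,\epsilon/8)$ form an infinite discrete family of nonempty open sets inside $P$, which is impossible in a pseudocompact Tychonoff space. This is the paper's Lemma \ref{nowhere locallycompact C_s} upgraded from compact to pseudocompact subsets, with the same separating functions $p_i$. Your argument is therefore self-contained and avoids Section 4 entirely. The paper's route instead reduces the pseudocompact case to the compact one via metrizability, reusing existing results at the cost of a forward reference.

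When writing it up, fix the following:
\begin{enumerate}
\item Delete the abandoned attempts with $\Phi$: the formula with a placeholder, and the $\sup$-version, whose continuity you never establish.
\item Say ``infinite'' rather than ``countably infinite'' for $A$. You may enlarge $A$ by a countable set, which only shrinks $U$ and uses $\aleph_\alpha\geq\aleph_1$.
\item If, as your parenthetical allows, $p_n$ vanishes only at the $x_i$ with $i<n$, then $V_m=B(f+p_m,A,\epsilon/8)$ and $V_n$ (for $m<n$) are separated at $x_m$, not $x_n$.
\item State the local-finiteness estimate explicitly. Points of $V_m$ and $V_n$ are at $d_A$-distance more than $\epsilon/2-2\cdot\epsilon/8=\epsilon/4$, while $B(g,A,\epsilon/16)$ has $d_A$-diameter at most $\epsilon/8$.
\item No regular closed neighbourhood is needed. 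The $V_n$ are open and locally finite in $P$, so the characterization of pseudocompactness applies directly to the Tychonoff subspace $P$.
\end{enumerate}
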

\begin{proof}
	$(1)\implies (2)$ is trivial because if $|X|=n\in \mathbb{N}$, then $C_{\aleph_\alpha}(X)$ is the same as then space $\mathbb{R}^n$, which is locally compact.
	
	$(2)\implies(3)\implies(4)$: Trivial.

	$(2)\implies (1)$: Follows from the Lemma \ref{nowhere locallycompact C_s}.

	$(4)\implies (2)$: We first show that $X$ is a $\aleph_\alpha$-separable space. We argue by contradiction and assume that $X$ is not $\aleph_\alpha$-separable. Now by $(4)$, the function $\underline{0}$ has a pseudocompact neighborhood $L$ in the space $C_{\aleph_\alpha}(X)$. So, we can write $B(\underline{0},A,\epsilon )\subseteq L$, for some $A\in \mathcal{A}_{\aleph_\alpha}$ and $\epsilon>0$ in $\mathbb{R}$. Since $\overline{A}\subsetneq X$, we can choose a point $y\in X\setminus \overline{A}$. By using the Tychonoffness of the space $X$, we can find out for each $n\in \mathbb{N}$, an $f_n\in C_{\aleph_\alpha}(X)$ such that $f_n(y)=n$ and $f_n(\overline{A})=\{0\}$. It is clear that $f_n\in B(\underline{0},A,\epsilon)$ and hence, $f_n\in L$. We note that the function $e_y:C_p(X)\rightarrow\mathbb{R}$ defined by $e_y(f)=f(y)$ and therefore, $e_y(f_n)=f_n(y)=n$ is continuous by Lemma \ref{e_x C_s}. Since $C_p(X)\subseteq C_{\aleph_\alpha}(X)$, it follows that $e_y:C_{\aleph_\alpha}(X)\rightarrow \mathbb{R}$ is also a continuous map. Thus $e_y\in C_{}(C_{\aleph_\alpha}(X))$ and takes the value $n$ at each point $f_n\in L$ of $C_{\aleph_\alpha}(X)$. Since $n\in \mathbb{N}$ is chosen arbitrarily, it follows that $e_y$ is an unbounded continuous function on the subset $L$ of $C_{\aleph_\alpha}(X)$. This contradicts the pseudocompactness of $L$. Thus $X$ is a $\aleph_\alpha$-separable space. It follows from Theorem \ref{metrizability} to be proved in the next section that $C_{\aleph_\alpha}(X)$ is a metrizable space. Since $C_{\aleph_\alpha}(X)$ is already assumed to be locally pseudocompact, it follows that $C_{\aleph_\alpha}(X)$ is locally compact.
	
	$(1)\implies (5)\implies (6)$: Trivial. 
	
	To complete the theorem we need to proof $(6)\implies (1)$: So, our hypothesis is $C_{\aleph_\alpha}(X)$ is $\sigma$-compact. In particular $C_{\aleph_\alpha}(X)$ is $\sigma$-countably compact. Hence, we can write, $C_{\aleph_\alpha}(X)=\bigcup\limits_{i=1}^\infty K_i$, where $\{K_i:i\in \mathbb{N}\}$ is a countable family of countably compact subsets of $C_{\aleph_\alpha}(X)$. Since $C_p(X)\subseteq C_{\aleph_\alpha}(X)$, it follows that each $K_n$ is countably compact subset of $C_p(X)$. Thus $C_p(X)$ turns out to be a $\sigma$-countably compact space. It follows from Lemma \ref{C_p sigmacompact} that $X$ is a finite set.
\end{proof}

Since, for each $n\in \mathbb{N}$, the constant function $\underline{n}\in C(X)$, where $\underline{n}(x)=n$ for all $x\in X$, for any $y\in X$ then $e_y\in C(C_{\aleph_\alpha}(X))$, which takes the value $n$ at each point $\underline{n}\in C_{\aleph_\alpha}(X)$. This facts leads to the following corollary.
\begin{corollary}
	For any space $X$, $C_{\aleph_\alpha}(X)$ is never pseudocompact.
\end{corollary}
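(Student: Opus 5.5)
The corollary says that $C_{\aleph_\alpha}(X)$ is never pseudocompact. The plan is to exhibit, for an arbitrary space $X$, one unbounded real-valued continuous function on $C_{\aleph_\alpha}(X)$. By Theorem \ref{tychonoff calpha} we may assume $X$ is Tychonoff. We may also assume $X\neq\emptyset$; this is implicit in the paper, since for $X=\emptyset$ the space $C(X)$ is a single point.

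Fix any point $y\in X$ and consider the evaluation map $e_y\colon C(X)\to\mathbb{R}$, $e_y(f)=f(y)$.

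The first step is continuity. By Lemma \ref{e_x C_s}, $e_y$ is continuous on $C_p(X)$. The topology of $C_{\aleph_\alpha}(X)$ is finer than that of $C_p(X)$, as noted in Section 2, so $e_y$ is continuous on $C_{\aleph_\alpha}(X)$ as well. One can also check this directly: $\{y\}\in\mathcal{A}_{\aleph_\alpha}$, so $B(f,\{y\},\epsilon)$ is a basic neighbourhood of $f$, and its image under $e_y$ lies in $(f(y)-\epsilon,f(y)+\epsilon)$.

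The second step is unboundedness. For each $n\in\mathbb{N}$ the constant function $\underline{n}$ belongs to $C(X)$, and $e_y(\underline{n})=n$. Hence $e_y\in C(C_{\aleph_\alpha}(X))$ is unbounded, so $C_{\aleph_\alpha}(X)$ is not pseudocompact.

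Both steps are routine. The only point needing care is the trivial case $X=\emptyset$, which has to be excluded as indicated above.
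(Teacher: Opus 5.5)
Your proof is correct and is essentially the paper's own argument: the evaluation map $e_y$ is continuous on $C_{\aleph_\alpha}(X)$ because the topology is finer than that of $C_p(X)$, and it takes the value $n$ at the constant function $\underline{n}$. The reduction to Tychonoff $X$ is unnecessary, since continuity of $e_y$ already follows from $\{y\}\in\mathcal{A}_{\aleph_\alpha}$; excluding $X=\emptyset$ is a legitimate caveat that the paper leaves implicit.
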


	\section{Metrizability of $C_{\aleph_\alpha}(X)$}
	In this section, we first show that the metrizability of $C_{\aleph_\alpha}(X)$ is equivalent to several well-known topological properties each weaker in general than even the first countability of a space. Subsequently, we realize that a number of familiar completeness properties become equivalent for the space $C_{\aleph_\alpha}(X)$. To make the paper self-contained we reproduce the formal definition of all these properties: A subset $S$ of $Y$ is said to have a countable character if there exists a sequence $\{w_n:n\in \mathbb{N}\}$ of open subsets of $Y$ each containing $S$ such that if $W$ is an open subset of $Y$ containing $S$, then $w_n\subseteq W$ for some $n\in \mathbb{N}$. A space $Y$ is said to be of pointwise countable type if each point of $Y$ is contained in a compact set with countable character. A space $Y$ is said to be a q-space if for each point $y\in Y$, there exists a sequence $\{U_n:n\in \mathbb{N}\}$ of neighborhoods of $y$ such that if $y_n\in U_n$ for each $n\in \mathbb{N}$, then $\{y_n:n\in \mathbb{N}\}$ has a cluster point in $Y$.

	\begin{theorem}\label{metrizability}
		The following statements are equivalent for a space $X$.
		\begin{enumerate}
			\item $X$ is $\aleph_\alpha$-separable.
			\item $C_{\aleph_\alpha}(X)=C_u(X)$.
			\item $C_{\aleph_\alpha}(X)$ is metrizable by a complete metric.
			\item $C_{\aleph_\alpha}(X)$ is first countable.
			\item $C_{\aleph_\alpha}(X)$ is a q-space.
			\item $C_{\aleph_\alpha}(X)$ is of pointwise countable type.
			
		\end{enumerate}
	\end{theorem}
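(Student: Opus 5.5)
I would prove the cycle $(1)\Rightarrow(2)\Rightarrow(3)\Rightarrow(4)\Rightarrow(5)$, add the side chain $(3)\Rightarrow(6)\Rightarrow(5)$, and close everything with $(5)\Rightarrow(1)$. The implication $(1)\Rightarrow(2)$ is exactly Theorem \ref{qu}. For $(2)\Rightarrow(3)$, use the standard fact that $C_u(X)$ is metrized by the complete metric $d(f,g)=\min\{1,\sup_{x\in X}|f(x)-g(x)|\}$. Completeness holds because a uniform limit of continuous functions is continuous. The implications $(3)\Rightarrow(4)$ and $(4)\Rightarrow(5)$ are routine: in a first countable space, take $U_n$ to be a decreasing countable local base at $y$; then any sequence with $y_n\in U_n$ converges to $y$. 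Likewise $(3)\Rightarrow(6)$ is routine, since in a metric space each point $\{y\}$ is compact and has countable character. For $(6)\Rightarrow(5)$, let $K\ni y$ be compact with countable character witnessed by $\{W_n\}$, and put $U_n=W_1\cap\dots\cap W_n$. If $y_n\in U_n$ and $\{y_n\}$ had no cluster point, then $F=\{y_n\}$ would be closed and would meet $K$ in at most finitely many points. Then $W=Y\setminus(F\setminus K)$ would be an open set containing $K$, so $U_n\subseteq W$ for some $n$. This forces $y_m\in K$ for all $m\geq n$, and then compactness of $K$ gives a cluster point, a contradiction.

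The main step is $(5)\Rightarrow(1)$, which I would prove by contraposition, reusing the idea from $(4)\Rightarrow(2)$ of Theorem \ref{variouscompact Calpha}. Since $C_{\aleph_\alpha}(X)$ is a topological group, being a q-space at one point gives it at $\underline{0}$. Fix neighborhoods $U_n$ of $\underline{0}$ as in the definition of a q-space, and shrink each to a basic one $B(\underline{0},A_n,\epsilon_n)\subseteq U_n$ with $A_n\in\mathcal{A}_{\aleph_\alpha}$.

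The obstacle is to find a single point outside $\overline{\bigcup_n A_n}$. This requires $|\bigcup_n A_n|<\aleph_\alpha$ whenever $X$ is not $\aleph_\alpha$-separable. If $\aleph_\alpha$ has uncountable cofinality (for instance, any successor cardinal), a countable union of sets of size $<\aleph_\alpha$ has size $<\aleph_\alpha$, so $A=\bigcup_n A_n$ is not dense and we pick $y\in X\setminus\overline{A}$. By Tychonoffness, choose $f_n\in C(X)$ with $f_n(\overline{A})=\{0\}$ and $f_n(y)=n$. Then $f_n\in B(\underline{0},A_n,\epsilon_n)\subseteq U_n$. However, $e_y$ is continuous on $C_{\aleph_\alpha}(X)$ by Lemma \ref{e_x C_s} together with $C_p(X)\subseteq C_{\aleph_\alpha}(X)$, and $e_y(f_n)=n\to\infty$. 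So no subsequence of $\{f_n\}$ has a cluster point, which is a contradiction.

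For singular $\aleph_\alpha$ of countable cofinality, the same argument fails as written, and I would first try to rescue it as follows. Each $A_n$ is non-dense, so pick $y_n\notin\overline{A_n}$ and choose $f_n$ vanishing on $\overline{A_n}$ with $f_n(y_n)=n$. I would then try to arrange that the values at the points $y_k$ stay controlled, so that $\{f_n\}$ has no cluster point. If this cannot be arranged, I would restrict the theorem, via the convention of the paper, to cardinals for which countable unions of small sets remain small, as $\aleph_1$ in the $C_s(X)$ case. I expect this cofinality issue to be the real difficulty.
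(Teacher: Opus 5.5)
Your decomposition and your argument for $(5)\Rightarrow(1)$ are essentially the paper's. Take basic neighbourhoods $B(\underline{0},A_n,\epsilon_n)$ witnessing the q-property at $\underline{0}$. Pick a point $y\notin\overline{\bigcup_n A_n}$, and take $f_n$ vanishing on $\overline{A_n}$ with $f_n(y)=n$. Then a neighbourhood $B(f,\{y\},1)$ of any would-be cluster point contains at most two of the $f_n$. The paper checks this by hand; you get it from continuity of $e_y$. Your remaining implications also follow the paper.

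\textbf{The gap.} The gap is the one you flag yourself, and you leave it open. If $\mathrm{cf}(\aleph_\alpha)=\omega$ (e.g.\ $\aleph_\alpha=\aleph_\omega$), then $\bigcup_n A_n$ can have cardinality $\aleph_\alpha$ and be dense, so no such $y$ need exist. The paper's proof has exactly the same hidden assumption. Its step ``$D=\bigcup_n A_n$ is dense in $X$ and hence $X$ is $\aleph_\alpha$-separable'' needs $|D|<\aleph_\alpha$. So as written, your proposal (like the paper's argument) proves the theorem only for $\aleph_\alpha$ of uncountable cofinality.

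\textbf{Closing it.} Restricting the theorem is unnecessary, because your ``controlled values'' idea can be completed as follows.
\begin{enumerate}
\item Assume $X$ is not $\aleph_\alpha$-separable. Choose recursively $y_n\notin\overline{A_n\cup\{y_1,\dots,y_{n-1}\}}$; this is possible since that set has cardinality $<\aleph_\alpha$.
\item Take $f_n\in C(X)$ vanishing on that closure with $f_n(y_n)=n$. Then $f_n\in B(\underline{0},A_n,\epsilon_n)$.
\item The set $Y_0=\{y_k:k\in\mathbb{N}\}$ is countable, so $Y_0\in\mathcal{A}_{\aleph_\alpha}$ because $\aleph_\alpha\geq\aleph_1$.
\item Suppose some $f$ were a cluster point of $\{f_n\}$. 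Then $B(f,Y_0,1)$ would contain $f_m$ and $f_n$ for some $2\le m<n$. Since $y_m\in\{y_1,\dots,y_{n-1}\}$, we have $f_n(y_m)=0$, while $f_m(y_m)=m$. This gives $m=|f_m(y_m)-f_n(y_m)|<2$, a contradiction.
\end{enumerate}
This version never needs a single point outside $\overline{\bigcup_n A_n}$. It therefore proves $(5)\Rightarrow(1)$ uniformly for every $\aleph_\alpha\geq\aleph_1$. It also shows exactly where $\aleph_\alpha\geq\aleph_1$ is used: the test set $Y_0$ must be allowed to be infinite.
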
 
\begin{proof}
 $(1)\implies(2)$: Let $X$ be $\aleph_\alpha$-separable with a subset $D$ of $X$, dense in it and $|D|< \aleph_\alpha$. Then $B_u(\underline{0},\epsilon)\equiv \{f\in C(X):\sup\limits_{x\in X}|f(x)|<\epsilon\}$, $\epsilon>0$ is a typical basic open neighborhood of $\underline{0}$ in the space $C_u(X)$ and we see that $B(\underline{0},D,\frac{\epsilon}{2})\subseteq B_u(\underline{0},\epsilon)$. This proves that $C_u(X)=C_{\aleph_\alpha}(X)$.
 
 $(2)\implies(3)$ is immediate because $C_u(X)$ is completely metrizable \cite[Theorem 2.6]{MKJ FUNCTIONSPACE2018}.
 
 $(3)\implies (4)\implies(5)$: Trivial.

 Since every metric space is of pointwise countable type and every pointwise countable type space is a q-space (these facts are well-known and in fact could also be proved by routine arguments), the implication relations $(3)\implies(6)\implies(5)$ hold.

 $(5)\implies (1)$: Since $C_{\aleph_\alpha}(X)$ is a $q$-space, there exists a sequence of basic open neighborhoods of $\underline{0}$, $\{B(\underline{0},A_n,\epsilon_n)\}$ where $A_n\in \mathcal{A}_{\aleph_\alpha}$ and $\epsilon_n>0$, such that if we choose $g_n\in B(\underline{0},A_n,\epsilon_n)$ arbitrarily for each $n\in \mathbb{N}$, then the set $\{g_n:n\in \mathbb{N}\}$ has a cluster point in $C_{\aleph_\alpha}(X)$. We claim that $D=\bigcup\limits_{n\in \mathbb{N}}A_n$ is dense in $X$ and hence, $X$ is $\aleph_\alpha$-separable. If possible, let $D$ be not dense in $X$. Then there exists a point $x\in X\setminus \overline{D}$. By using Tychonoffness of $X$, for each $n\in \mathbb{N}$, there exists an $f_n\in C(X)$ such that $f_n(\overline{A_n})=\{0\}$ and $f_n(x)=n$. It is clear that $f_n \in B(\underline{0},A_n,\epsilon_n)$ for each $n\in \mathbb{N}$. Therefore, $\{f_n\}$ has a cluster point in $C_{\aleph_\alpha}(X)$. Hence, the open neighborhood, $B(f,\{x\},1)$ of $f$ in the space $C_{\aleph_\alpha}(X)$ contains an $f_m$ for some $m\in \mathbb{N}$. This implies that $f(x)-1<f_m(x)<f(x)+1$, i.e., $f(x)-1<m<f(x)+1$$\cdots$(i).
 
 Again there exists an $n>4m+4$ such that $f_n\in B(f,\{x\},1)$. Consequently, $f(x)-1<n<f(x)+1$$\cdots$(ii). It follows from (i) and (ii) that $f(x)-1<m<4m+2<n-2<f(x)-1$- which is a contradiction.
 \end{proof}

A space $Y$ is said to be \v{C}ech-complete if it is a $G_\delta$-subset of $\beta Y$, the Stone-\v{C}ech compactification of $Y$. It turns out as the following result suggests that complete metrizability and \v{C}ech-completeness become equivalent for the space $C_{\aleph_\alpha}(X)$.

\begin{theorem}\label{completeness Calpha}
	The following statements are equivalent for the space $C_{\aleph_\alpha}(X)$.
	\begin{enumerate}
		\item $C_{\aleph_\alpha}(X)$ is completely metrizable.
		\item $C_{\aleph_\alpha}(X)$ is a \v{C}ech-complete space.
		\item $C_{\aleph_\alpha}(X)$ is locally \v{C}ech-complete.
		\item $C_{\aleph_\alpha}(X)$ is an open continuous image of a paracompact \v{C}ech-complete space.
		\item $C_{\aleph_\alpha}(X)$ is an open continuous image of a \v{C}ech-complete space.
		\item $C_{\aleph_\alpha}(X)$ is of pointwise countable type.
		\item $X$ is $\aleph_\alpha$-separable.
	\end{enumerate}
\end{theorem}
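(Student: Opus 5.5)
The plan is to lean on Theorem \ref{metrizability}, which already gives $(7)\Leftrightarrow(1)\Leftrightarrow(6)$: $\aleph_\alpha$-separability of $X$ is equivalent to complete metrizability of $C_{\aleph_\alpha}(X)$ and to its being of pointwise countable type. So it suffices to connect the remaining conditions $(2)$--$(5)$ to this cycle. The chain I aim for is
$$(1)\implies(2)\implies(3)\implies(6),\qquad (2)\implies(4)\implies(5)\implies(6).$$

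For the implications from $(1)$, I will use the standard facts that a completely metrizable space is \v{C}ech-complete and paracompact, and that it is an open continuous image of itself under the identity map. These give $(1)\implies(2)$ and $(2)\implies(4)$, the latter provided paracompactness is available. If $(2)$ is used without $(1)$, paracompactness is not automatic, but then I route through $(6)$ first: I prove $(2)\implies(3)\implies(6)\implies(1)$, and afterwards $(1)\implies(4)$. The implications $(2)\implies(3)$ and $(4)\implies(5)$ are trivial.

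For $(3)\implies(6)$, I will take a point $f$ with a \v{C}ech-complete neighborhood. Every \v{C}ech-complete space is of pointwise countable type, since $Y$ is a $G_\delta$ in $\beta Y$ and compact sets with countable outer base can be built inside it, as in Engelking. Pointwise countable type is inherited from a neighborhood (the subspace may be taken open up to closure), so $C_{\aleph_\alpha}(X)$ itself is of pointwise countable type at $f$. Homogeneity of the topological group $C_{\aleph_\alpha}(X)$ then lets me use the single point $\underline{0}$.

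The main obstacle is $(5)\implies(6)$. Here I will invoke the known theorem (Arhangel'skii/Engelking) that pointwise countable type is preserved under open continuous surjections, applied to the preimage of a point together with a compact set of countable character in the \v{C}ech-complete domain. If that result is unavailable, the fallback is to prove instead that an open image of a \v{C}ech-complete space is a $q$-space, which is more elementary. Pulling back the neighborhood sequence under an open map gives this: choose a preimage point $y$, take neighborhoods $U_n$ of $y$ witnessing the $q$-property in the domain, and use their images. Then $(5)$ of Theorem \ref{metrizability} yields $(1)$ of that theorem, which is $(7)$ here, closing the cycle.
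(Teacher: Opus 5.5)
Your proposal is correct and follows essentially the paper's route. Everything is funneled into pointwise countable type: \v{C}ech-complete spaces are of pointwise countable type, and this property passes to open continuous images. Theorem~\ref{metrizability} then closes the cycle, and $(1)\Rightarrow(4)$ comes from paracompactness of metric spaces via the identity map.

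There are two minor differences from the paper. First, you prove $(3)\Rightarrow(6)$ directly, which works because the \v{C}ech-complete neighbourhood may be replaced by its interior (open subspaces of \v{C}ech-complete spaces are \v{C}ech-complete); the paper instead goes $(3)\Rightarrow(5)$ via Engelking 3.12.19(d). Second, the open-map step you flag as the main obstacle is immediate: an outer base $\{W_n\}$ of $K$ maps to an outer base $\{p(W_n)\}$ of $p(K)$.
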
 
\begin{proof}
	Since a completely metrizable space is \v{C}ech-complete \cite[Theorem 4.3.26]{E TOPOLOGY1989}, $(1)\implies (2)$ follows. Also a locally \v{C}ech-complete space is an open continuous image of a \v{C}ech-complete space \cite[3.12.19(d)]{E TOPOLOGY1989}, $(3)\implies(5)$ follows.
	
	$(5)\implies(6)$: Let $C_{\aleph_\alpha}(X)$ is an open continuous image of a   \v{C}ech-complete space $Y$. So, $Y$ is a $G_\delta$-subset of $\beta Y$. Now, every compact space is of pointwise countable type and a $G_\delta$-subspace of a space of pointwise countable type is also of pointwise countable type. It follows that $Y$ is of pointwise countable type. Since the property of being a pointwise countable type space is preserved by an open continuous map, it follows that $C_{\aleph_\alpha}(X)$ is a space of pointwise countable type. Hence, by Theorem \ref{metrizability}, $C_{\aleph_\alpha}(X)$ is completely metrizable. $(2)\implies (3)$ and $(4)\implies(5)$ are trivial.
	
	$(6)\implies (7)\implies(1)$ follows from Theorem \ref{metrizability}.
	
	Now if $(5)$ is true, so also is therefore $(7)$. It follows follows from Theorem \ref{metrizability}, $C_{\aleph_\alpha}(X)$ is a complete metric space and every metric space is paracompact, the statement $(4)$ follows immediately.
\end{proof}
In considering $\aleph_\alpha=\aleph_1$, we establish a direct improvement of Proposition 4.1 in \cite{CRMR 2025}.

\begin{corollary}
		The following statements are equivalent for the space $C_s(X)$.
	\begin{enumerate}
		\item $C_{s}(X)$ is completely metrizable.
		\item $C_{s}(X)$ is a \v{C}ech-complete space.
		\item $C_{s}(X)$ is locally \v{C}ech-complete.
		\item $C_{s}(X)$ is an open continuous image of a paracompact \v{C}ech-complete space.
		\item $C_{s}(X)$ is an open continuous image of a \v{C}ech-complete space.
		\item $C_{s}(X)$ is of pointwise countable type.
		\item $X$ is separable.
	\end{enumerate}

\end{corollary}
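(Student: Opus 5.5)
The corollary is the case $\aleph_\alpha=\aleph_1$ of Theorem \ref{completeness Calpha}. The plan is to specialize that theorem and then translate each statement using two facts: $C_{\aleph_1}(X)=C_s(X)$, and $\aleph_1$-separability coincides with separability.

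First we note that $\mathcal{A}_{\aleph_1}$ is exactly the family of countable subsets of $X$. Hence the base $\mathcal{B}_{\aleph_1}$ coincides with the base generating $C_s(X)$ from \cite{PM 2013}, as recorded in the introduction. So $C_s(X)$ and $C_{\aleph_1}(X)$ are the same topological space, and statements (1)--(6) of the corollary are literally statements (1)--(6) of Theorem \ref{completeness Calpha} with $\aleph_\alpha=\aleph_1$.

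Second, by definition $X$ is $\aleph_1$-separable iff $d(X)<\aleph_1$. Since $d(X)\geq\aleph_0$ always, this means $d(X)=\aleph_0$, i.e.\ $X$ is separable, as remarked after the definition. Thus statement (7) of the corollary matches statement (7) of the theorem.

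With both translations in place, the equivalence of all seven statements follows directly from Theorem \ref{completeness Calpha}. Nothing here is a genuine obstacle; the only point needing care is the identification $C_s(X)=C_{\aleph_1}(X)$. That identification amounts to observing that the sets $\{A\subseteq X:|A|<\aleph_1\}$ are exactly the countable subsets of $X$.
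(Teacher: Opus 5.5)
Your proposal is correct and matches the paper's own (implicit) argument: the corollary is Theorem \ref{completeness Calpha} with $\aleph_\alpha=\aleph_1$, translated via $C_s(X)=C_{\aleph_1}(X)$ and the observation that $\aleph_1$-separability is the same as separability. Your two translation steps are exactly the identifications the paper records in the introduction and after the definition of $\aleph_\alpha$-separability.
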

\section{Relations between Cardinal functions}

We begin this section by providing the definitions for a few conditions, each of which is weaker than the first countability of a topological space. A space $Y$ is called a Fr\'{e}chet Urysohn space if whenever $A\subseteq Y$ and $y\in \overline{A}$, then there exists a sequence $\{y_n\}$ in $A$ such that $y=\lim y_n$. $Y$ is called a sequential space if whenever a subset $Y_0$ of $Y$ contains the limits of all convergent sequences lying in $Y_0$, then $Y_0$ is closed in $Y$. $Y$ is called countably tight if for each $A\subseteq Y$ and $y\in \overline{A}$, there exists a countable subset $B$ of $A$ such that $y\in \overline{B}$.

The tightness of a point $x$ in the space $X$ is denoted by $t(x,X)$ and defined by $$t(x,X)=\aleph_0+\min\{k:\text{ for all }Y\subseteq X\text{ with }x\in \overline{Y},\text{ there is }A\subseteq Y\text{ with }|A|\leq k\text{ and }x\in \overline{A}\}$$ { Tightness }of $X$ is denoted by $t(X)$ and defined by $$t(X)=\aleph_0+\sup\{t(x,X):x\in X\}$$ It is clear that a space $X$ is countably tight if and only if $t(X)=\aleph_0$.
\begin{theorem}
	If $t(C_{\aleph_\alpha}(X))< \aleph_\alpha$, then $X$ is $\aleph_\alpha$-separable.
\end{theorem}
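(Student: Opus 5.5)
We argue by contradiction: assume $t(C_{\aleph_\alpha}(X))<\aleph_\alpha$ while $X$ is not $\aleph_\alpha$-separable. Then no subset of $X$ of cardinality $<\aleph_\alpha$ is dense in $X$. The aim is to build a set $\mathcal{F}\subseteq C(X)$ that has $\underline{0}$ in its closure, yet no small subfamily of $\mathcal{F}$ does. The natural choice is
$$\mathcal{F}=\{f\in C(X): f(x)=1 \text{ for some } x\in X\ \text{(or simply } f\geq 1 \text{ somewhere)},\ \text{and } f|_{\overline{A}}\equiv 0 \text{ for some } A\in\mathcal{A}_{\aleph_\alpha}\},$$
or, more cleanly, the family of all $f\in C(X)$ with $\sup_{x\in X}|f(x)|\geq 1$. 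We will in fact use the family of functions that vanish on the closure of some $A\in\mathcal{A}_{\aleph_\alpha}$ and take the value $1$ at some point outside it.

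\textbf{Step 1: $\underline{0}\in\overline{\mathcal{F}}$.} Take a basic neighbourhood $B(\underline{0},A,\epsilon)$ with $A\in\mathcal{A}_{\aleph_\alpha}$. Since $X$ is not $\aleph_\alpha$-separable, $\overline{A}\neq X$, so we can pick $y\in X\setminus\overline{A}$. Tychonoffness gives $f$ with $f(\overline{A})=\{0\}$ and $f(y)=1$. Then $f\in\mathcal{F}\cap B(\underline{0},A,\epsilon)$. This is exactly the construction used in the proof of Theorem \ref{metrizability}, $(5)\implies(1)$.

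\textbf{Step 2: small subfamilies miss $\underline{0}$.} By the tightness hypothesis there is $\mathcal{G}\subseteq\mathcal{F}$ with $|\mathcal{G}|<\aleph_\alpha$ and $\underline{0}\in\overline{\mathcal{G}}$. For each $g\in\mathcal{G}$ fix a witness point $y_g$ with $g(y_g)=1$, and put $A=\{y_g:g\in\mathcal{G}\}$. Then $|A|\leq|\mathcal{G}|<\aleph_\alpha$, so $A\in\mathcal{A}_{\aleph_\alpha}$. The neighbourhood $B(\underline{0},A,\tfrac12)$ of $\underline{0}$ contains no $g\in\mathcal{G}$, because $|g(y_g)|=1>\tfrac12$. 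This contradicts $\underline{0}\in\overline{\mathcal{G}}$. So the real content of $\mathcal{F}$ is simply that each member attains absolute value $1$ at some point, and the vanishing condition is needed only to secure Step 1.

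\textbf{Cardinal subtlety.} We must check that $t<\aleph_\alpha$ supplies a subfamily of size strictly below $\aleph_\alpha$. The definition of tightness gives $|\mathcal{G}|\leq t(C_{\aleph_\alpha}(X))<\aleph_\alpha$, and the union of the chosen witness points has cardinality at most $|\mathcal{G}|$. This holds even when $\aleph_\alpha$ is singular, since we take only one point per function.

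No step looks like a real obstacle. The only care needed is to define $\mathcal{F}$ so that Step 1 uses the failure of $\aleph_\alpha$-separability and Step 2 uses only one witness point per function. Both are routine given the Tychonoff separation already exploited in Theorem \ref{metrizability}.
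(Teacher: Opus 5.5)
Your proof is correct and is essentially the paper's argument. The paper also picks, for each $A\in\mathcal{A}_{\aleph_\alpha}$, a point $p_A\notin\overline{A}$ and a function $f_A$ vanishing on $A$ with $f_A(p_A)=1$. It then uses tightness to get $\mathcal{S}$ with $|\mathcal{S}|<\aleph_\alpha$ and $\underline{0}\in\overline{\mathcal{S}}$, and takes the set of witness points $B$, so that $B(\underline{0},B,\frac{1}{2})$ misses $\mathcal{S}$. The only difference is cosmetic: the paper fixes one function per $A$, whereas you use the larger family of all such functions, which works equally well.
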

\begin{proof}
	If possible, let $X$ be not $\aleph_\alpha$-separable. Then for each $A\in \mathcal{A}_{\aleph_\alpha}$, we can pickup a point $p_A\in X\setminus \overline{A}$ and an $f_A\in C(X)$ such that $f_A(A)=\{0\}$ and $f_A(p_A)=1$. Let $\mathcal{F}=\{f_A:A\in A_{\aleph_\alpha}\}$. Then for each $A\in \mathcal{A}_{\aleph_\alpha}$ and $\epsilon>0$, $f_A\in B(\underline{0},A,\epsilon)\cap \mathcal{F}$. This implies that $\underline{0}\in \overline{\mathcal{F}}$. Since $t(C_{\aleph_\alpha}(X))<\aleph_\alpha$, there is a subset $\mathcal{S}$ of $\mathcal{F}$ such that $|\mathcal{S}|<\aleph_\alpha$ and $\underline{0}\in \overline{\mathcal{S}}$. Set $B=\{p_A:f_A\in \mathcal{S}\}$ and hence, $B\in \mathcal{A}_{\aleph_\alpha}$. But, $B(\underline{0},B,\frac{1}{2})\cap S=\emptyset$ - this is a contradiction.
\end{proof}
Setting $\aleph_\alpha=\aleph_1$, we have the following corollary, which recovers one part of Proposition 6.7 in \cite{RS 2021}.
\begin{corollary}
	If $C_s(X)$ is countably tight then $X$ is separable.
\end{corollary}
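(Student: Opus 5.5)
The statement is the corollary that countable tightness of $C_s(X)$ forces $X$ to be separable. I would deduce it directly from the preceding theorem by specializing to $\aleph_\alpha=\aleph_1$. Recall that $C_s(X)=C_{\aleph_1}(X)$. The hypothesis that $C_s(X)$ is countably tight means exactly $t(C_{\aleph_1}(X))=\aleph_0$, and $\aleph_0<\aleph_1$. So the hypothesis $t(C_{\aleph_\alpha}(X))<\aleph_\alpha$ of the preceding theorem holds with $\aleph_\alpha=\aleph_1$. That theorem then gives that $X$ is $\aleph_1$-separable, i.e. $d(X)<\aleph_1$, hence $d(X)=\aleph_0$. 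By the remark following the definition of $\aleph_\alpha$-separability, this is precisely separability of $X$.

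For completeness I would also note how the theorem's argument reads in this case. If $X$ is not separable, then for each countable $A\subseteq X$ one picks $p_A\notin\overline{A}$ and a function $f_A$ that vanishes on $A$ and satisfies $f_A(p_A)=1$. The family $\mathcal{F}=\{f_A\}$ has $\underline{0}$ in its closure in $C_s(X)$. Countable tightness then yields a countable subfamily $\mathcal{S}\subseteq\mathcal{F}$ with $\underline{0}\in\overline{\mathcal{S}}$. However, the countable set $B=\{p_A:f_A\in\mathcal{S}\}$ gives a neighborhood $B(\underline{0},B,\tfrac12)$ of $\underline{0}$ missing $\mathcal{S}$, which is a contradiction.

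There is no real obstacle. The only point needing care is the cardinal bookkeeping: countable tightness is $t=\aleph_0$, which is strictly below $\aleph_1$, and $\aleph_1$-separable coincides with separable.
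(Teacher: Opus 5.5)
Your proposal is correct and matches the paper: the corollary is obtained by specializing the preceding theorem (if $t(C_{\aleph_\alpha}(X))<\aleph_\alpha$ then $X$ is $\aleph_\alpha$-separable) to $\aleph_\alpha=\aleph_1$, using $C_s(X)=C_{\aleph_1}(X)$ and that $\aleph_1$-separable means separable. Your sketch of the underlying argument is also the paper's, with one small caveat that the paper shares: to make $B$ genuinely countable, pick a single index $A$ for each $g\in\mathcal{S}$ with $g=f_A$, rather than taking all $p_A$ with $f_A\in\mathcal{S}$.
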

\begin{corollary}\label{countable tight}
For a space $X$, the following statements are equivalent-
\begin{enumerate}
	\item $X$ is $\aleph_\alpha$-separable.
\item $C_{\aleph_\alpha}(X)=C_u(X)$.
\item $C_{\aleph_\alpha}(X)$ is metrizable by a complete metric.
\item $C_{\aleph_\alpha}(X)$ is first countable.
\item $C_{\aleph_\alpha}(X)$ is a Fr\'echet-Urysohn space. 
\item $C_{\aleph_\alpha}(X)$ is a sequential space.
\item $C_{\aleph_\alpha}(X)$ is countably tight.
	\end{enumerate}
\end{corollary}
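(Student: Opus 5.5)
The plan is to run a single cycle of implications that reuses Theorem \ref{metrizability} together with the preceding tightness theorem. Theorem \ref{metrizability} already gives the equivalence of (1), (2), (3) and (4), so I will keep that block as it is. It then suffices to show
\[
(4)\implies(5)\implies(6)\implies(7)\implies(1).
\]

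The first implications are the standard chain among generalized countability properties, so I only need to quote them. A first countable space is Fr\'echet--Urysohn: at a point $y\in\overline{A}$, take a decreasing countable base $\{U_n\}$ at $y$ and pick $y_n\in U_n\cap A$. A Fr\'echet--Urysohn space is sequential, since a sequentially closed set then contains every point of its closure. A sequential space is countably tight: for $A\subseteq Y$, the union of the closures of the countable subsets of $A$ is sequentially closed, hence closed, so it equals $\overline{A}$. This yields $(4)\implies(5)\implies(6)\implies(7)$.

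For $(7)\implies(1)$ I will invoke the preceding theorem, which says that $t(C_{\aleph_\alpha}(X))<\aleph_\alpha$ forces $X$ to be $\aleph_\alpha$-separable. Countable tightness means $t(C_{\aleph_\alpha}(X))=\aleph_0$. Under the standing assumption $\aleph_\alpha\geq\aleph_1$ from Section 3 on, this gives $\aleph_0<\aleph_\alpha$, so the theorem applies and $X$ is $\aleph_\alpha$-separable. That closes the cycle.

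I expect no real obstacle. The only point to check carefully is the hypothesis $\aleph_\alpha\geq\aleph_1$ in the last step, and it is needed: for $\aleph_\alpha=\aleph_0$ the space $C_p(X)$ can be countably tight without $X$ being finite, so the statement would fail there.
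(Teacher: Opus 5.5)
Your proposal is correct and follows essentially the route the paper intends: Theorem~\ref{metrizability} supplies (1)--(4), the standard chain gives (4)$\Rightarrow$(5)$\Rightarrow$(6)$\Rightarrow$(7), and the preceding tightness theorem, applied with $t(C_{\aleph_\alpha}(X))=\aleph_0<\aleph_\alpha$ under the standing assumption $\aleph_\alpha\geq\aleph_1$, closes the cycle. Since your cycle returns to (1) through tightness, you only actually use the forward implications (1)$\Rightarrow$(2)$\Rightarrow$(3)$\Rightarrow$(4) of Theorem~\ref{metrizability}.
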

\begin{remark}
	Equivalence of the statements (2) through (6) also follow from Theorem 3.14 in \cite{PM 2013}.
\end{remark}
\begin{corollary}
	Theorem 5.4 in \cite{PM 2013} follows immediately from Corollary \ref{countable tight} above with the choice $\aleph_\alpha=\aleph_1$.
\end{corollary}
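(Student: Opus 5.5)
The plan is to specialize Corollary \ref{countable tight} to $\aleph_\alpha=\aleph_1$ and then match the resulting list of conditions with the statement of Theorem 5.4 in \cite{PM 2013}. That theorem concerns $C_s(X)$: it asserts that metrizability, first countability, the Fr\'echet--Urysohn property, sequentiality and countable tightness of $C_s(X)$ are all equivalent to separability of $X$ (and to $C_s(X)=C_u(X)$).

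The first step is to record that $C_s(X)=C_{\aleph_1}(X)$, as noted in the Introduction. The sets in $\mathcal{A}_{\aleph_1}$ are exactly the countable subsets of $X$, and the bases $\mathcal{B}_{\aleph_1}$ and the base defining $C_s(X)$ in \cite{PM 2013} coincide.

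The second step is to observe that $X$ is $\aleph_1$-separable if and only if $d(X)<\aleph_1$, that is, $d(X)=\aleph_0$. This is precisely separability, as remarked right after the definition of $\aleph_\alpha$-separability.

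Substituting these two identifications into statements (1)--(7) of Corollary \ref{countable tight} gives the following. The space $X$ is separable iff $C_s(X)=C_u(X)$ iff $C_s(X)$ is completely metrizable iff it is first countable iff it is Fr\'echet--Urysohn iff it is sequential iff it is countably tight.

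Metrizability (without completeness) must also be placed in this chain. Complete metrizability gives metrizability, and metrizability gives first countability, so plain metrizability is squeezed between statements (3) and (4) and therefore joins the list.

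The only step needing care is this bookkeeping: making sure each condition in the cited Theorem 5.4 appears, directly or by such a squeeze, among the specialized conditions. The list above, together with metrizability, covers all of them, and this yields the corollary.
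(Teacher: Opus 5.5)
Your proposal is correct and follows the same route as the paper: specialize Corollary~\ref{countable tight} to $\aleph_\alpha=\aleph_1$, using $C_s(X)=C_{\aleph_1}(X)$ and the fact that $\aleph_1$-separability is ordinary separability. Your remark that plain metrizability sits between complete metrizability and first countability is exactly the bookkeeping the paper leaves implicit.
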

 Further, we improve Theorem 5.4 in \cite{PM 2013} and Theorem 5.4 in \cite{RS 2021}, see Theorem \ref{C_ all}.

We reproduce the following definitions of a few cardinal functions that we will require in the present section. For any point $x\in X$, $$\chi(x,X)=\aleph_0+\min\{|\mathcal{B}_x|:\mathcal{B}_x\text{ is an open local base for }X\text{ at the point }x\}$$ is called the character of $X$ at the point $x$ and $\chi(X)=\sup\{\chi(x,X):x\in X\}$ is called the character of the space $X$. It is easy to verify that for a topological group $G$, We have $\chi(G)=\chi(a,G)$ for any $a\in G$ and a space $X$ is first countable if and only if $\chi(X)=\aleph_0$.

 Let $\mathcal{V}$ be a collection of non-empty open 
sets in $X$. Then $\mathcal{V}$ is a local $\pi$-base for $x$ if for each open neighborhood 
$R$ of $x$, one has $V\subseteq R$ for some $V\in \mathcal{V}$. If in addition one has $x\in V$ for all 
$V\in \mathcal{V}$, then $\mathcal{V}$ is a local base at $x$. The {$\pi$-character }of $x$ in $X$ is denoted by $\pi\chi(x,X)$ and defined by 
$$\pi\chi(x, X)=\aleph_0+ \min\{|\mathcal{V}|: \mathcal{V}\text{ is a local }\pi\text{-base at }x\} ; $$
The {$\pi$-character} of $X$ is denoted by $\pi(X)$ and defined by $$\pi\chi(X)=\sup\{\pi\chi(x,X):x\in X\}.$$
$X$ is defined to have {countable $\pi$-character }if $\pi\chi(X)=\aleph_0$.

For $x\in X$, $$\psi(x,X)=\aleph_0+\min\{|\mathcal{U}|:\mathcal{U}\text{ is a family of open sets in }X\text{ such that }\bigcap\mathcal{U}=\{x\}\}$$ is called the pseudocharacter of $X$ at the point $x$ and $\psi(X)=\sup\{\psi(x,X):x\in X\}$ is called the pseudocharacter of $X$. A space $X$ is called an $E_0$-space if $\psi(X)=\aleph_0$.

The cardinal number $$c(X)=\aleph_0+\sup\{|\mathcal{U}|:\mathcal{U}\text{ is a family of pairwise disjoint non-empty open sets in }X\}$$ is called the Souslin number of $X$. $X$ is said to have the Souslin property if $c(X)=\aleph_0$. 

The cardinal $$w(X)=\aleph_0+\min\{|\mathcal{B}|:\mathcal{B}\text{ is an open base for }X\}$$ is called the weight of $X$. A space $X$ is second countable if and only if $w(X)=\aleph_0$.

The cardinal $$L(X)=\aleph_0+\min\{k:\text{ each open cover of }X\text{ has a subcover of cardinality}\leq k\} $$ is called the Lindel\"{o}f number of $X$. A space $X$ is Lindel\"{o}f if and only if $L(X)=\aleph_0$.

By $s(X)$ we mean the spread of $X$ and is defined by 
$$s(X)=\aleph_0+\min\{|D|:D\subseteq X\text{ and } D\text{ is discrete}\}$$
If $s(X)=\aleph_0$, we say $X$ has a countable spread.

Additionally, we introduce two new cardinal numbers in this section.
\begin{definition}
	We call a family $\mathcal{B}\subseteq \mathcal{A}_{\aleph_\alpha}$ an almost-$\aleph_\alpha$-cover of $X$ if $\overline{\bigcup\mathcal{B}}=X$. By $ac_{\aleph_\alpha}(X)$ we mean the almost-$\aleph_\alpha$-number of $X$ and define it as follows
	
	$$ac_{\aleph_\alpha}(X)=\aleph_0+\min\{|\mathcal{B}|:\mathcal{B} \text{ is an almost-}\aleph_\alpha\text{-cover of }X\}$$
\end{definition}

Since for any dense subset $D$ of $X$, $\{\{x\}:x\in D\}$ is an almost-$\aleph_\alpha$-cover of $X$, it follows that $ac_{\aleph_\alpha}(X)\leq d(X)$, for any transfinite cardinal number $\aleph_\alpha$. It is easy to verify that for any two transfinite cardinal numbers $\aleph_\alpha$ and $\aleph_\beta$ if $\aleph_\alpha\leq \aleph_\beta$, $ac_{\aleph_\beta}(X)\leq ac_{\aleph_\alpha}(X)$ 

\begin{definition}
	We call a family $\mathcal{B}\subseteq \mathcal{A}_{\aleph_\alpha}$ $\aleph_\alpha$-closed sets in $X$ if for given $A\in \mathcal{A}_{\aleph_\alpha}$, there exists $B\in\mathcal{B}$ such that $A\subseteq \overline{B}$. By $cc_{\aleph_\alpha}(X)$ we mean $\aleph_\alpha$-closed number of $X$ and define it as follows:
	
	$$cc_{\aleph_\alpha}(X)=\aleph_0+\min\{|\mathcal{B}|:\mathcal{B}\text{ is a }\aleph_\alpha\text{-closed sets in }X\}$$
\end{definition}
% Note that,  $cc_{\aleph_1}(X)$ coincides with $\varphi(X)$ introduced in \cite{RS 2021}. 
 \begin{theorem}
 	For any transfinite cardinal number $\aleph_\alpha>\aleph_0$, the following statements are equivalent for a space $X$.
 	\begin{enumerate}
 		\item $X$ is $\aleph_\alpha$-separable.
 		\item $cc_{\aleph_\alpha}(X)=\aleph_0$.
 		\item $ac_{\aleph_\alpha}(X)=\aleph_0$. 
 	\end{enumerate}
 \end{theorem}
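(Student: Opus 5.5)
The plan is to prove the cycle $(1)\Rightarrow(2)\Rightarrow(3)\Rightarrow(1)$ directly from the definitions of $d(X)$, $cc_{\aleph_\alpha}(X)$ and $ac_{\aleph_\alpha}(X)$. No results about $C_{\aleph_\alpha}(X)$ are needed.

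For $(1)\Rightarrow(2)$, take a dense $D\subseteq X$ with $|D|<\aleph_\alpha$, so $D\in\mathcal{A}_{\aleph_\alpha}$. The one-element family $\mathcal{B}=\{D\}$ is then $\aleph_\alpha$-closed, because every $A\in\mathcal{A}_{\aleph_\alpha}$ satisfies $A\subseteq X=\overline{D}$. Hence $cc_{\aleph_\alpha}(X)=\aleph_0$.

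For $(2)\Rightarrow(3)$, I will in fact show $ac_{\aleph_\alpha}(X)\leq cc_{\aleph_\alpha}(X)$ in general. Let $\mathcal{B}$ be an $\aleph_\alpha$-closed family and let $x\in X$. Since $\aleph_\alpha>\aleph_0$, the singleton $\{x\}$ lies in $\mathcal{A}_{\aleph_\alpha}$, so $x\in\overline{B}$ for some $B\in\mathcal{B}$. Therefore
\[
X=\bigcup_{B\in\mathcal{B}}\overline{B}\subseteq\overline{\textstyle\bigcup\mathcal{B}},
\]
so $\mathcal{B}$ is an almost-$\aleph_\alpha$-cover. A countable $\aleph_\alpha$-closed family thus gives a countable almost-$\aleph_\alpha$-cover.

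For $(3)\Rightarrow(1)$, let $\{B_n:n\in\mathbb{N}\}$ be a countable almost-$\aleph_\alpha$-cover (if the witnessing family is finite, repeat members). Then $D=\bigcup_n B_n$ is dense in $X$ and
\[
|D|\leq\aleph_0\cdot\sup_n|B_n|.
\]
It remains to show $|D|<\aleph_\alpha$, which is the main obstacle. Each $|B_n|<\aleph_\alpha$, and $\aleph_0<\aleph_\alpha$. When $\aleph_\alpha$ has uncountable cofinality (for instance when it is a successor cardinal, including $\aleph_1$), the supremum of countably many cardinals below $\aleph_\alpha$ is again below $\aleph_\alpha$, so $|D|<\aleph_\alpha$ and $X$ is $\aleph_\alpha$-separable.

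If $\operatorname{cf}(\aleph_\alpha)=\aleph_0$, for example $\aleph_\alpha=\aleph_\omega$, this step genuinely fails. Sets $B_n$ of size $\aleph_n$ can have a dense union of size $\aleph_\omega$ while $d(X)=\aleph_\omega$. So the proof will state that this case, like the $(5)\Rightarrow(1)$ step of Theorem \ref{metrizability} where $\bigcup_n A_n$ is used in the same way, requires $\operatorname{cf}(\aleph_\alpha)>\aleph_0$. I would make that standing assumption explicit at this point.
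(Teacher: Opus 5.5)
Your proof is correct, and you are right about cofinality. The paper's $(3)\Rightarrow(1)$ is exactly your union argument, but it simply asserts $|D|<\aleph_0\aleph_\alpha=\aleph_\alpha$, and that fails when $\mathrm{cf}(\aleph_\alpha)=\aleph_0$. A concrete witness: take a discrete space $X$ of cardinality $\aleph_\omega$ and write it as a union of pieces of size $\aleph_n$. Then $ac_{\aleph_\omega}(X)=\aleph_0$ but $d(X)=\aleph_\omega$, so (3) holds and (1) fails; you should name this space rather than describe it abstractly. Where your route genuinely differs from the paper is in how you leave (2). You show directly, via singletons, that every $\aleph_\alpha$-closed family is an almost-$\aleph_\alpha$-cover, i.e.\ $ac_{\aleph_\alpha}(X)\le cc_{\aleph_\alpha}(X)$. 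This is a more elementary proof of what the paper only obtains later as Corollary \ref{compare cc ac}, via $\psi\le\chi$ for $C_{\aleph_\alpha}(X)$. The paper instead proves $(2)\Rightarrow(1)$ by diagonalization. If $X$ is not $\aleph_\alpha$-separable and $B_n\in\mathcal{A}_{\aleph_\alpha}$ for $n\in\mathbb{N}$, no $B_n$ is dense, so pick $x_n\in X\setminus\overline{B_n}$; then $\{x_n:n\in\mathbb{N}\}\in\mathcal{A}_{\aleph_\alpha}$ lies in no $\overline{B_n}$. That argument needs only $\aleph_\alpha>\aleph_0$. Because your cycle sends $(2)\Rightarrow(1)$ through (3), your blanket hypothesis $\mathrm{cf}(\aleph_\alpha)>\aleph_0$ needlessly gives up $(1)\Leftrightarrow(2)$ at countable cofinality, where it still holds. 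Using both arguments gives the sharp version: $(1)\Leftrightarrow(2)\Rightarrow(3)$ for every uncountable $\aleph_\alpha$, and $(3)\Rightarrow(1)$ holds for all spaces precisely when $\mathrm{cf}(\aleph_\alpha)>\aleph_0$.
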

\begin{proof}
$(1)\implies (2),(3)$:	If $X$ is $\aleph_\alpha$-separable with a subset $D$ of $X$ such that $\overline{D}=X$ and $|D|< \aleph_\alpha$, then $\{D\}$ is a $\aleph_\alpha$-closed family in $X$ which is also an almost-$\aleph_\alpha$-cover of $X$. Consequently, $cc_{\aleph_\alpha}(X)=\aleph_0=ac_{\aleph_\alpha}(X)$. 

$(2)\implies(1)$: Let $X$ be non $\aleph_\alpha$-separable. Let $\mathcal{B}=\{B_n:n\in \mathbb{N}\}$ be a subfamily of $\aleph_\alpha$-closed sets in $X$. Then for each $n\in \mathbb{N}$, we can choose a point $x_n\in X\setminus\overline{B_n}$. Let $A=\{x_1,x_2,\cdots\}$. Then $A\in \mathcal{A}_{\aleph_\alpha}$ but there does not exist any $n\in \mathbb{N}$ such that $A\subseteq \overline{ B_n}$. Hence, $\mathcal{B}$ cannot be a subfamily of $\aleph_\alpha$-closed sets in $X$ and accordingly $cc_{\aleph_\alpha}(X)>\aleph_0$.

$(3)\implies (1)$:  Let $\mathcal{B}=\{B_n:n\in \mathbb{N}\}$ be an almost-$\aleph_\alpha$-cover of $X$. Then $D=\bigcup \mathcal{B}$ is dense in $X$. Note that, $|D|< \aleph_0 \aleph_\alpha=\aleph_\alpha$. Therefore, $d(X)<\aleph_\alpha$ and hence, $X$ is $\aleph_\alpha$-separable.
\end{proof}

The following theorem gives another representation of density for a space $X$.
\begin{theorem}
For a space $X$, we have $d(X)=ac_{\aleph_1}(X)$.
\end{theorem}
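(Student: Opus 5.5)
We prove the two inequalities $ac_{\aleph_1}(X)\leq d(X)$ and $d(X)\leq ac_{\aleph_1}(X)$. Recall that $\mathcal{A}_{\aleph_1}$ is the family of all countable subsets of $X$, finite ones included.

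The first inequality was already observed right after the definition of the almost-$\aleph_\alpha$-number. Let $D$ be a dense subset of $X$ with $|D|+\aleph_0=d(X)$. Then $\{\{x\}:x\in D\}$ is a subfamily of $\mathcal{A}_{\aleph_1}$ whose union $D$ is dense. Hence it is an almost-$\aleph_1$-cover, and $ac_{\aleph_1}(X)\leq \aleph_0+|D|=d(X)$.

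For the reverse inequality, let $\mathcal{B}\subseteq\mathcal{A}_{\aleph_1}$ be an almost-$\aleph_1$-cover with $\aleph_0+|\mathcal{B}|=ac_{\aleph_1}(X)$. By definition $D=\bigcup\mathcal{B}$ is dense in $X$. Each member of $\mathcal{B}$ has cardinality at most $\aleph_0$, so cardinal arithmetic gives
\[
|D|\leq |\mathcal{B}|\cdot\aleph_0 \leq \aleph_0+|\mathcal{B}| = ac_{\aleph_1}(X).
\]
When $\mathcal{B}$ is infinite this uses $\kappa\cdot\aleph_0=\kappa$ for infinite $\kappa$. When $\mathcal{B}$ is finite, $D$ is countable and the bound $\aleph_0$ still holds. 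Therefore $d(X)=\aleph_0+\min\{|D'|:D' \text{ dense}\}\leq \aleph_0+|D|\leq ac_{\aleph_1}(X)$.

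Combining the two inequalities gives $d(X)=ac_{\aleph_1}(X)$. The only point needing care is the cardinal arithmetic bound on $|\bigcup\mathcal{B}|$, including the finite and countable cases. These are handled by the $\aleph_0+{}$ normalization built into both cardinal functions. This step uses the axiom of choice, in the form that a union of $\kappa$ countable sets has cardinality at most $\kappa\cdot\aleph_0$. The argument extends to $\aleph_\alpha$ only as the inequality $ac_{\aleph_\alpha}(X)\leq d(X)\leq ac_{\aleph_\alpha}(X)\cdot\aleph_\alpha$. Equality is specific to $\aleph_1$ because $\aleph_\alpha$-small sets are then countable.
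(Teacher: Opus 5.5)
Your proof is correct and follows the same route as the paper. The singletons of a dense set give $ac_{\aleph_1}(X)\leq d(X)$, and the union of a minimal almost-$\aleph_1$-cover $\mathcal{B}$ is a dense set of size at most $|\mathcal{B}|\cdot\aleph_0\leq ac_{\aleph_1}(X)$; your treatment of the $\aleph_0+{}$ normalization and the finite case is a bit more careful than the paper's, which simply takes $|\mathcal{B}|=ac_{\aleph_1}(X)$.
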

\begin{proof}
	We already observed that $ac_{\aleph_\alpha}(X)\leq d(X)$ for any transfinite cardinal number $\aleph_\alpha$. So, in particular, $ac_{\aleph_1}(X)\leq d(X)$. Let $\mathcal{B}$ be an almost-$\aleph_1$-cover of $X$ such that $|\mathcal{B}|=ac_{\aleph_1}(X)$. Then $D=\bigcup \mathcal{B}$ is dense in $X$. Now $|D|\leq ac_{\aleph_1}(X)$ and hence, $d(X)\leq ac_{\aleph_1}(X)$.
\end{proof}
\begin{remark}
	For a space $X$, we have $d(X)=ac_{\aleph_1}(X)\geq ac_{\aleph_2}(X)\geq \cdots $.
\end{remark}
\begin{example}
	Let $X$ be a discrete space with $|X|=\aleph_1$. Then $ac_{\aleph_1}(X)=d(X)=\aleph_1$. But, $ac_{\aleph_2}(X)=ac_{\aleph_3}(X)=\cdots=\aleph_0$.
\end{example}
\begin{theorem}\label{cel X}
	For a space $X$, $$c(X)\leq ac_{\aleph_\alpha}(X)\aleph_\alpha.$$
\end{theorem}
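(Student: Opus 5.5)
The bound follows from the standard fact that the Souslin number of a space is at most its density, applied through a dense set built from an almost-$\aleph_\alpha$-cover of minimum size.

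Fix an almost-$\aleph_\alpha$-cover $\mathcal{B}$ of $X$ with $|\mathcal{B}|\leq ac_{\aleph_\alpha}(X)$. Such a family exists by the definition of $ac_{\aleph_\alpha}(X)$, the extra $\aleph_0$ only enlarging the right-hand side. Put $D=\bigcup\mathcal{B}$. By definition $\overline{D}=X$, so $D$ is dense in $X$.

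Next estimate $|D|$. Each $B\in\mathcal{B}$ lies in $\mathcal{A}_{\aleph_\alpha}$, so $|B|<\aleph_\alpha$, and in particular $|B|\leq\aleph_\alpha$. Cardinal arithmetic for unions then gives
$$|D|\leq \sum_{B\in\mathcal{B}}|B|\leq |\mathcal{B}|\cdot\aleph_\alpha\leq ac_{\aleph_\alpha}(X)\,\aleph_\alpha.$$
This is the same estimate used in the proof of $(3)\implies(1)$ of the preceding theorem.

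Now let $\mathcal{U}$ be any family of pairwise disjoint non-empty open sets in $X$. Since $D$ is dense, each $U\in\mathcal{U}$ meets $D$, so we may choose a point $d_U\in U\cap D$. Pairwise disjointness makes $U\mapsto d_U$ injective, hence $|\mathcal{U}|\leq |D|$. Taking the supremum over all such $\mathcal{U}$, and noting that the right-hand side is already at least $\aleph_0$, we get
$$c(X)\leq \aleph_0+|D|\leq ac_{\aleph_\alpha}(X)\,\aleph_\alpha.$$

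No step here is a real obstacle. The only point needing care is that each member of $\mathcal{B}$ has cardinality at most $\aleph_\alpha$, rather than strictly less, so the product $|\mathcal{B}|\cdot\aleph_\alpha$ bounds the union even when $\mathcal{B}$ is finite or countable. The argument shows in fact that $d(X)\leq ac_{\aleph_\alpha}(X)\aleph_\alpha$, and the inequality $c(X)\leq d(X)$ then finishes it.
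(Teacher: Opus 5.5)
Your proof is correct and is essentially the paper's argument. The paper fixes a minimal almost-$\aleph_\alpha$-cover $\mathcal{A}$, sorts the members of a cellular family according to which $A\in\mathcal{A}$ they meet, and bounds each such subfamily by $|A|$ using disjointness; this is exactly your injection $U\mapsto d_U$ into the dense set $\bigcup\mathcal{A}$, done one $A$ at a time rather than routed through $c(X)\le d(X)$.
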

\begin{proof}
	Let $\mathcal{A}$ be an almost-$\aleph_\alpha$-cover of $X$ such that $|\mathcal{A}|+\aleph_0=ac_{\aleph_\alpha}(X)$ and $\mathcal{B}$ be a cellular family of open sets in $X$. Enough to show that $|\mathcal{B}|\leq |\mathcal{A}| \aleph_\alpha$. For each $A\in \mathcal{A}$, consider the set $\mathcal{M}_A=\{B\in \mathcal{B}:B\cap A\neq \emptyset\}$. It is easy to check that $\mathcal{B}=\bigcup\limits_{A\in \mathcal{A}} \mathcal{M}_A$. Also note that, $\mathcal{B}$ is a cellular family of open sets and hence $|\mathcal{M}_A|\leq |A|<\aleph_\alpha$. Consequently, $|\mathcal{B}|\leq |\mathcal{A}|\aleph_\alpha$.
\end{proof}

\begin{theorem}\label{pseudocharacter Calpha}
	For any space $X$, $\psi(C_{\aleph_\alpha}(X))=ac_{\aleph_\alpha}(X).$
\end{theorem}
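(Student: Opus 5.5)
Since $C_{\aleph_\alpha}(X)$ is a topological group, it is homogeneous, so $\psi(C_{\aleph_\alpha}(X))=\psi(\underline{0},C_{\aleph_\alpha}(X))$. We prove the two inequalities at the point $\underline{0}$, working with the basic neighbourhoods $B(\underline{0},A,\epsilon)$, $A\in\mathcal{A}_{\aleph_\alpha}$. Throughout we use that $X$ is Tychonoff, which is justified by Theorem \ref{tychonoff calpha}.

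\emph{Step 1: $\psi\leq ac_{\aleph_\alpha}(X)$.} Let $\mathcal{B}$ be an almost-$\aleph_\alpha$-cover of $X$ with $|\mathcal{B}|+\aleph_0=ac_{\aleph_\alpha}(X)$. Consider the family
$$\mathcal{U}=\{B(\underline{0},A,1/n):A\in\mathcal{B},\ n\in\mathbb{N}\},$$
which consists of open sets containing $\underline{0}$ and satisfies $|\mathcal{U}|\leq|\mathcal{B}|\aleph_0$. Suppose $g\in\bigcap\mathcal{U}$. Then $g$ vanishes on every $A\in\mathcal{B}$, hence on the set $\bigcup\mathcal{B}$, which is dense in $X$. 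By continuity $g=\underline{0}$. Thus $\bigcap\mathcal{U}=\{\underline{0}\}$, and so $\psi(\underline{0})\leq ac_{\aleph_\alpha}(X)$.

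\emph{Step 2: $ac_{\aleph_\alpha}(X)\leq\psi$.} Let $\mathcal{U}$ be a family of open sets with $\bigcap\mathcal{U}=\{\underline{0}\}$ and $|\mathcal{U}|\leq\psi(\underline{0})$; we may assume $\mathcal{U}$ is infinite. For each $U\in\mathcal{U}$ choose a basic neighbourhood $B(\underline{0},A_U,\epsilon_U)\subseteq U$ with $A_U\in\mathcal{A}_{\aleph_\alpha}$. Then
$$\bigcap_{U\in\mathcal{U}}B(\underline{0},A_U,\epsilon_U)=\{\underline{0}\}.$$
We claim that $\mathcal{B}=\{A_U:U\in\mathcal{U}\}$ is an almost-$\aleph_\alpha$-cover of $X$. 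Suppose not, and pick $x\in X\setminus\overline{\bigcup\mathcal{B}}$. By the Tychonoff property there is $f\in C(X)$ with $f(x)=1$ and $f\equiv0$ on $\overline{\bigcup\mathcal{B}}$. This $f$ is a nonzero function lying in every $B(\underline{0},A_U,\epsilon_U)$, which is a contradiction. Hence
$$ac_{\aleph_\alpha}(X)\leq|\mathcal{B}|+\aleph_0\leq\psi(\underline{0}).$$

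The argument is routine. The only care needed is the $\aleph_0$ bookkeeping in both definitions, namely that the factor $\aleph_0$ coming from the $1/n$'s is absorbed because both cardinals are defined with $+\aleph_0$. The step that actually uses the structure is Step 2, where the Tychonoff separation of a point from a closed set is applied.
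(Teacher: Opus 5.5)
Your proof is correct and is essentially the paper's argument. The paper likewise gets $ac_{\aleph_\alpha}(X)\leq\psi(C_{\aleph_\alpha}(X))$ by showing, via a Tychonoff separating function, that the sets $A_\lambda$ indexing basic neighbourhoods $B(\underline{0},A_\lambda,\epsilon_\lambda)$ with intersection $\{\underline{0}\}$ form an almost-$\aleph_\alpha$-cover, and gets the reverse inequality from the family $\{B(\underline{0},A,\frac{1}{n}):A\in\mathcal{B},\ n\in\mathbb{N}\}$, with an explicit $\epsilon$-argument in place of your ``a continuous function vanishing on a dense set is $\underline{0}$''.

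One small correction concerns your justification of the Tychonoff assumption. It must be taken as the paper's standing hypothesis, not derived from Theorem \ref{tychonoff calpha}. Passing to the Tychonoff quotient $Y$ preserves $C_{\aleph_\alpha}(X)$ but not $ac_{\aleph_\alpha}(X)$. For example, with the co-countable topology on an uncountable set, $C_{\aleph_1}(X)\cong\mathbb{R}$ has $\psi=\aleph_0$ while $ac_{\aleph_1}(X)=\aleph_1$, so the identity genuinely needs $X$ Tychonoff.
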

\begin{proof}

	Let $\mathcal{B}=\{B(\underline{0},A_\lambda,\epsilon_\lambda):A_\lambda\in \mathcal{A}_{\aleph_\alpha},\epsilon_\lambda>0 \text{ and }\lambda\in \Lambda\}$ be a family of basic open sets in $C_{\aleph_\alpha}(X)$ such that $\bigcap\mathcal{B}=\{\underline{0}\}$ and $|\Lambda|=\psi(C_{\aleph_\alpha}(X))$. We assert that $\{A_\lambda:\lambda\in \Lambda\}$ is an almost-$\aleph_\alpha$-cover of $X$. If possible, let there exist a point $x\in X\setminus \overline{\bigcup \mathcal{B}}$. Using the Tychonoffness of $X$, we can find out an $f\in C(X)$ such that $f(x)=1$ and $f(\overline{\bigcup \mathcal{B}})=\{0\}$. It is clear that $f\in \bigcap\mathcal{B}$ but $f\neq \underline{0}$, a contradiction. Consequently, $\{A_\lambda:\lambda\in \Lambda\}$ is an almost-$\aleph_\alpha$-cover of $X$ and hence, $ac_{\aleph_\alpha}(X)\leq \psi(C_{\aleph_\alpha}(X))$.

	Let $\mathcal{B}$ be an almost-$\aleph_\alpha$-cover of $X$ such that $|\mathcal{B}|+\aleph_0=ac_{\aleph_\alpha}(X)$. Enough to show that $\{\underline{0}\}=D\equiv\bigcap\{B(\underline{0},A,
\frac{1}{n}):n\in \mathbb{N} \text{ and }A\in \mathcal{B}\}$. If possible, let $f\in D$ such that $f\neq \underline{0}$. Then there exists an $x\in X$ such that $f(x)\neq 0$. Without loss of generality, we can assume $f(x)>0$. Choose an $\epsilon>0$ such that $f(x)-\epsilon>0$ and an $n\in \mathbb{N}$ such that $\frac{1}{n}<f(x)-\epsilon$. Since $f$ is continuous at $x$, we can find an open set $U$ containing $x$ in $X$ such that $f(U)\subseteq (f(x)-\epsilon,f(x)+\epsilon)$. Since $\mathcal{B}$ is an almost-$\aleph_\alpha$-cover of $X$, this implies that $\bigcup \mathcal{B}\cap U\neq \emptyset$. Choose a point $q\in \bigcup \mathcal{B}\cap U$. Since $q\in \bigcup \mathcal{B}$, we can find an $A\in \mathcal{B}$ such that $q\in A$. Therefore, $f(q)<\frac{1}{n+2}$ as $f\in D\subseteq B(\underline{0}, A,\frac{1}{n+2})$. But, $q\in U$ implies that $\frac{1}{n}<f(x)-\epsilon<f(q)$ - which is a contradiction. Consequently, $\{\underline{0}\}=D$ and hence, $\psi(C_{\aleph_\alpha}(X))\leq ac_{\aleph_\alpha}(X)$.
\end{proof}

\begin{corollary}
	For a space $X$, we have $\psi(C_s(X))=ac_1(X)=d(X)$.
\end{corollary}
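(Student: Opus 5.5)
This is a direct specialization of Theorem \ref{pseudocharacter Calpha} to $\aleph_\alpha=\aleph_1$, combined with the earlier theorem identifying $d(X)$ with $ac_{\aleph_1}(X)$. I read the subscript in $ac_1(X)$ as $ac_{\aleph_1}(X)$. Recall also that $C_s(X)=C_{\aleph_1}(X)$ by definition: $\mathcal{A}_{\aleph_1}$ is exactly the family of countable subsets of $X$, and these generate the topology of $C_s(X)$ from \cite{PM 2013}.

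The argument has two steps.

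\textbf{Step 1.} Apply Theorem \ref{pseudocharacter Calpha} with $\aleph_\alpha=\aleph_1$. This gives
\[
\psi(C_s(X))=\psi(C_{\aleph_1}(X))=ac_{\aleph_1}(X).
\]
That theorem was proved for an arbitrary transfinite cardinal, so no further hypothesis needs checking. The only ingredient there was the Tychonoffness of $X$, which Theorem \ref{tychonoff calpha} justifies assuming throughout.

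\textbf{Step 2.} Invoke the theorem $d(X)=ac_{\aleph_1}(X)$. Its two inequalities are as follows. First, $ac_{\aleph_1}(X)\le d(X)$, because the singletons of a dense set form an almost-$\aleph_1$-cover. Second, the union of an almost-$\aleph_1$-cover $\mathcal{B}$ is a dense set of cardinality at most $|\mathcal{B}|\cdot\aleph_0$. The $\aleph_0$ summand in both definitions absorbs this countable factor.

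Chaining the two equalities yields $\psi(C_s(X))=ac_{\aleph_1}(X)=d(X)$. There is no genuine obstacle here. The only point needing care is the cardinal arithmetic in Step 2, where the product $|\mathcal{B}|\cdot\aleph_0$ must collapse to $ac_{\aleph_1}(X)$. This holds precisely because each member of $\mathcal{B}$ is countable and $ac_{\aleph_1}(X)$ is at least $\aleph_0$ by definition.
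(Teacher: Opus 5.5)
Your argument is the paper's own: specialize Theorem \ref{pseudocharacter Calpha} to $\aleph_\alpha=\aleph_1$, then chain it with the theorem $d(X)=ac_{\aleph_1}(X)$. One caveat: Theorem \ref{tychonoff calpha} does not justify the Tychonoff assumption here, because replacing $X$ by $Y$ preserves $C_s$ but not $d$ (an uncountable co-countable space has $C_s(X)\cong\mathbb{R}$ but $d(X)=\aleph_1$), so the equality rests on the paper's standing hypothesis that $X$ is Tychonoff.
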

\begin{corollary} \label{E_0 }
	For a space $X$, the following statements are equivalent.
	\begin{enumerate}
		\item $C_{\aleph_\alpha}(X)$ is metrizable.
		\item $C_{\aleph_\alpha}(X)$ is submetrizable (a topological space $(Y,\tau)$ is called submetrizable if one can define a metric $d$ on $Y$ such that the topology induced by $d$ on $Y$ is weaker than the topology $\tau$ on it. It is easy to check that submetrizable spaces are $E_0$).
		\item $C_{\aleph_\alpha}(X)$ is an $E_0$-space.
		\item $X$ is $\aleph_\alpha$-separable.
	\end{enumerate}
\end{corollary}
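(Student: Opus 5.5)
We prove the cycle $(4)\Rightarrow(1)\Rightarrow(2)\Rightarrow(3)\Rightarrow(4)$, relying on Theorem \ref{metrizability} and Theorem \ref{pseudocharacter Calpha}.

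For $(4)\Rightarrow(1)$, assume $X$ is $\aleph_\alpha$-separable. Then Theorem \ref{metrizability} gives that $C_{\aleph_\alpha}(X)=C_u(X)$ and that this space is metrizable by a complete metric. In particular it is metrizable.

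For $(1)\Rightarrow(2)$, a metrizable space is trivially submetrizable: take the metric that induces the topology itself. For $(2)\Rightarrow(3)$, let $d$ be a metric on $C(X)$ whose topology is coarser than that of $C_{\aleph_\alpha}(X)$. For each $f$, the balls $\{g:d(f,g)<1/n\}$, $n\in\mathbb{N}$, are open in the metric topology and hence open in $C_{\aleph_\alpha}(X)$. Their intersection is $\{f\}$, so $\psi(f,C_{\aleph_\alpha}(X))=\aleph_0$ at every point, and the space is $E_0$.

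For $(3)\Rightarrow(4)$, the $E_0$ hypothesis says $\psi(C_{\aleph_\alpha}(X))=\aleph_0$. By Theorem \ref{pseudocharacter Calpha} this gives $ac_{\aleph_\alpha}(X)=\aleph_0$. The preceding theorem, which shows that $ac_{\aleph_\alpha}(X)=\aleph_0$ is equivalent to $\aleph_\alpha$-separability for $\aleph_\alpha>\aleph_0$, then yields (4).

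That last step needs $\aleph_\alpha\geq\aleph_1$, which is the standing assumption of the paper since Section 3. The direct argument is also short. An almost-$\aleph_\alpha$-cover $\{A_n:n\in\mathbb{N}\}$ has union $D$ that is dense in $X$ and satisfies $|D|\leq\aleph_0\cdot\sup_n|A_n|$. Since $\aleph_\alpha$ is an uncountable initial ordinal and each $|A_n|<\aleph_\alpha$, we get $|D|<\aleph_\alpha$. This is where care is needed: when $\aleph_\alpha$ is singular of countable cofinality, the supremum of the $|A_n|$ could equal $\aleph_\alpha$. I would either restrict to regular (or uncountable cofinality) $\aleph_\alpha$, or simply cite the earlier theorem as stated. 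This cardinal-arithmetic point is the only real obstacle; every other step follows immediately from earlier results.
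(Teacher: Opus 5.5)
Your cycle is the route the paper intends. The corollary is stated there without proof, as a consequence of Theorem~\ref{pseudocharacter Calpha}, the preceding characterization ``$ac_{\aleph_\alpha}(X)=\aleph_0$ iff $X$ is $\aleph_\alpha$-separable'', and Theorem~\ref{metrizability}. Your steps $(4)\Rightarrow(1)\Rightarrow(2)\Rightarrow(3)$ are correct for every $\aleph_\alpha\geq\aleph_1$.

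The genuine gap is $(3)\Rightarrow(4)$, and your fallback ``simply cite the earlier theorem as stated'' does not work. That theorem's proof uses exactly the false step $|D|<\aleph_0\aleph_\alpha=\aleph_\alpha$. When $\mathrm{cf}(\aleph_\alpha)=\omega$, both that theorem and this corollary are false. So the obstacle you flagged is a counterexample, not a technicality.

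Take $X$ discrete with $|X|=\aleph_\omega$, and write $X=\bigcup_n A_n$ with $A_1\subseteq A_2\subseteq\cdots$ and $|A_n|=\aleph_n$. Define
\[
\rho(f,g)=\sum_{n\in\mathbb{N}}2^{-n}\min\Bigl\{1,\sup_{x\in A_n}|f(x)-g(x)|\Bigr\}.
\]
This is a metric on $C(X)$; it separates points because $\bigcup_n A_n=X$. Its $\epsilon$-ball about $f$ contains $B(f,A_N,\epsilon/2)$ whenever $2^{-N}\leq\epsilon/2$. Hence $C_{\aleph_\omega}(X)$ is submetrizable, and so $E_0$.

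Yet $d(X)=\aleph_\omega$, so (4) fails. Statement (1) fails as well. Given any $F_n\in\mathcal{A}_{\aleph_\omega}$, $n\in\mathbb{N}$, pick $x_n\in X\setminus F_n$. The function equal to $2$ at $x_n$ and $0$ elsewhere lies in $B(\underline{0},F_n,\epsilon)$ but not in $B(\underline{0},\{x_k:k\in\mathbb{N}\},1)$. So no countable family of basic neighbourhoods is a local base at $\underline{0}$, and $C_{\aleph_\omega}(X)$ is not first countable.

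You therefore have to commit to the hypothesis $\mathrm{cf}(\aleph_\alpha)>\omega$, for instance $\aleph_\alpha$ regular, which covers $C_s(X)$. Under that hypothesis your direct count $|D|\leq\aleph_0\cdot\sup_n|A_n|<\aleph_\alpha$ completes the proof.

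Without it the statement splits. When $\mathrm{cf}(\aleph_\alpha)=\omega$, (2) and (3) are equivalent to $d(X)\leq\aleph_\alpha$, by the same metric built on a dense set written as a countable union. Meanwhile (1) and (4) remain equivalent to each other, via first countability and $cc_{\aleph_\alpha}(X)$, whose characterization has no cofinality issue.
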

 Thus the next theorem is an immediate consequence of Theorem \ref{metrizability}, Corollary \ref{countable tight} and Corollary \ref{E_0 }. This result extends the scope of Theorem 5.4 in \cite{RS 2021}.

\begin{theorem}\label{C_ all}
	For a space $X$, the following statements are equivalent-
	\begin{enumerate}
		\item $X$ is separable.
		\item $C_s(X)=C_u(X)$.
		\item $C_s(X)$ is metrizable.
		\item $C_{s}(X)$ is first countable.
		\item $C_{s}(X)$ is a Fr\'echet-Urysohn space. 
		\item $C_{s}(X)$ is a sequential space.
		\item $C_{s}(X)$ is countably tight.
		\item $C_s(X)$ is a q-space.
		\item $C_s(X)$ is of pointwise countable type.
		\item $C_s(X)$ is submetrizable.
	\item $C_s(X)$ is an $E_0$-space.

	\end{enumerate}\end{theorem}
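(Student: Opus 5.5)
The plan is to specialize the three earlier equivalence results to $\aleph_\alpha=\aleph_1$ and link them all through condition (1). Throughout I use $C_s(X)=C_{\aleph_1}(X)$, and the fact noted after the definition of $\aleph_\alpha$-separability that $X$ is separable if and only if it is $\aleph_1$-separable. With these, each earlier theorem, read at $\aleph_1$, becomes a list of conditions on $C_s(X)$, each equivalent to separability of $X$.

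First, I invoke Theorem \ref{metrizability} with $\aleph_\alpha=\aleph_1$. This gives that (1), (2) and (4) are equivalent, and also that (1) is equivalent to complete metrizability of $C_s(X)$, to $C_s(X)$ being a $q$-space (8), and to $C_s(X)$ being of pointwise countable type (9). The only slight mismatch is statement (3), which asks for plain metrizability rather than complete metrizability. I handle it by a chain of implications: complete metrizability gives (3), and (3) trivially gives (4), which is already equivalent to (1). Alternatively, (3) gives (10), which is handled in the last step.

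Second, Corollary \ref{countable tight} at $\aleph_1$ adds (5), (6) and (7) to the equivalence class of (1). If one wants to avoid relying on that corollary's proof, there is a direct route. First countability implies Fréchet–Urysohn, which implies sequential, which implies countably tight, so (4) gives (5), then (6), then (7). For the return, countable tightness means $t(C_s(X))=\aleph_0<\aleph_1$. The theorem preceding that corollary then makes $X$ $\aleph_1$-separable, i.e. separable, which is (1).

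Third, Corollary \ref{E_0 } at $\aleph_1$ gives that (3), (10) and (11) are each equivalent to (1). Underneath this is Theorem \ref{pseudocharacter Calpha}: $\psi(C_s(X))=ac_{\aleph_1}(X)=d(X)$, so $C_s(X)$ is $E_0$ exactly when $d(X)=\aleph_0$. In addition, metrizable implies submetrizable, which implies $E_0$.

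Since every item has been shown equivalent to (1), the theorem follows. The whole argument is bookkeeping, and no step is a genuine obstacle. The one point needing care is that separability coincides with $\aleph_1$-separability, i.e. $d(X)<\aleph_1$ if and only if $d(X)=\aleph_0$. This holds because $d(X)$ is by definition at least $\aleph_0$.
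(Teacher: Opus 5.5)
Your proposal is correct and follows the paper's own route. The paper obtains this theorem as an immediate consequence of Theorem~\ref{metrizability}, the countable-tightness corollary and the $E_0$ corollary, each specialized to $\aleph_\alpha=\aleph_1$ using $C_s(X)=C_{\aleph_1}(X)$ and the fact that separability coincides with $\aleph_1$-separability. Your extra bookkeeping is sound and only makes explicit what the paper leaves implicit: routing plain metrizability through first countability, and the chain first countable $\Rightarrow$ Fr\'echet--Urysohn $\Rightarrow$ sequential $\Rightarrow$ countably tight, closed off by the tightness theorem.
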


\begin{lemma}
	For a space $X$ and a transfinite cardinal number $\aleph_\alpha$, the collection $\mathcal{C}_{\aleph_\alpha}=\{V[A,\epsilon]:A\in \mathcal{A}_{\aleph_\alpha}\text{ and }\epsilon>0\text{ a real number}\}$ forms a base for some uniformity on $C(X)$, where $V[A,\epsilon]=\{(f,g):|f(x)-g(x)|<\epsilon\text{ for all }x\in A\}$. Furthermore, the topology $\tau(\mathcal{C}_{\aleph_\alpha})$induced from this uniformity coincides with the topology of $C_{\aleph_\alpha}(X)$.
\end{lemma}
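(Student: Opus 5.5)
The plan is to check the axioms for a base of a uniformity directly, and then compare neighbourhood systems. For a base $\mathcal{C}$ of a uniformity on $C(X)$ we need four things:
\begin{enumerate}
\item each member contains the diagonal $\Delta=\{(f,f):f\in C(X)\}$;
\item the intersection of two members contains a member;
\item each member contains the inverse of some member;
\item for each member $V$ there is a member $W$ with $W\circ W\subseteq V$.
\end{enumerate}
Condition (1) holds because $|f(x)-f(x)|=0<\epsilon$ for every $x$. Condition (3) holds with equality, since $V[A,\epsilon]$ is symmetric: the defining condition $|f(x)-g(x)|<\epsilon$ is symmetric in $f$ and $g$.

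For (2), take $V[A_1,\epsilon_1]$ and $V[A_2,\epsilon_2]$. We use the fact that $\mathcal{A}_{\aleph_\alpha}$ is closed under finite unions: $|A_1\cup A_2|\le |A_1|+|A_2|<\aleph_\alpha$, because $\aleph_\alpha$ is infinite. Then $V[A_1\cup A_2,\min\{\epsilon_1,\epsilon_2\}]\subseteq V[A_1,\epsilon_1]\cap V[A_2,\epsilon_2]$. For (4), given $V[A,\epsilon]$, take $W=V[A,\epsilon/2]$. If $(f,g),(g,h)\in W$, the triangle inequality pointwise on $A$ gives $|f(x)-h(x)|<\epsilon$ for all $x\in A$, so $W\circ W\subseteq V[A,\epsilon]$.

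For the topology, recall that the uniform topology has neighbourhood base at $f$ given by the sections $V[A,\epsilon][f]=\{g:|f(x)-g(x)|<\epsilon \ \forall x\in A\}$. We compare these with the paper's basic sets $B(f,A,\epsilon)$, which are defined with a supremum. The inclusion $B(f,A,\epsilon)\subseteq V[A,\epsilon][f]$ is clear. In the other direction, $V[A,\epsilon/2][f]\subseteq B(f,A,\epsilon)$, because the pointwise bound $<\epsilon/2$ gives a supremum of at most $\epsilon/2<\epsilon$. So at every $f$, each neighbourhood in one system contains a neighbourhood from the other. The uniform topology and the $\aleph_\alpha$-topology therefore have the same neighbourhood filters, hence the same open sets.

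The only point needing care is this strict-versus-supremum discrepancy. A pointwise strict inequality does not imply a strict supremum inequality when $A$ is infinite, and halving $\epsilon$ handles it. Apart from that, everything is routine verification.
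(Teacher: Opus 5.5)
Your proof is correct. The paper states this lemma without any proof, so there is no argument of the authors' to compare against. What you give is the standard verification the paper leaves implicit:
\begin{itemize}
\item the base axioms, using that $\mathcal{A}_{\aleph_\alpha}$ is closed under finite unions and halving $\epsilon$ for the composition axiom;
\item the two inclusions $B(f,A,\epsilon)\subseteq V[A,\epsilon][f]$ and $V[A,\epsilon/2][f]\subseteq B(f,A,\epsilon)$, which correctly handle the gap between a strict pointwise bound and a strict bound on the supremum.
\end{itemize}

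Two small additions would make the argument airtight.

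First, the paper later uses that a $T_2$ uniform space with a countable base is metrizable, and it speaks of uniform weight for Tychonoff spaces. If ``uniformity'' is meant in the separated sense (as in Engelking), you should also note that $\bigcap\mathcal{C}_{\aleph_\alpha}=\Delta$. This follows at once from the singletons $A=\{x\}\in\mathcal{A}_{\aleph_\alpha}$.

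Second, your last step assumes that, for fixed $f$, the sets $B(f,A,\epsilon)$ form a neighbourhood base at $f$ in $C_{\aleph_\alpha}(X)$. That is slightly more than $\mathcal{B}_{\aleph_\alpha}$ being a base, but it is easy to check. If $f\in B(h,A,\epsilon)$, set $\delta=\epsilon-\sup_{x\in A}|h(x)-f(x)|>0$. The triangle inequality on suprema then gives $B(f,A,\delta)\subseteq B(h,A,\epsilon)$.
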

\begin{definition}
	
	For a Tychonoff space $X$, the uniform weight is the least infinite cardinality of some compatible uniformity base on $X$. The uniform weight is denoted by $uw(X)$.

\end{definition}
\begin{theorem}\label{character Calpha}
	For a space $X$, $$\pi\chi(C_{\aleph_\alpha}(X))=\chi(C_{\aleph_\alpha}(X))=uw(C_{\aleph_\alpha}(X))=cc_{\aleph_\alpha}(X).$$
\end{theorem}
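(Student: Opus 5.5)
We prove the chain of inequalities
\[
cc_{\aleph_\alpha}(X)\leq \pi\chi(C_{\aleph_\alpha}(X))\leq \chi(C_{\aleph_\alpha}(X))\leq uw(C_{\aleph_\alpha}(X))\leq cc_{\aleph_\alpha}(X).
\]
Since $C_{\aleph_\alpha}(X)$ is a topological group, it is homogeneous, so $\chi$ and $\pi\chi$ may be computed at $\underline{0}$. The middle inequality $\pi\chi\leq\chi$ is immediate, because every local base is a local $\pi$-base. For $\chi\leq uw$, take any compatible uniformity base $\mathcal{U}$. Then $\{U[\underline{0}]:U\in\mathcal{U}\}$ is a local base at $\underline{0}$, giving $\chi(\underline 0)\leq|\mathcal{U}|$.

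For $uw\leq cc_{\aleph_\alpha}(X)$, fix an $\aleph_\alpha$-closed family $\mathcal{B}$ of minimal size. By the preceding Lemma, $\{V[B,1/n]:B\in\mathcal{B},n\in\mathbb{N}\}$ is a subfamily of $\mathcal{C}_{\aleph_\alpha}$ of cardinality at most $|\mathcal{B}|\aleph_0$. We show it is still a base for that uniformity. Given $V[A,\epsilon]$, choose $B\in\mathcal{B}$ with $A\subseteq\overline{B}$ and $1/n<\epsilon$. If $|f-g|<1/n$ on $B$, then by continuity $|f-g|\leq 1/n<\epsilon$ on $\overline B\supseteq A$. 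Hence $V[B,1/n]\subseteq V[A,\epsilon]$.

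The main obstacle is the first inequality, $cc_{\aleph_\alpha}(X)\leq\pi\chi(C_{\aleph_\alpha}(X))$. Let $\mathcal{V}$ be a local $\pi$-base at $\underline 0$ of size $\kappa=\pi\chi$. We may shrink each member to a basic set $B(f_V,A_V,\epsilon_V)\subseteq V$; this does not increase the cardinality, and the shrunken family is still a local $\pi$-base. The claim is that $\{A_V:V\in\mathcal V\}$ is $\aleph_\alpha$-closed.

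Let $A\in\mathcal{A}_{\aleph_\alpha}$. Since $B(\underline0,A,1)$ is a neighbourhood of $\underline 0$, some $B(f_V,A_V,\epsilon_V)\subseteq B(\underline 0,A,1)$. Suppose, for contradiction, that some $x\in A\setminus\overline{A_V}$. By Tychonoffness choose $h\in C(X)$ with $h(\overline{A_V})=\{0\}$ and $h(x)=|f_V(x)|+2$. Then $f_V+h\in B(f_V,A_V,\epsilon_V)$, yet $|f_V(x)+h(x)|\geq 2>1$, so $f_V+h\notin B(\underline 0,A,1)$. This is a contradiction, so $A\subseteq\overline{A_V}$. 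Hence $cc_{\aleph_\alpha}(X)\leq\aleph_0+\kappa=\pi\chi$, which closes the chain and gives all four equalities.
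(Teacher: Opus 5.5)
Your proof is correct, and it differs from the paper's in how $\pi\chi$ enters the chain. The paper proves $\chi\le uw\le cc_{\aleph_\alpha}(X)\le\chi$. For the last step it takes a local base at $\underline{0}$ of size $\chi$ consisting of sets $B(\underline{0},F_\lambda,\epsilon_\lambda)$, and shows that $\{F_\lambda\}$ is $\aleph_\alpha$-closed using the same Tychonoff separation you use. It then obtains $\pi\chi=\chi$ only by citing the general theorem that $\pi$-character equals character in every topological group.

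You instead prove $cc_{\aleph_\alpha}(X)\le\pi\chi$ directly, which closes the cycle without that external result. The price is that members of a local $\pi$-base need not contain $\underline{0}$. Your perturbation $f_V+h$, with $h$ vanishing on $\overline{A_V}$ and $h(x)=|f_V(x)|+2$, handles exactly this. In effect your argument reproves $\pi\chi=\chi$ for $C_{\aleph_\alpha}(X)$ from homogeneity alone, so it is self-contained. The paper's version is marginally simpler at that step because its basic sets are centred at $\underline{0}$.

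Your $uw\le cc_{\aleph_\alpha}(X)$ step is the paper's argument, with the ``easy to verify'' cofinality check actually carried out.

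One cosmetic point: in $\chi\le uw$ the sets $U[\underline{0}]$ are neighbourhoods of $\underline{0}$ but need not be open. Pass to their interiors to get an open local base of the same cardinality, as the definition of $\chi$ requires.
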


\begin{proof}
	
	The inequality $\pi\chi(Y)\leq \chi(Y)\leq uw(Y)$ holds for any Tychonoff space $Y$. Moreover, Like any topological group, $\pi\chi(C_{\aleph_\alpha}(X))=\chi(C_{\aleph_\alpha}(X))$\cite[Theorem 3.6]{C HANDBOOK1984}.

		Let $\mathcal{B}$ be an $\aleph_\alpha$-closed family of sets in $X$ such that $|\mathcal{B}|+\aleph_0=cc_{\aleph_\alpha}(X)$. It is easy to verify that $\mathcal{F}=\{V(B,\frac{1}{n}):B\in \mathcal{B}\text{ and }n\in \mathbb{N}\}$ is a compatible uniformity base for $C_{\aleph_\alpha}(X)$ and hence, $uw(C_{\aleph_\alpha}(X))\leq cc_{\aleph_\alpha}(X)$. It suffices to show that $cc_{\aleph_\alpha}(X)\leq \chi(C_{\aleph_\alpha}(X))$. Let $\mathcal{B}=\{B(\underline{0},F_\alpha,\epsilon_\alpha):\epsilon_\alpha>0,F_\alpha\in\mathcal{A}_{\aleph_\alpha},\alpha\in \Lambda\}$ be a local base at $\underline{0}$ in $C_{\aleph_\alpha}(X)$ with $|\mathcal{B}|=\chi(C_{\aleph_\alpha}(X))$. Then it is not hard to prove that $\mathcal{F}=\{F_\alpha:\alpha\in \Lambda\}$ is an $\aleph_\alpha$-closed family of sets in $X$, indeed for any $A\in \mathcal{A}_{\aleph_\alpha}$, there exists an $\alpha\in \Lambda$ such that $B(\underline{0},F_\alpha,\epsilon_\alpha)\subseteq B(\underline{0},A,1)$. We assert that $A\subseteq \overline{F_\alpha}$, for if this is not true, then there exists a point $x\in A$ such that $x\notin \overline{F_\alpha}$. Using the Tychonoffness of $X$, one can produce an $f\in C(X)$ such that $f(\overline{F_\alpha})=\{0\}$ but $f(x)=2$. It follows that $f\in B(\underline{0},F_\alpha,\epsilon_\alpha)$ but $f\notin B(\underline{0},A,1)$ - a contradiction. Hence, $cc_{\aleph_\alpha}(X)\leq \chi(C_{\aleph_\alpha}(X))$.

\end{proof}
As we know a $T_2$ uniform space is metrizable if and only if its uniformity has a countable base. The previous theorem gives an alternative proof of the following corollary.
\begin{corollary}
	For any transfinite cardinal number $\aleph_\alpha$, $C_{\aleph_\alpha}(X)$ is metrizable if and only if $C_{\aleph_\alpha}(X)$ has countable $\pi$-character if and only if $X$ is $\aleph_\alpha$-separable.
\end{corollary}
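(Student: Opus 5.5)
The corollary follows by chaining three earlier results: Theorem \ref{character Calpha}, the equivalence theorem characterizing $\aleph_\alpha$-separability by $cc_{\aleph_\alpha}(X)=\aleph_0$, and the fact that a $T_2$ uniform space is metrizable exactly when its uniformity has a countable base. The plan is a cycle of implications: metrizable $\Rightarrow$ countable $\pi$-character $\Rightarrow$ $\aleph_\alpha$-separable $\Rightarrow$ metrizable.

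\textbf{Metrizable implies countable $\pi$-character.} A metrizable space is first countable, so $\chi(C_{\aleph_\alpha}(X))=\aleph_0$. Since $\pi\chi(Y)\le\chi(Y)$ for every space $Y$, we get $\pi\chi(C_{\aleph_\alpha}(X))=\aleph_0$.

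\textbf{Countable $\pi$-character implies $\aleph_\alpha$-separable.} Theorem \ref{character Calpha} gives $cc_{\aleph_\alpha}(X)=\pi\chi(C_{\aleph_\alpha}(X))=\aleph_0$. The equivalence theorem for $\aleph_\alpha>\aleph_0$ then says $cc_{\aleph_\alpha}(X)=\aleph_0$ exactly when $X$ is $\aleph_\alpha$-separable. In the degenerate case $\aleph_\alpha=\aleph_0$, where $\mathcal{A}_{\aleph_0}$ consists of finite sets, we instead note directly that $C_p(X)$ with countable character forces $X$ to be countable. This case is not really needed, since the paper assumes $\aleph_\alpha\ge\aleph_1$ from Section 3 onward.

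\textbf{$\aleph_\alpha$-separable implies metrizable.} Again by the equivalence theorem, $cc_{\aleph_\alpha}(X)=\aleph_0$, so Theorem \ref{character Calpha} gives $uw(C_{\aleph_\alpha}(X))=\aleph_0$. Concretely, the uniformity of the preceding lemma has a countable base: with $D$ dense and $|D|<\aleph_\alpha$, the family $\{V[D,\tfrac1n]:n\in\mathbb{N}\}$ works. This uniformity is $T_2$, because $\bigcap_n V[D,\tfrac1n]$ is the diagonal when $D$ is dense and functions are continuous. The metrization criterion for uniform spaces with a countable base then makes $C_{\aleph_\alpha}(X)$ metrizable. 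This recovers Theorem \ref{metrizability} by a different route, which is the point of the corollary.

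\textbf{Main obstacle.} The only non-routine point is checking that the topology generated by the uniformity is really the topology of $C_{\aleph_\alpha}(X)$, so that metrizability of the uniform space transfers. That is exactly the content of the preceding lemma, so we cite it.
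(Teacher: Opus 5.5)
Your argument is correct and is essentially the paper's own: Theorem \ref{character Calpha} ($\pi\chi=\chi=uw=cc_{\aleph_\alpha}(X)$), combined with the characterization of $\aleph_\alpha$-separability by $cc_{\aleph_\alpha}(X)=\aleph_0$ and the countable-base metrization criterion for $T_2$ uniformities. The one thing to fix is your aside on $\aleph_\alpha=\aleph_0$: since $d(X)\ge\aleph_0$ by definition, no space is $\aleph_0$-separable, yet $C_p(X)$ is metrizable for countable $X$, so the statement is false in that case and you should simply invoke the standing assumption $\aleph_\alpha\ge\aleph_1$ rather than suggest the case can be handled directly.
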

With the choice $\aleph_\alpha=\aleph_1$, we have an improvement of Proposition 5.7 (2) in \cite{PM 2013}.

\begin{corollary}
	For a space $X$, we have $\pi \chi(C_s(X))=\chi(C_s(X))=uw(C_s(X))=cc_{\aleph_1}(X)=\varphi(X)$.
\end{corollary}
Since for any $T_1$ space $X$,  $\psi(X)\leq \chi(X)$ \cite[3.1.F(a)]{E TOPOLOGY1989} holds. This fact leads us to the following corollaries.
\begin{corollary}\label{compare cc ac}
	For any transfinite cardinal number $\aleph_\alpha$ and for a space $X$, we have $ac_{\aleph_\alpha}(X)\leq cc_{\aleph_\alpha}(X).$
\end{corollary}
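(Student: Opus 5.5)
The plan is to chain two results already established in the paper. By Theorem \ref{pseudocharacter Calpha}, $\psi(C_{\aleph_\alpha}(X))=ac_{\aleph_\alpha}(X)$. By Theorem \ref{character Calpha}, $\chi(C_{\aleph_\alpha}(X))=cc_{\aleph_\alpha}(X)$. So the inequality $ac_{\aleph_\alpha}(X)\leq cc_{\aleph_\alpha}(X)$ becomes $\psi(C_{\aleph_\alpha}(X))\leq \chi(C_{\aleph_\alpha}(X))$. This holds for every $T_1$ space \cite[3.1.F(a)]{E TOPOLOGY1989}.

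It remains to check that $C_{\aleph_\alpha}(X)$ is $T_1$. For $f\neq g$, pick $x$ with $f(x)\neq g(x)$. Then $B(f,\{x\},|f(x)-g(x)|)$ is an open neighbourhood of $f$ that misses $g$, since $\{x\}\in\mathcal{A}_{\aleph_\alpha}$.

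There is one subtlety: pseudocharacter is attained pointwise, whereas the character of a topological group equals its character at any point. I would therefore compare the two at $\underline{0}$. Any local base $\mathcal{U}$ at $\underline{0}$ satisfies $\bigcap\mathcal{U}=\{\underline{0}\}$ by the $T_1$ property, so $\psi(\underline{0},C_{\aleph_\alpha}(X))\leq\chi(\underline{0},C_{\aleph_\alpha}(X))$. Homogeneity of the topological group then gives the global inequality.

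As a check independent of the function space, one can argue directly on $X$. Let $\mathcal{B}$ be an $\aleph_\alpha$-closed family. Every singleton $\{x\}$ lies in $\mathcal{A}_{\aleph_\alpha}$, so $x\in\overline{B}$ for some $B\in\mathcal{B}$. Hence $X=\bigcup_{B\in\mathcal{B}}\overline{B}\subseteq\overline{\bigcup\mathcal{B}}$, so $\mathcal{B}$ is itself an almost-$\aleph_\alpha$-cover. This already gives the inequality, and I would mention it as the elementary route. There is no real obstacle; the only point needing care is the $T_1$ verification above.
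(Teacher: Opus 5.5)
Your main route is the same as the paper's. There the corollary is obtained from $\psi(C_{\aleph_\alpha}(X))=ac_{\aleph_\alpha}(X)$ (Theorem~\ref{pseudocharacter Calpha}), $\chi(C_{\aleph_\alpha}(X))=cc_{\aleph_\alpha}(X)$ (Theorem~\ref{character Calpha}) and the fact that $\psi\leq\chi$ in $T_1$ spaces. Your $T_1$ check is correct. The detour through homogeneity is harmless but unnecessary, because $\psi(f,Y)\leq\chi(f,Y)$ holds at every point of a $T_1$ space and passes to suprema.

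Your ``elementary route'' is a genuinely different argument, and it is also correct. Every singleton lies in $\mathcal{A}_{\aleph_\alpha}$, so each $\aleph_\alpha$-closed family $\mathcal{B}$ satisfies $X=\bigcup_{B\in\mathcal{B}}\overline{B}\subseteq\overline{\bigcup\mathcal{B}}$. Hence $\mathcal{B}$ is already an almost-$\aleph_\alpha$-cover. The minimum defining $ac_{\aleph_\alpha}(X)$ is therefore taken over a larger collection of families than the one defining $cc_{\aleph_\alpha}(X)$, which gives the inequality.

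This direct proof is the more economical one:
\begin{itemize}
\item it is a statement purely about $X$;
\item it needs no separation axiom;
\item it does not depend on the two theorems about $C_{\aleph_\alpha}(X)$, whose proofs use the Tychonoff property of $X$.
\end{itemize}
The paper's route buys only the presentation of the inequality as a by-product of identifying $\psi$ and $\chi$ of the function space.
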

\begin{corollary}
	For a space $X$, we have $d(X)=ac_1(X)\leq cc_1(X)$.
\end{corollary}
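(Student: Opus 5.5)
The statement $d(X)=ac_1(X)\leq cc_1(X)$ splits into an equality and an inequality, and each comes from earlier results specialised to $\aleph_\alpha=\aleph_1$.

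For the equality, I will quote the theorem proved above stating that $d(X)=ac_{\aleph_1}(X)$. Here $ac_1$ is just the paper's shorthand for $ac_{\aleph_1}$ (and likewise $cc_1$ for $cc_{\aleph_1}$). That theorem follows the same pattern as before: a dense set $D$ gives the almost-$\aleph_1$-cover $\{\{x\}:x\in D\}$, so $ac_{\aleph_1}(X)\leq d(X)$. Conversely, the union of an almost-$\aleph_1$-cover consisting of countable sets is dense and has cardinality at most $|\mathcal{B}|\cdot\aleph_0$, so $d(X)\leq ac_{\aleph_1}(X)$.

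For the inequality, I will apply Corollary \ref{compare cc ac} with $\aleph_\alpha=\aleph_1$, which gives $ac_{\aleph_1}(X)\leq cc_{\aleph_1}(X)$. That corollary rests on two earlier results. First, Theorem \ref{pseudocharacter Calpha} and Theorem \ref{character Calpha} identify $ac_{\aleph_1}(X)=\psi(C_{\aleph_1}(X))$ and $cc_{\aleph_1}(X)=\chi(C_{\aleph_1}(X))$. Second, $\psi(Y)\leq\chi(Y)$ holds for any $T_1$ space $Y$, and $C_{\aleph_1}(X)$ is Hausdorff because it is finer than $C_p(X)$.

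The one point to check is that these identifications are valid when $X$ is Tychonoff, and that standing assumption is exactly the one the paper uses (justified by Theorem \ref{tychonoff calpha}). If one prefers not to pass through function spaces, there is also a direct combinatorial argument. Take an $\aleph_1$-closed family $\mathcal{B}$. Every singleton $\{x\}$ is countable, so it lies in $\overline{B}$ for some $B\in\mathcal{B}$. Hence $\overline{\bigcup\mathcal{B}}=X$, which makes $\mathcal{B}$ an almost-$\aleph_1$-cover, and therefore $ac_1(X)\leq cc_1(X)$.

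The argument has no real obstacle, since it only assembles results already proved. Combining the two parts gives $d(X)=ac_1(X)\leq cc_1(X)$.
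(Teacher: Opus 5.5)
Your proposal is correct and follows the paper's route. The equality is the earlier theorem $d(X)=ac_{\aleph_1}(X)$, and the inequality is Corollary~\ref{compare cc ac} at $\aleph_\alpha=\aleph_1$, which comes from Theorems~\ref{pseudocharacter Calpha} and~\ref{character Calpha} together with $\psi\leq\chi$ for $T_1$ spaces. Your supplementary direct argument is also valid: every singleton lies in $\mathcal{A}_{\aleph_1}$, so any $\aleph_1$-closed family is already an almost-$\aleph_1$-cover. It works verbatim for every $\aleph_\alpha$ and avoids both the function-space detour and any Tychonoff hypothesis.
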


The next corollary follows immediately from Theorem \ref{cel X} and Corollary \ref{compare cc ac}.
\begin{corollary}
	For a space $X$, we have $$c(C_{\aleph_\alpha}(X))\leq cc_{\aleph_\alpha}(X)\aleph_\alpha.$$
\end{corollary}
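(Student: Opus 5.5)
The plan follows the route the paper indicates: chain Theorem \ref{cel X} with Corollary \ref{compare cc ac}. Theorem \ref{cel X} gives $c(X)\le ac_{\aleph_\alpha}(X)\,\aleph_\alpha$ for every space $X$. Corollary \ref{compare cc ac} gives $ac_{\aleph_\alpha}(X)\le cc_{\aleph_\alpha}(X)$; it comes from Theorem \ref{pseudocharacter Calpha}, Theorem \ref{character Calpha} and $\psi\le\chi$. Multiplying the second inequality by $\aleph_\alpha$ and combining with the first yields $c(X)\le cc_{\aleph_\alpha}(X)\,\aleph_\alpha$.

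The statement, however, concerns the cellularity of $C_{\aleph_\alpha}(X)$ itself, so a second step must carry the bound from $X$ to the function space. I would begin with an arbitrary cellular family $\mathcal{U}$ of non-empty open sets in $C_{\aleph_\alpha}(X)$. Shrinking each member, I may assume $\mathcal{U}=\{B(f_i,A_i,\epsilon_i):i\in I\}$ with $A_i\in\mathcal{A}_{\aleph_\alpha}$.

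Next I fix an $\aleph_\alpha$-closed family $\mathcal{B}$ with $|\mathcal{B}|+\aleph_0=cc_{\aleph_\alpha}(X)$. For each $i$ I choose $B_i\in\mathcal{B}$ with $A_i\subseteq\overline{B_i}$. By continuity, $B(f_i,B_i,\epsilon_i/2)\subseteq\overline{B(f_i,A_i,\epsilon_i)}$ in the appropriate sense, so the sets indexed by a fixed $B\in\mathcal{B}$ can be compared on the single set $B$. This partitions $I$ into $|\mathcal{B}|$ classes. It then suffices to bound each class $I_B=\{i:B_i=B\}$ by $\aleph_\alpha$.

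For $i\ne j$ in $I_B$, disjointness of $B(f_i,A_i,\epsilon_i)$ and $B(f_j,A_j,\epsilon_j)$ forces the restrictions $f_i|_{A_i\cup A_j}$ and $f_j|_{A_i\cup A_j}$ to be uniformly separated somewhere on $\overline{B}$. The idea is to code each $i\in I_B$ by data supported on $B$, where $|B|<\aleph_\alpha$. Concretely, I would use the values of $f_i$ on $B$, rounded to a rational grid of mesh $\epsilon_i/4$, with the $\epsilon_i$ first reduced to $1/n$ so there are only countably many scales. Distinct members of a cellular family should then receive distinct codes.

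The main obstacle is this counting step. The number of codes is $\aleph_0^{|B|}$, which is $2^{<\aleph_\alpha}$-sized rather than $\aleph_\alpha$-sized, so a naive coding only gives $|I_B|\le 2^{<\aleph_\alpha}$. To reach $\aleph_\alpha$ I would try to exploit that two basic sets sharing the same support $B$ and scale $1/n$ can be disjoint only if their centres differ by at least $1/n$ at some point of $B$. I would then attempt a $\Delta$-system or pressing-down argument on the finitely-supported witnesses of separation.

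If this cannot be pushed through, I expect the estimate genuinely to concern $c(X)$, in which case the chain in the first paragraph is the whole proof. I would check that reading against the $\mathbb{N}$ case, where $C_{\aleph_1}(\mathbb{N})=C_u(\mathbb{N})$, before committing.
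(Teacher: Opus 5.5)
Your first paragraph is exactly the paper's argument. The paper's whole justification for this corollary is the chain Theorem~\ref{cel X} plus Corollary~\ref{compare cc ac}, and that chain yields $c(X)\le cc_{\aleph_\alpha}(X)\,\aleph_\alpha$. This bounds the cellularity of $X$, not of $C_{\aleph_\alpha}(X)$. (Corollary~\ref{compare cc ac} is also immediate directly: every singleton lies in $\mathcal{A}_{\aleph_\alpha}$, so an $\aleph_\alpha$-closed family $\mathcal{B}$ satisfies $X=\bigcup_{B\in\mathcal{B}}\overline{B}$ and is therefore an almost-$\aleph_\alpha$-cover.) So the $c(C_{\aleph_\alpha}(X))$ in the display has to be read as $c(X)$. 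With that reading, your first paragraph is already the complete proof, and you should stop there.

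The transfer step you then attempt cannot be completed, because the inequality for the function space is false whenever $\mathfrak{c}>\aleph_1$. Carry out the $\mathbb{N}$ check you proposed, with $X=\mathbb{N}$ and $\aleph_\alpha=\aleph_1$. The family $\{\mathbb{N}\}$ is $\aleph_1$-closed, so $cc_{\aleph_1}(\mathbb{N})\,\aleph_1=\aleph_1$. But $\mathbb{N}\in\mathcal{A}_{\aleph_1}$, so for the indicator functions $\chi_S$ of subsets $S\subseteq\mathbb{N}$, the sets $B(\chi_S,\mathbb{N},\frac{1}{2})$ are non-empty open subsets of $C_{\aleph_1}(\mathbb{N})$. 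They are pairwise disjoint because $\sup_{\mathbb{N}}|\chi_S-\chi_T|=1$ for $S\neq T$, so $c(C_{\aleph_1}(\mathbb{N}))\ge\mathfrak{c}$. Hence no $\Delta$-system or pressing-down refinement can improve your $2^{<\aleph_\alpha}$ count to $\aleph_\alpha$: that count is actually attained here.

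What your coding argument does correctly establish is $c(C_{\aleph_\alpha}(X))\le cc_{\aleph_\alpha}(X)\cdot 2^{<\aleph_\alpha}$. Two details make it work. First, replace each $B(f_i,A_i,\epsilon_i)$ by $B(f_i,B_i,\frac{1}{n_i})$ with $\frac{1}{n_i}\le\epsilon_i$; this lies inside the original set outright, with no closure needed, because $\sup_{\overline{B_i}}|f-g|=\sup_{B_i}|f-g|$. Second, disjoint balls with common support $B$ and radius $\frac{1}{n}$ have centres at sup-distance at least $\frac{2}{n}$ on $B$. This bound coincides with the displayed one under GCH, so the literal statement is independent of ZFC rather than provable.
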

\begin{theorem}
	For a space $X$, we have $$d(X)\leq cc_{\aleph_\alpha}(X)\aleph_\alpha.$$
\end{theorem}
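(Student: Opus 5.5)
The plan is to pass through $ac_{\aleph_\alpha}(X)$, combining Corollary \ref{compare cc ac} with an elementary bound of the density by the almost-$\aleph_\alpha$-number. Concretely, I would first prove
$$d(X)\leq ac_{\aleph_\alpha}(X)\,\aleph_\alpha .$$
Then Corollary \ref{compare cc ac}, which gives $ac_{\aleph_\alpha}(X)\leq cc_{\aleph_\alpha}(X)$, yields $d(X)\leq cc_{\aleph_\alpha}(X)\aleph_\alpha$ at once.

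For the first inequality, choose an almost-$\aleph_\alpha$-cover $\mathcal{B}\subseteq\mathcal{A}_{\aleph_\alpha}$ with $|\mathcal{B}|+\aleph_0=ac_{\aleph_\alpha}(X)$. Put $D=\bigcup\mathcal{B}$. By the definition of an almost-$\aleph_\alpha$-cover, $\overline{D}=X$, so $D$ is dense. Each $B\in\mathcal{B}$ satisfies $|B|<\aleph_\alpha$, so cardinal arithmetic gives
$$|D|\leq |\mathcal{B}|\cdot\aleph_\alpha\leq ac_{\aleph_\alpha}(X)\,\aleph_\alpha .$$
Hence $d(X)\leq \aleph_0+|D|\leq ac_{\aleph_\alpha}(X)\aleph_\alpha$, since $\aleph_\alpha$ is infinite. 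This is the same counting already used in the proof of $(3)\implies(1)$ of the equivalence theorem for $\aleph_\alpha$-separability.

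Alternatively, I can bypass Corollary \ref{compare cc ac} and argue directly from an $\aleph_\alpha$-closed family $\mathcal{C}$ with $|\mathcal{C}|+\aleph_0=cc_{\aleph_\alpha}(X)$, taking $D=\bigcup\mathcal{C}$. For every $x\in X$ the singleton $\{x\}$ lies in $\mathcal{A}_{\aleph_\alpha}$, so $x\in\overline{C}$ for some $C\in\mathcal{C}$. Thus $\overline{D}=X$, and the same count gives $|D|\leq cc_{\aleph_\alpha}(X)\aleph_\alpha$. This route is self-contained, and I would present it as the main argument, mentioning the route through $ac_{\aleph_\alpha}(X)$ as an immediate consequence of the earlier corollary.

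There is no real obstacle; the only point needing care is the cardinal arithmetic. A union of $\kappa$ sets, each of size $<\aleph_\alpha$, has size at most $\kappa\cdot\aleph_\alpha$, and this uses the axiom of choice, as is standard. Note also that the bound involves the product with $\aleph_\alpha$, not a strict inequality. So I would not attempt anything sharper, such as $d(X)<\aleph_\alpha$, without the hypothesis $cc_{\aleph_\alpha}(X)=\aleph_0$, which is the case covered by the earlier equivalence theorem.
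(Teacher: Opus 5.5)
Your proof is correct, and your main (direct) argument is exactly the paper's: take an $\aleph_\alpha$-closed family $\mathcal{C}$ with $|\mathcal{C}|+\aleph_0=cc_{\aleph_\alpha}(X)$, note that $\bigcup\mathcal{C}$ is dense, and count $|\bigcup\mathcal{C}|\leq cc_{\aleph_\alpha}(X)\aleph_\alpha$; your observation that every singleton lies in $\mathcal{A}_{\aleph_\alpha}$ is the detail behind the paper's ``easy to check'' density claim. Your alternative route through $ac_{\aleph_\alpha}(X)\leq cc_{\aleph_\alpha}(X)$ is also valid, although that inequality follows directly from the same singleton observation (every $\aleph_\alpha$-closed family is an almost-$\aleph_\alpha$-cover), with no need to go through the pseudocharacter and character of $C_{\aleph_\alpha}(X)$.
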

\begin{proof}
	Let $\mathcal{A}$ be a $\aleph_\alpha$-closed family of sets in $X$ such that $|\mathcal{A}|+\aleph_0=cc_{\aleph_\alpha}(X)$. It is easy to check that $B=\bigcup\limits_{A\in \mathcal{A}} A$ is dense in $X$ and hence, $d(X)\leq cc_{\aleph_\alpha}(X)\aleph_\alpha$.
\end{proof}
In the proof of the preceding theorem if $cc_{\aleph_\alpha}(X)<\aleph_\alpha$, then it is immediate to observe that $|B|<\aleph_\alpha$. Therefore, we have the following corollary.
\begin{corollary}
 $cc_{\aleph_\alpha}(X)< \aleph_\alpha$ if and only if $X$ is $\aleph_\alpha$-separable. 
\end{corollary}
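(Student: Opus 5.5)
The plan is to prove both implications directly from the definition of $cc_{\aleph_\alpha}(X)$, using the construction from the proof of the preceding theorem ($d(X)\leq cc_{\aleph_\alpha}(X)\aleph_\alpha$) for one direction and the example of the $\aleph_\alpha$-separable case from the theorem characterizing $cc_{\aleph_\alpha}(X)=\aleph_0$ for the other.

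For ($\Leftarrow$), suppose $X$ is $\aleph_\alpha$-separable, with a dense $D$ satisfying $|D|<\aleph_\alpha$. Then the single-member family $\{D\}$ is $\aleph_\alpha$-closed, since $A\subseteq X=\overline{D}$ for every $A\in\mathcal{A}_{\aleph_\alpha}$. Hence $cc_{\aleph_\alpha}(X)=\aleph_0$. Because $\aleph_\alpha$ is a transfinite cardinal with $\aleph_\alpha\geq\aleph_1$ (the standing assumption from Section 3 onward), we get $\aleph_0<\aleph_\alpha$, as required.

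For ($\Rightarrow$), take an $\aleph_\alpha$-closed family $\mathcal{A}$ with $|\mathcal{A}|+\aleph_0=cc_{\aleph_\alpha}(X)<\aleph_\alpha$, and set $B=\bigcup\mathcal{A}$.

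First, $B$ is dense in $X$. For any $x\in X$ we have $\{x\}\in\mathcal{A}_{\aleph_\alpha}$, so some $A\in\mathcal{A}$ satisfies $x\in\overline{A}\subseteq\overline{B}$.

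Second, we need $|B|<\aleph_\alpha$, and this is the only delicate step. A priori $|B|\leq|\mathcal{A}|\cdot\sup_{A\in\mathcal{A}}|A|$, which is a union of fewer than $\aleph_\alpha$ sets each of size less than $\aleph_\alpha$.
\begin{itemize}
\item If $\aleph_\alpha$ is regular, for example a successor such as $\aleph_1$ or $\aleph_{\beta+1}$, the bound $|B|<\aleph_\alpha$ follows at once from regularity.
\item If $\aleph_\alpha$ is singular, I would not rely on cardinal arithmetic. Instead I would try to replace $\mathcal{A}$ by a single member: first build some $C\in\mathcal{A}_{\aleph_\alpha}$ meeting every $A\in\mathcal{A}$ in a "large" way, then use the $\aleph_\alpha$-closed property to obtain one $A_0\in\mathcal{A}$ with $C\subseteq\overline{A_0}$, and then argue that $\overline{A_0}=X$.
\end{itemize}

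I expect the singular case to be the main obstacle. The reduction to one member $A_0$ may fail as stated. If it does, I would state the corollary under the assumption that $\aleph_\alpha$ is regular, which covers the intended case $\aleph_1$.

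With $|B|<\aleph_\alpha$ in hand, $d(X)\leq|B|<\aleph_\alpha$, so $X$ is $\aleph_\alpha$-separable.
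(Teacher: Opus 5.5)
Your ($\Leftarrow$) direction, the density of $B=\bigcup\mathcal{A}$, and the regular case of ($\Rightarrow$) are correct. That regular-case argument is exactly the paper's; the paper simply asserts that $|B|<\aleph_\alpha$ is ``immediate''.

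The gap is the singular case, which the statement does cover. Your sketch there is not a proof, and its guiding idea cannot work as stated. To get $\overline{A_0}=X$ from $C\subseteq\overline{A_0}$, you would need $C$ itself to be dense of size $<\aleph_\alpha$, which is what you are trying to prove. Retreating to regular $\aleph_\alpha$ proves a weaker result.

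Your suspicion of the cardinality step is justified, though. For singular $\aleph_\alpha$, the union of a family witnessing $cc_{\aleph_\alpha}(X)$ can really have size $\aleph_\alpha$. For example, if $\mathfrak{c}>\aleph_\omega$, take $X=\mathbb{R}$, $\aleph_\alpha=\aleph_\omega$, and the countable $\aleph_\omega$-closed family $\{\mathbb{Q}\}\cup\{A_n:n\in\mathbb{N}\}$ with $A_n\subseteq\mathbb{R}$ and $|A_n|=\aleph_n$; its union has cardinality $\aleph_\omega$. So the paper's one-line justification is itself valid only for regular $\aleph_\alpha$.

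The missing idea is to let $C$ consist of witnesses to non-density, rather than points inside the members. Let $\mathcal{A}$ be $\aleph_\alpha$-closed with $|\mathcal{A}|<\aleph_\alpha$, and suppose no member of $\mathcal{A}$ is dense. Choose $x_A\in X\setminus\overline{A}$ for each $A\in\mathcal{A}$, and put $C=\{x_A:A\in\mathcal{A}\}$. Then $|C|\le|\mathcal{A}|<\aleph_\alpha$, so $C\in\mathcal{A}_{\aleph_\alpha}$. Hence $C\subseteq\overline{A_0}$ for some $A_0\in\mathcal{A}$, contradicting $x_{A_0}\notin\overline{A_0}$. Thus a single member $A_0$ is dense, and $d(X)\le|A_0|+\aleph_0<\aleph_\alpha$, with no cardinal arithmetic at all.

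This is the paper's own proof of $(2)\Rightarrow(1)$ in the theorem characterizing $cc_{\aleph_\alpha}(X)=\aleph_0$, with ``countable'' replaced by ``of cardinality $<\aleph_\alpha$''. It treats regular and singular $\aleph_\alpha$ uniformly and makes the union $B$ unnecessary.
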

\begin{corollary}
	$cc_{\aleph_\alpha}(X)=\aleph_\alpha$ then $d(X)=\aleph_\alpha$.
\end{corollary}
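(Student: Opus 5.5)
The corollary asserts: if $cc_{\aleph_\alpha}(X)=\aleph_\alpha$, then $d(X)=\aleph_\alpha$. I would obtain this by combining the upper bound from the preceding theorem with the characterisation in the preceding corollary.

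\emph{Upper bound.} The theorem just proved gives $d(X)\leq cc_{\aleph_\alpha}(X)\aleph_\alpha$. Substituting $cc_{\aleph_\alpha}(X)=\aleph_\alpha$ yields $d(X)\leq \aleph_\alpha\cdot\aleph_\alpha=\aleph_\alpha$, using only $\kappa\cdot\kappa=\kappa$ for infinite cardinals $\kappa$.

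\emph{Lower bound.} The preceding corollary says $cc_{\aleph_\alpha}(X)<\aleph_\alpha$ if and only if $X$ is $\aleph_\alpha$-separable. Since here $cc_{\aleph_\alpha}(X)=\aleph_\alpha$, the left side fails, so $X$ is not $\aleph_\alpha$-separable. By the definition of $\aleph_\alpha$-separability this means $d(X)\geq\aleph_\alpha$.

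Combining the two bounds gives $d(X)=\aleph_\alpha$.

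The only point needing care is the direction ``$X$ is $\aleph_\alpha$-separable $\Rightarrow cc_{\aleph_\alpha}(X)<\aleph_\alpha$'' in the preceding corollary, which is not spelled out there. I would verify it directly. If $D$ is dense with $|D|<\aleph_\alpha$, then the single family $\{D\}$ is $\aleph_\alpha$-closed, because every $A\in\mathcal{A}_{\aleph_\alpha}$ satisfies $A\subseteq\overline{D}=X$. Hence $cc_{\aleph_\alpha}(X)=\aleph_0$. This is strictly less than $\aleph_\alpha$ whenever $\aleph_\alpha>\aleph_0$, which is the standing assumption $\aleph_\alpha\geq\aleph_1$.
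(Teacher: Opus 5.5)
Your proof is correct and follows the route the paper intends: the corollary is given without proof as an immediate consequence of the bound $d(X)\leq cc_{\aleph_\alpha}(X)\aleph_\alpha$, which gives $d(X)\leq\aleph_\alpha$, and the preceding characterization of $cc_{\aleph_\alpha}(X)<\aleph_\alpha$, which gives $d(X)\geq\aleph_\alpha$. Your direct check of the needed direction via the one-element family $\{D\}$ is the same argument the paper uses earlier to show that $\aleph_\alpha$-separability forces $cc_{\aleph_\alpha}(X)=\aleph_0$.
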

\begin{corollary}
	If $cc_{\aleph_\alpha}(X)\geq \aleph_\alpha$ then $d(X)\leq cc_{\aleph_\alpha}(X)$.
\end{corollary}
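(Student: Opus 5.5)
The plan is to combine the preceding theorem, $d(X)\leq cc_{\aleph_\alpha}(X)\aleph_\alpha$, with the standard rule for multiplying infinite cardinals; no new topology is needed.

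First, I would note that $cc_{\aleph_\alpha}(X)$ is infinite by definition, since it is $\aleph_0$ plus a minimum. The factor $\aleph_\alpha$ is also infinite. For infinite cardinals $\kappa,\lambda$ one has $\kappa\lambda=\max\{\kappa,\lambda\}$ by the Hessenberg theorem $\kappa\kappa=\kappa$. Under the hypothesis $cc_{\aleph_\alpha}(X)\geq\aleph_\alpha$, this gives
\[
cc_{\aleph_\alpha}(X)\,\aleph_\alpha=\max\{cc_{\aleph_\alpha}(X),\aleph_\alpha\}=cc_{\aleph_\alpha}(X).
\]

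Second, I would substitute this into the preceding theorem to obtain $d(X)\leq cc_{\aleph_\alpha}(X)\aleph_\alpha=cc_{\aleph_\alpha}(X)$, which is the claim.

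There is no real obstacle. The only point to check is that the preceding theorem is legitimately available. Its proof uses that for an $\aleph_\alpha$-closed family $\mathcal{A}$, the union $\bigcup\mathcal{A}$ is dense in $X$. This holds because every singleton $\{x\}$ lies in $\mathcal{A}_{\aleph_\alpha}$, so $x\in\overline{B}\subseteq\overline{\bigcup\mathcal{A}}$ for some $B\in\mathcal{A}$. Its proof also uses that $\bigl|\bigcup\mathcal{A}\bigr|\leq|\mathcal{A}|\cdot\aleph_\alpha$, since each member of $\mathcal{A}$ has cardinality less than $\aleph_\alpha$. Both facts are routine, so the corollary follows immediately from cardinal arithmetic.
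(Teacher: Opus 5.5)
Your proposal is correct and follows the paper's route. The paper reads the corollary off the preceding theorem $d(X)\leq cc_{\aleph_\alpha}(X)\aleph_\alpha$ via the cardinal identity $cc_{\aleph_\alpha}(X)\aleph_\alpha=cc_{\aleph_\alpha}(X)$, which holds whenever $cc_{\aleph_\alpha}(X)\geq\aleph_\alpha$. Your check of that theorem's two ingredients is also accurate: $\bigcup\mathcal{A}$ is dense because every singleton belongs to $\mathcal{A}_{\aleph_\alpha}$, and $\bigl|\bigcup\mathcal{A}\bigr|\leq|\mathcal{A}|\aleph_\alpha$.
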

\begin{theorem}
	For any space $X$, $$w(C_{\aleph_\alpha}(X))=cc_{\aleph_\alpha}(X)d(C_{\aleph_\alpha}(X)).$$
\end{theorem}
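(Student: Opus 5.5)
We prove $w(C_{\aleph_\alpha}(X))=cc_{\aleph_\alpha}(X)\cdot d(C_{\aleph_\alpha}(X))$ by establishing the two inequalities separately. The common tool is Theorem \ref{character Calpha}, which identifies $\chi(C_{\aleph_\alpha}(X))$ with $cc_{\aleph_\alpha}(X)$, together with the fact that $C_{\aleph_\alpha}(X)$ is a topological group, hence homogeneous.

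\emph{Lower bound.} For any space $Y$ we have $\chi(Y)\leq w(Y)$ and $d(Y)\leq w(Y)$. The first holds because a base restricts to a local base at each point. The second holds because choosing one point from each member of a base gives a dense set. Applying this with $Y=C_{\aleph_\alpha}(X)$ and using $\chi(C_{\aleph_\alpha}(X))=cc_{\aleph_\alpha}(X)$ from Theorem \ref{character Calpha}, we get
\[
cc_{\aleph_\alpha}(X)\cdot d(C_{\aleph_\alpha}(X))\leq w(C_{\aleph_\alpha}(X)).
\]

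\emph{Upper bound.} This is the substantive direction, since $w\leq \chi\cdot d$ fails for general spaces. We exploit the uniform structure of $C_{\aleph_\alpha}(X)$.

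\begin{enumerate}
\item Fix an $\aleph_\alpha$-closed family $\mathcal{B}$ with $|\mathcal{B}|+\aleph_0=cc_{\aleph_\alpha}(X)$.
\item Fix a dense set $\mathcal{D}\subseteq C_{\aleph_\alpha}(X)$ with $|\mathcal{D}|= d(C_{\aleph_\alpha}(X))$.
\item Form the family
\[
\mathcal{W}=\{B(g,B,\tfrac1n):g\in \mathcal{D},\ B\in\mathcal{B},\ n\in\mathbb{N}\}.
\]
Its cardinality is at most $cc_{\aleph_\alpha}(X)\cdot d(C_{\aleph_\alpha}(X))$.
\end{enumerate}

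\emph{Verifying that $\mathcal{W}$ is a base.} Let $f\in C(X)$, $A\in\mathcal{A}_{\aleph_\alpha}$ and $\epsilon>0$.

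\begin{enumerate}
\item Choose $B\in\mathcal{B}$ with $A\subseteq\overline{B}$, and choose $n$ with $\tfrac1n<\tfrac{\epsilon}{3}$.
\item By continuity, $\sup_{x\in \overline{B}}|h(x)|=\sup_{x\in B}|h(x)|$ for every $h\in C(X)$. Hence
\[
B(f,B,\tfrac1n)\subseteq B(f,A,\tfrac{\epsilon}{2}).
\]
This is the same observation used in the proof of Theorem \ref{character Calpha}.
\item By density of $\mathcal{D}$, pick $g\in\mathcal{D}\cap B(f,B,\tfrac1n)$. Then $f\in B(g,B,\tfrac1n)$.
\item For $h\in B(g,B,\tfrac1n)$, the triangle inequality gives $\sup_B|h-f|<\tfrac2n<\epsilon$. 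Hence
\[
B(g,B,\tfrac1n)\subseteq B(f,B,\tfrac2n)\subseteq B(f,A,\epsilon).
\]
\end{enumerate}

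Thus every basic neighbourhood of every point contains a member of $\mathcal{W}$ that contains the point, so $\mathcal{W}$ is a base. This gives $w(C_{\aleph_\alpha}(X))\leq cc_{\aleph_\alpha}(X)\cdot d(C_{\aleph_\alpha}(X))$.

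\emph{Main obstacle.} The step needing care is replacing $A$ by $\overline{B}$, and hence by $B$, inside the sup-norm balls. Strict inequality on $B$ only yields a non-strict inequality on $\overline{B}$. The margin $\tfrac2n<\epsilon$ absorbs this, so the halving of radii in the steps above is what makes the argument go through.
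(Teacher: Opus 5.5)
Your proof is correct and follows essentially the same route as the paper. The lower bound comes from $\chi\le w$, $d\le w$ and Theorem \ref{character Calpha}. The upper bound comes from showing that $\{B(g,B,\tfrac1n): g\in\mathcal{D},\ B\in\mathcal{B},\ n\in\mathbb{N}\}$ is a base, using $A\subseteq\overline{B}$ and the triangle inequality; the paper takes $\tfrac1n<\tfrac{\epsilon}{4}$ where you take $\tfrac1n<\tfrac{\epsilon}{3}$. Your worry about strict inequalities on $\overline{B}$ is unnecessary, since $\sup_{\overline{B}}|k|=\sup_B|k|$ for continuous $k$ already gives $B(f,B,r)\subseteq B(f,A,r)$ with no loss, so only the factor $2$ from the triangle inequality needs absorbing.
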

\begin{proof}
	For any space $Y$, $d(Y)\leq w(Y)  $ and $\chi(Y)\leq w(Y)$. Therefore, it follows from Theorem \ref{character Calpha} that $cc_{\aleph_\alpha}(X) d(C_{\aleph_\alpha}(X))\leq w(C_{\aleph_\alpha}(X))$. To prove the reverse inequality, let $\mathcal{B}$ be an $\aleph_\alpha$-closed family of sets in $X$ with $|\mathcal{B}|+\aleph_0=cc_{\aleph_\alpha}(X)$ and let $D$ be a dense subset of $C_{\aleph_\alpha}(X)$ such that $|D|=d(C_{\aleph_\alpha}(X))$. Suppose $\mathcal{F}=\{B(f,F,\frac{1}{n}):f\in D,F\in \mathcal{B}\text{ and }n\in \mathbb{N}\}$. We claim that $\mathcal{F}$ is an open base for the space $C_{\aleph_\alpha}(X)$. Indeed, for $h\in C(X)$, $A\in \mathcal{A}_{\aleph_\alpha}$ and $\epsilon>0$ we can select an $n\in \mathbb{N}$ with $\frac{1}{n}<\frac{\epsilon}{4}$ and $F\in \mathcal{B}$ such that $A\subseteq \overline{F}$ and a $g\in D\cap B(h,F,\frac{1}{n})$. It is not hard to check that $h\in B(g,F,\frac{1}{n})\subseteq B(h,A,\epsilon)$. Therefore, $w(C_{\aleph_\alpha}(X))\leq|\mathcal{F}|\leq |D||\mathcal{B}|\aleph_0=cc_{\aleph_\alpha}(X)d(C_{\aleph_\alpha}(X))$.
\end{proof}

\begin{theorem}
	For any space $X$, $$d(C_{\aleph_\alpha}(X))\leq cc_{\aleph_\alpha}(X) L(C_{\aleph_\alpha}(X)).$$
\end{theorem}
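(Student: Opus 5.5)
The plan is to use the uniform structure from the preceding lemma together with a standard covering argument. Fix an $\aleph_\alpha$-closed family $\mathcal{B}$ in $X$ with $|\mathcal{B}|+\aleph_0=cc_{\aleph_\alpha}(X)$, and write $\lambda=L(C_{\aleph_\alpha}(X))$. For each pair $(B,n)$ with $B\in\mathcal{B}$ and $n\in\mathbb{N}$, the family $\{B(f,B,\frac1n):f\in C(X)\}$ is an open cover of $C_{\aleph_\alpha}(X)$. By the definition of the Lindel\"of number it has a subcover indexed by some $D_{B,n}\subseteq C(X)$ with $|D_{B,n}|\leq\lambda$. Set $D=\bigcup\{D_{B,n}:B\in\mathcal{B},n\in\mathbb{N}\}$. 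Then
\[
|D|\leq |\mathcal{B}|\cdot\aleph_0\cdot\lambda= cc_{\aleph_\alpha}(X)\,L(C_{\aleph_\alpha}(X)),
\]
so it suffices to show that $D$ is dense in $C_{\aleph_\alpha}(X)$.

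For density, take a basic open set $B(h,A,\epsilon)$ with $A\in\mathcal{A}_{\aleph_\alpha}$ and $\epsilon>0$. Choose $F\in\mathcal{B}$ with $A\subseteq\overline{F}$ and $n$ with $\frac1n<\frac{\epsilon}{2}$. Since $\{B(f,F,\frac1n):f\in D_{F,n}\}$ covers $C(X)$, there is $g\in D_{F,n}$ with $h\in B(g,F,\frac1n)$. This gives $\sup_{x\in F}|h(x)-g(x)|<\frac1n$.

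The only step needing care is passing from $F$ to $\overline{F}$. Because $h-g$ is continuous, $|h-g|\leq\frac1n$ on $\overline{F}$, hence on $A$. Therefore $\sup_{x\in A}|h(x)-g(x)|\leq\frac1n<\epsilon$, so $g\in D\cap B(h,A,\epsilon)$. This is the same closure trick used in the proof of the weight formula just above, and it is why the margin $\frac1n<\frac{\epsilon}{2}$ is chosen: it absorbs the passage from a strict to a non-strict inequality.

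I expect no real obstacle beyond this bookkeeping. One small point is that the Lindel\"of number includes $+\aleph_0$, so the countable union over $n$ causes no trouble. Another is that $\mathcal{B}$ may be finite, which is harmless because of the $+\aleph_0$ in $cc_{\aleph_\alpha}(X)$.
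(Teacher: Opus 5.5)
Your proof is correct and follows the paper's argument: you take, for each $B$ in a minimal $\aleph_\alpha$-closed family and each $m$, a subcover of $\{B(f,B,\frac{1}{m}):f\in C(X)\}$ of size at most $L(C_{\aleph_\alpha}(X))$, and you check density using some $F$ with $A\subseteq\overline{F}$. Your continuity argument that $|h-g|\leq\frac1n$ on $\overline{F}$ supplies the step the paper asserts without comment, namely $B(h,B,\frac1m)\subseteq B(h,\overline{B},\epsilon)$; there any $\frac1n<\epsilon$ would already suffice.
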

\begin{proof}
	Let $\mathcal{B}$ be an $\aleph_\alpha$-closed family of sets in $X$ with $|\mathcal{B}|+\aleph_0=cc_{\aleph_\alpha}(X)$. Let $B\in \mathcal{B}$ and $m\in \mathbb{N}$, then $\{B(f,B,\frac{1}{m}):f\in C(X)\}$ is an open cover of $C_{\aleph_\alpha}(X)$. Therefore, there exists a subcover $\{B(f,B,\frac{1}{m}):f\in C_{B,m}\}$ for $C_{\aleph_\alpha}(X)$ with $C_{B,m}\subseteq C(X)$ with $|C_{B,m}|=L(C_{\aleph_\alpha}(X))$. Let D=$\bigcup\limits_{B\in \mathcal{B}}\bigcup\limits_{m\in \mathbb{N}}C_{B,m}$. We show that $D$ is dense in $C_{\aleph_\alpha}(X)$. Let $B(h,A,\epsilon)$ be a  basic open set in $C_{\aleph_\alpha}(X)$ where $\epsilon>0$, $A\in \mathcal{A}_{\aleph_\alpha}$ and $h\in C(X)$. Then there exists $B\in \mathcal{B}$ such that $A\subseteq \overline{B}$ and there exists $m\in \mathbb{N}$ such that $\frac{1}{m}<\frac{\epsilon}{4}$. Now, there exists $f\in C_{B,m}$ such that $h\in B(f,B,\frac{1}{m})$, this implies that $f\in B(h,B,\frac{1}{m})\subseteq B(h,\overline{B},\epsilon)\subseteq B(h,A,\epsilon)$ and note that $f\in D$. Observe that the denseness of $D$ in $C_{\aleph_\alpha}(X)$ implies that $d(C_s(X))\leq |D|\leq |\mathcal{B}|\aleph_0 L(C_s(X))=cc_{\aleph_\alpha}(X) L(C_{\aleph_\alpha}(X))$.
\end{proof}
\begin{theorem}\label{5.5}
	For a space $X$, $$d(C_{\aleph_\alpha}(X))\leq cc_{\aleph_\alpha}(X)c(C_{\aleph_\alpha}(X)).$$
\end{theorem}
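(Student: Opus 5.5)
We follow the pattern of the preceding theorem (the bound via $L(C_{\aleph_\alpha}(X))$), replacing the Lindelöf covering argument by a maximal cellular family. Fix an $\aleph_\alpha$-closed family $\mathcal{B}$ of sets in $X$ with $|\mathcal{B}|+\aleph_0=cc_{\aleph_\alpha}(X)$, and write $\kappa=c(C_{\aleph_\alpha}(X))$. For each $B\in\mathcal{B}$ and $m\in\mathbb{N}$ we choose, by Zorn's lemma, a maximal family $\mathcal{U}_{B,m}$ of pairwise disjoint sets of the form $B(f,B,\frac{1}{m})$, $f\in C(X)$. By the definition of the Souslin number, $|\mathcal{U}_{B,m}|\leq\kappa$. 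Let $C_{B,m}\subseteq C(X)$ be the set of centres of the members of $\mathcal{U}_{B,m}$, and put $D=\bigcup_{B\in\mathcal{B}}\bigcup_{m\in\mathbb{N}}C_{B,m}$. Then $|D|\leq|\mathcal{B}|\aleph_0\kappa=cc_{\aleph_\alpha}(X)\,c(C_{\aleph_\alpha}(X))$, so it remains to show that $D$ is dense.

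The key observation is that the pseudometric $\rho_B(f,g)=\sup_{x\in B}\min\{|f(x)-g(x)|,1\}$ gives the balls $B(f,B,r)$ for $r\le 1$. Take a basic open set $B(h,A,\epsilon)$. By the $\aleph_\alpha$-closed property, pick $B\in\mathcal{B}$ with $A\subseteq\overline{B}$, and pick $m$ with $\frac{2}{m}<\frac{\epsilon}{2}$. By maximality of $\mathcal{U}_{B,m}$, the ball $B(h,B,\frac{1}{m})$ meets some $B(f,B,\frac{1}{m})$ with $f\in C_{B,m}$. The triangle inequality then gives $\sup_{x\in B}|f(x)-h(x)|<\frac{2}{m}$.

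Next we pass from $B$ to $\overline{B}$ by continuity: since $f-h$ is continuous, $|f(x)-h(x)|\le\frac{2}{m}<\epsilon$ for every $x\in\overline{B}\supseteq A$. Hence $f\in B(h,A,\epsilon)\cap D$, exactly as in the proof of the preceding theorem. This establishes density and therefore the inequality.

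The only delicate step is the maximality/triangle-inequality argument. Maximality only guarantees that the ball around $h$ meets some ball in the family, not that $h$ lies in one of them, so the radius has to be doubled; this is why $m$ is chosen with $\frac{2}{m}<\frac{\epsilon}{2}$ rather than the $\frac{1}{m}<\frac{\epsilon}{4}$ used earlier. One must also check that the basic sets $B(f,B,r)$ are indeed open and nonempty in $C_{\aleph_\alpha}(X)$, which holds since $B\in\mathcal{A}_{\aleph_\alpha}$. This ensures that the disjoint family is a cellular family, so its cardinality is bounded by $c(C_{\aleph_\alpha}(X))$.
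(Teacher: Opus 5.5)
Your proof is essentially the paper's: for each $B$ in a minimal $\aleph_\alpha$-closed family and each $m$ you take a maximal cellular family of balls $B(f,B,\frac{1}{m})$ and use their centres as the dense set, and density follows from maximality, the triangle inequality, and passing from $B$ to $\overline{B}\supseteq A$ by continuity. Your radius condition $\frac{2}{m}<\frac{\epsilon}{2}$ is the same as the paper's $\frac{1}{m}<\frac{\epsilon}{4}$. One small fix, which the paper's definition of $D$ also glosses over: a ball $B(f,B,\frac{1}{m})$ has many centres (every $f'$ with $f'|_B=f|_B$), so choose one centre per member of $\mathcal{U}_{B,m}$ to guarantee $|C_{B,m}|\le\kappa$.
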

\begin{proof}
	Let $\mathcal{B}$ be an $\aleph_\alpha$-closed family of sets in $X$ such that $|\mathcal{B}|+\aleph_0= cc_{\aleph_\alpha}(X)$. For each $A\in \mathcal{B}$ and $k\in \mathbb{N}$, there exists a cellular family $\mathcal{F}_A^k$ of basic open sets in the space $C_{\aleph_\alpha}(X)$ of the type $B(f,A,\frac{1}{k})$ - indeed we can simply take $\mathcal{F}_A^k=\{B(f,A,\frac{1}{k})\}$, for any $f\in C(X)$. A straight forward use of Zorn's Lemma ensures that there exists for any $A\in \mathcal{B}$ and any $k\in \mathbb{N}$, a maximal cellular family $\gamma_A^k$ of basic open sets of the form $B(f,A,\frac{1}{k})$, $f\in C(X)$. Let $D=\{f\in C(X):\text{ there exist }A\in \mathcal{B}\text{ and }k\in \mathbb{N}\text{ such that }B(f,A,\frac{1}{k})\in \gamma_A^k\}$. It is clear that $|D|\leq cc_{\aleph_\alpha}(X)c(C_{\aleph_\alpha}(X))$. So, to complete this theorem we shall show that $D$ is dense in $C_{\aleph_\alpha}(X)$. We choose $B\in \mathcal{A}_{\aleph_\alpha}$, $\epsilon>0$ and $g\in C(X)$. Since $\mathcal{B}$ is an $\aleph_\alpha$-closed family of sets in $X$, there exists $F\in \mathcal{B}$ such that $B\subseteq \overline{F}$ and we can choose a $m\in \mathbb{N}$ $\frac{1}{m}<\frac{\epsilon}{4}$.  Now either $B(g,F,\frac{1}{m})\in \gamma_F^m$ or $B(g,F,\frac{1}{m})\notin \gamma_F^m$. In the second case, due to the maximality of $\gamma_F^m$, there exists $B(p,F,\frac{1}{m})\in \gamma_F^m$ such that $B(p,F,\frac{1}{m})\cap B(g,F,\frac{1}{m})\neq \emptyset$. Thus, in any case, we can write $B(p,F,\frac{1}{m})\cap B(g,F,\frac{1}{m})\neq \emptyset$ for some $B(p,F,\frac{1}{m})\in \gamma_F^m$. We choose $j\in B(p,F,\frac{1}{m})\cap B(g,F,\frac{1}{m}$). Then $|p-g|\leq|p-j|+|j-g|<\frac{\epsilon}{2}$ and hence, $p\in B(g,B,\epsilon)\cap D$.
	
\end{proof}

Since for any space $Y$, $w(Y)\geq \chi(Y)$ and $w(Y)\geq L(Y)$ \cite[Theorem 2.1]{j CARDINAL1980}, the following cardinal inequality is a direct consequence of Theorem \ref{character Calpha}.

\begin{theorem}
	For a space $X$, $$w(C_{\aleph_\alpha}(X))\geq cc_{\aleph_\alpha}(X) L(C_{\aleph_\alpha}(X)).$$
\end{theorem}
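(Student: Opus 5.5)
The inequality comes from two standard facts about an arbitrary space $Y$ together with Theorem \ref{character Calpha}. Take $Y=C_{\aleph_\alpha}(X)$. I will show separately that $\chi(Y)\leq w(Y)$ and $L(Y)\leq w(Y)$. Since the product of two infinite cardinals equals their maximum, $cc_{\aleph_\alpha}(X)L(Y)$ is the larger of the two. So it is enough to bound each factor by $w(Y)$.

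\textbf{Character.} By Theorem \ref{character Calpha} we have $\chi(C_{\aleph_\alpha}(X))=cc_{\aleph_\alpha}(X)$. So I only need $\chi(Y)\leq w(Y)$. Fix a base $\mathcal{W}$ of $Y$ with $|\mathcal{W}|+\aleph_0=w(Y)$, and a point $y\in Y$. The members of $\mathcal{W}$ that contain $y$ form a local base at $y$. Hence $\chi(y,Y)\leq w(Y)$, and taking the supremum over $y$ gives $\chi(Y)\leq w(Y)$.

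\textbf{Lindel\"{o}f number.} Let $\mathcal{U}$ be an open cover of $Y$. Consider the subfamily $\mathcal{W}'=\{W\in\mathcal{W}: W\subseteq U\text{ for some }U\in\mathcal{U}\}$; it covers $Y$ because $\mathcal{W}$ is a base. For each $W\in\mathcal{W}'$ choose one $U_W\in\mathcal{U}$ with $W\subseteq U_W$. Then $\{U_W:W\in\mathcal{W}'\}$ is a subcover of $\mathcal{U}$ of cardinality at most $|\mathcal{W}|$. Therefore $L(Y)\leq w(Y)$; this is exactly the content of \cite[Theorem 2.1]{j CARDINAL1980}.

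Combining the two bounds gives
$$cc_{\aleph_\alpha}(X)\,L(C_{\aleph_\alpha}(X))=\max\{\chi(Y),L(Y)\}\leq w(Y)=w(C_{\aleph_\alpha}(X)).$$
No step is a real obstacle; the nontrivial input is the identification of $\chi(C_{\aleph_\alpha}(X))$ with $cc_{\aleph_\alpha}(X)$, which Theorem \ref{character Calpha} already supplies.
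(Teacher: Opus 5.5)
Your proof is correct and follows the paper's own argument. The paper likewise obtains the inequality directly from $\chi(Y)\leq w(Y)$ and $L(Y)\leq w(Y)$, combined with $\chi(C_{\aleph_\alpha}(X))=cc_{\aleph_\alpha}(X)$ from Theorem \ref{character Calpha}; you simply write out the routine proofs of the two standard inequalities that the paper cites.
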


As for any space $Y$, $c(Y)\leq d(Y)$ and $c(Y)\leq s(Y)\leq w(Y)$, we get the comprehensive result as a consequence of the previously discussed theorems.

\begin{theorem}\label{all cardinal calpha}
	For any space $X$, we have $$w(C_{\aleph_\alpha}(X))=cc_{\aleph_\alpha}(X)d(C_{\aleph_\alpha}(X))=cc_{\aleph_\alpha}(X)L(C_{\aleph_\alpha}(X))=cc_{\aleph_\alpha}(X)s(C_{\aleph_\alpha}(X))$$$$=cc_{\aleph_\alpha}(X)c(C_{\aleph_\alpha}(X)).$$
\end{theorem}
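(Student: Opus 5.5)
Write $Y=C_{\aleph_\alpha}(X)$ and $\kappa=cc_{\aleph_\alpha}(X)$. The plan is to squeeze all five quantities between the largest and the smallest. Since $w(Y)=\kappa\, d(Y)$ has already been proved, it suffices to show
\[
w(Y)\;\le\;\kappa\, c(Y)\;\le\;\kappa\, s(Y),\qquad \kappa\, c(Y)\;\le\;\kappa\, d(Y),\qquad \kappa\, c(Y)\;\le\;\kappa\, L(Y)\;\le\;w(Y).
\]
Once these hold, every expression lies between $\kappa\,c(Y)$ and $w(Y)$. The chain then closes, because $w(Y)\le\kappa\,c(Y)$.

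The upper bounds are mostly already available. From $w(Y)=\kappa\, d(Y)$ and Theorem~\ref{5.5}, which gives $d(Y)\le \kappa\, c(Y)$, we get $w(Y)=\kappa\, d(Y)\le \kappa\cdot\kappa\, c(Y)=\kappa\, c(Y)$. The inequality $w(Y)\ge \kappa\, L(Y)$ is the preceding theorem, and $w(Y)\ge \kappa\,s(Y)$ follows from $\chi(Y)=\kappa$ (Theorem~\ref{character Calpha}) together with $s(Y)\le w(Y)$. The lower bounds $c(Y)\le d(Y)$ and $c(Y)\le s(Y)$ are the standard inequalities quoted just before the statement, and they give $\kappa\,c(Y)\le\kappa\,d(Y)$ and $\kappa\,c(Y)\le\kappa\,s(Y)$ immediately.

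The step I expect to be the main obstacle is $c(Y)\le L(Y)$, which fails for general spaces. I would handle it through the density bound $d(Y)\le \kappa\, L(Y)$, proved two theorems earlier. Using it, $\kappa\,c(Y)\le \kappa\, d(Y)\le \kappa\cdot\kappa\, L(Y)=\kappa\, L(Y)$, so the Lindel\"of term also sits between $\kappa\,c(Y)$ and $w(Y)$.

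Putting the pieces together, each of $\kappa\,d(Y)$, $\kappa\,L(Y)$, $\kappa\,s(Y)$ and $\kappa\,c(Y)$ lies between $\kappa\,c(Y)$ and $w(Y)\le\kappa\,c(Y)$. Hence all of them equal $w(Y)$. The only care needed is to absorb $\kappa\cdot\kappa=\kappa$ and the factors $\aleph_0$ consistently, which is legitimate because $\kappa$ is infinite by definition.
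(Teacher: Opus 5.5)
Your proposal is correct and follows essentially the same route as the paper. The paper also gets the theorem by combining $w=cc_{\aleph_\alpha}(X)\,d$, $d\le cc_{\aleph_\alpha}(X)\,c$, $d\le cc_{\aleph_\alpha}(X)\,L$ and $cc_{\aleph_\alpha}(X)\,L\le w$ with the standard inequalities $c\le d$, $c\le s\le w$ and $\chi\le w$; the paper leaves this assembly implicit, and you spell it out, in particular routing the Lindel\"of term through $d\le cc_{\aleph_\alpha}(X)\,L$ instead of the false general inequality $c\le L$.
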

The next theorem will be useful towards giving a characterization of spaces $X$ for which $C_{\aleph_\alpha}(X)$ is separable.

\begin{theorem}
	For a space $X$ and a transfinite cardinal number $\aleph_\alpha$, $d(C_{\aleph_\alpha}(X))<\aleph_\alpha$ if and only if $d(C_u(X))<\aleph_\alpha$.
\end{theorem}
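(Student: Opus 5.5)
The direction from $C_u(X)$ to $C_{\aleph_\alpha}(X)$ is immediate: the topology of $C_{\aleph_\alpha}(X)$ is coarser than that of $C_u(X)$, so every subset dense in $C_u(X)$ is dense in $C_{\aleph_\alpha}(X)$. Hence $d(C_{\aleph_\alpha}(X))\leq d(C_u(X))$, and $d(C_u(X))<\aleph_\alpha$ gives $d(C_{\aleph_\alpha}(X))<\aleph_\alpha$.

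For the converse, my plan is to show that $d(C_{\aleph_\alpha}(X))<\aleph_\alpha$ forces $X$ to be $\aleph_\alpha$-separable. Theorem \ref{qu} then gives $C_{\aleph_\alpha}(X)=C_u(X)$, so the two densities coincide and the claim follows.

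Fix a dense set $\mathcal{D}\subseteq C_{\aleph_\alpha}(X)$ with $|\mathcal{D}|<\aleph_\alpha$. Note that $\mathcal{D}$ is infinite, since a finite dense subset of the Hausdorff space $C_{\aleph_\alpha}(X)$ would equal the whole space, yet $C(X)$ contains all constants. For each pair $d\neq d'$ in $\mathcal{D}$ I choose a countable set $A_{d,d'}\subseteq X$ such that
$$\sup_{x\in A_{d,d'}}|d(x)-d'(x)|=\sup_{x\in X}|d(x)-d'(x)|,$$
where the right-hand side is allowed to be $\infty$; such a set exists because a supremum is attained along a sequence. Put $A=\bigcup_{d\neq d'}A_{d,d'}$. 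Then $|A|\leq|\mathcal{D}|^2\aleph_0<\aleph_\alpha$, using the standing assumption $\aleph_\alpha\geq\aleph_1$. It remains to show that $A$ is dense in $X$.

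Suppose instead that some $y\in X\setminus\overline{A}$ exists. By Tychonoffness, pick $f\in C(X)$ with $f(\overline{A})=\{0\}$ and $f(y)=1$. Let $E=A\cup\{y\}$, which lies in $\mathcal{A}_{\aleph_\alpha}$. Consider the nonempty open sets $B(\underline{0},E,\frac14)$ and $B(f,E,\frac14)$. By density choose $d_1\in\mathcal{D}\cap B(\underline{0},E,\frac14)$ and $d_2\in\mathcal{D}\cap B(f,E,\frac14)$. These satisfy:
\begin{itemize}
\item $d_1(y)<\frac14$ and $d_2(y)>\frac34$, so $d_1\neq d_2$ and $|d_1(y)-d_2(y)|>\frac12$;
\item on $A$ both functions lie within $\frac14$ of $0$, because $f=0$ on $A$, so $\sup_{A_{d_1,d_2}}|d_1-d_2|\leq\sup_A|d_1-d_2|\leq\frac12$.
\end{itemize}
By the choice of $A_{d_1,d_2}$, the second point gives $\sup_X|d_1-d_2|\leq\frac12$, which contradicts the first point. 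So $A$ is dense and $d(X)\leq|A|<\aleph_\alpha$.

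The main obstacle is this converse: turning a small dense subset of $C_{\aleph_\alpha}(X)$ into a small dense subset of $X$. The naive choice of a single separating point for each pair in $\mathcal{D}$ gives no contradiction, because $d_1$ and $d_2$ might differ only slightly there. The key device is to make each $A_{d,d'}$ witness the full supremum of $|d-d'|$, which is why countable sets $A_{d,d'}$ and the hypothesis $\aleph_\alpha\geq\aleph_1$ are used.
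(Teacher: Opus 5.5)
Your proof is correct, but it takes a different route from the paper's.

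The paper never passes through $X$. It shows directly that the small dense set $D\subseteq C_{\aleph_\alpha}(X)$ is already dense in $C_u(X)$. Suppose a uniform ball $B(f,\epsilon)$ missed $D$. Then for each $p\in D$ one picks a single point $x_p$ where $|f-p|$ is large. The set $B(f,\{x_p:p\in D\},\frac{\epsilon}{2})$ is then a nonempty open set of $C_{\aleph_\alpha}(X)$ missing $D$, a contradiction.

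Because the target $f$ is fixed before the points are chosen, one point per element of $D$ suffices. In your argument, $A$ must be built before $f$, since $f$ is manufactured from a point $y\notin\overline{A}$. That is exactly why you need the pairwise supremum-witnessing sets $A_{d,d'}$. The paper's route is therefore shorter and uses neither Tychonoffness nor Theorem \ref{qu}.

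Incidentally, the paper asserts $|f(x_p)-p(x_p)|\geq\epsilon$, which need not hold because the supremum may not be attained. One should take $|f(x_p)-p(x_p)|>\frac{\epsilon}{2}$ instead. Your careful handling of suprema avoids exactly this kind of slip.

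Your route, on the other hand, proves a stronger intermediate fact: $d(C_{\aleph_\alpha}(X))<\aleph_\alpha$ forces $X$ to be $\aleph_\alpha$-separable. Hence $C_{\aleph_\alpha}(X)=C_u(X)$ as spaces, not merely as far as density is concerned, and by Theorem \ref{metrizability} $C_{\aleph_\alpha}(X)$ is completely metrizable. This also gives the implication ``$C_{\aleph_\alpha}(X)$ separable $\Rightarrow$ $X$ is $\aleph_\alpha$-separable'' in Theorem \ref{all equivCalpha} directly. It avoids the external characterization of spaces $X$ with $C_u(X)$ separable.
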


\begin{proof}
	It is easy to see that $d(C_{\aleph_\alpha}(X))\leq d(C_u(X))$. So, one part is trivial. Let, $d(C_{\aleph_\alpha}(X))<\aleph_\alpha$ and $D$ be a dense set in $C_{\aleph_\alpha}(X)$ such that $|D|=d(C_{\aleph_\alpha}(X))$. We claim that $D$ is dense in $C_u(X)$. If possible, let $D$ be not dense in $C_u(X)$. So, there exists $f\in C(X)$ and $\epsilon>0$ such that $B(f,\epsilon)\cap D=\emptyset$. Therefore, for each $p\in D$ there exists a point $x_p\in X$ such that $|f(x_p)-p(x_p)|\geq\epsilon$. Let $A=\{x_p:p\in D\}$, then $A\in \mathcal{A}_{\aleph_\alpha}$. It is easy to check that $B(f,A,\frac{\epsilon}{2})\cap D=\emptyset$ - which is a contradiction.
\end{proof}
\begin{corollary}\label{Cu separability}
	For a space $X$, the following statements are equivalent.
	\begin{enumerate}
		\item $C_u(X)$ is separable.
		\item $C_{\aleph_\alpha}(X)$ is separable for any transfinite cardinal $\aleph_\alpha$.
	\end{enumerate}

\end{corollary}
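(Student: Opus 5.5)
The corollary follows almost directly from the theorem just proved, that $d(C_{\aleph_\alpha}(X))<\aleph_\alpha$ if and only if $d(C_u(X))<\aleph_\alpha$. The plan is to read separability as $d=\aleph_0$ and to use that $\aleph_\alpha\geq\aleph_1$ for every transfinite cardinal other than $\aleph_0$. The case $\aleph_\alpha=\aleph_0$, where $C_{\aleph_0}(X)=C_p(X)$, needs separate attention, because the preceding theorem with $\aleph_\alpha=\aleph_0$ speaks about $d<\aleph_0$. That is impossible by the definition of $d$, so the theorem says nothing there.

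For $(1)\Rightarrow(2)$, no cardinal juggling is needed. The topology of $C_{\aleph_\alpha}(X)$ is coarser than that of $C_u(X)$ for every $\aleph_\alpha$, including $\aleph_0$. This is the chain $C_p(X)\subseteq C_s(X)\subseteq\cdots\subseteq C_u(X)$ recorded in Section 2, and it holds because $B_u(f,\epsilon)\subseteq B(f,A,\epsilon)$. Hence a dense subset of $C_u(X)$ is dense in $C_{\aleph_\alpha}(X)$, so $d(C_{\aleph_\alpha}(X))\leq d(C_u(X))=\aleph_0$ and $C_{\aleph_\alpha}(X)$ is separable.

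For $(2)\Rightarrow(1)$, I read (2) as asserting separability for every transfinite $\aleph_\alpha$, and in particular for $\aleph_1$. I therefore fix any $\aleph_\alpha\geq\aleph_1$ for which $C_{\aleph_\alpha}(X)$ is separable. Then $d(C_{\aleph_\alpha}(X))=\aleph_0<\aleph_1\leq\aleph_\alpha$, so the preceding theorem yields $d(C_u(X))<\aleph_\alpha$. This alone does not give $\aleph_0$ unless $\aleph_\alpha=\aleph_1$, so I use $\aleph_1$ itself. The theorem with $\aleph_\alpha=\aleph_1$ gives $d(C_u(X))<\aleph_1$, that is, $d(C_u(X))=\aleph_0$, and $C_u(X)$ is separable. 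Alternatively, one can repeat the argument of that proof directly with $\aleph_\alpha$ arbitrary. Take a countable dense $D\subseteq C_{\aleph_\alpha}(X)$. If $D$ missed some $B_u(f,\epsilon)$, choose witnesses $x_p$ with $|f(x_p)-p(x_p)|\geq\epsilon$ for $p\in D$. The set $A=\{x_p:p\in D\}$ is countable, so $A\in\mathcal{A}_{\aleph_\alpha}$ whenever $\aleph_\alpha\geq\aleph_1$, and $B(f,A,\epsilon/2)\cap D=\emptyset$, which is a contradiction.

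The main obstacle is the degenerate case $\aleph_\alpha=\aleph_0$. Separability of $C_p(X)$ does not imply separability of $C_u(X)$, since for a separable metrizable $X$ the space $C_p(X)$ is separable while $C_u(X)$ need not be. So the corollary must be read with $\aleph_\alpha\geq\aleph_1$, in line with the standing assumption adopted from Section 3.2 onward. With that convention, the argument above shows that (2) is equivalent to separability of $C_{\aleph_\alpha}(X)$ for even a single $\aleph_\alpha\geq\aleph_1$, because any countable dense set witnesses the hypothesis of the preceding theorem.
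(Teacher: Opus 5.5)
Your proposal is correct and follows the paper's route. For $(1)\Rightarrow(2)$ you use that $C_{\aleph_\alpha}(X)$ is coarser than $C_u(X)$, and for $(2)\Rightarrow(1)$ you apply the preceding density theorem at $\aleph_1$ (or rerun its proof with a countable dense set); your remark that a single-cardinal reading must exclude $\aleph_0$ is a valid clarification of the statement. One small point, inherited from the paper's proof: $p\notin B_u(f,\epsilon)$ only gives $\sup_{x\in X}|f(x)-p(x)|\geq\epsilon$, and this supremum need not be attained, so choose $x_p$ with $|f(x_p)-p(x_p)|>\epsilon/2$ instead, which still yields $B(f,A,\epsilon/2)\cap D=\emptyset$.
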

It is well-known that $C_u(X)$ is separable if and only if $X$ is compact and metrizable \cite[see at the end of Page 112]{MHHM 1998}. This fact, together with Theorem \ref{all cardinal calpha} and Corollary \ref{Cu separability} yield the following theorem.
\begin{theorem}\label{all equivCalpha}
	The following statements are equivalent for a space $X$:
	\begin{enumerate}
		\item $C_{\aleph_\alpha}(X)$ is second countable.
		\item $C_{\aleph_\alpha}(X)$ is separable. \item $C_{\aleph_\alpha}(X)$ is Lindel\"{o}f and $X$ is $\aleph_\alpha$-separable.
		\item $C_{\aleph_\alpha}(X)$ has the Souslin property and $X$ is $\aleph_\alpha$-separable.
		\item $C_{\aleph_\alpha}(X)$ has a countable spread and $X$ is $\aleph_\alpha$-separable.
		\item $C_u(X)$ is separable.
		\item $X$ is compact and metrizable.
	\end{enumerate}
\end{theorem}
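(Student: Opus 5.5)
The plan is to arrange the seven conditions in a cycle of implications. Corollary \ref{Cu separability} handles the core equivalence $(2)\Leftrightarrow(6)$. The quoted fact from \cite{MHHM 1998} handles $(6)\Leftrightarrow(7)$. Theorem \ref{all cardinal calpha}, together with the characterizations of $\aleph_\alpha$-separability, ties in conditions (1), (3), (4), (5).

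The implications I would prove are the following.
\begin{itemize}
\item $(1)\Rightarrow(2)$ holds in any space, since $d(Y)\le w(Y)$.
\item $(2)\Leftrightarrow(6)\Leftrightarrow(7)$ is Corollary \ref{Cu separability} combined with the cited result.
\item $(2)\Rightarrow(3),(4),(5)$: a separable space has the Souslin property, so $c(C_{\aleph_\alpha}(X))=\aleph_0$. We still need $X$ to be $\aleph_\alpha$-separable. By (7), $X$ is compact metrizable, hence separable, so $d(X)=\aleph_0<\aleph_\alpha$ because $\aleph_\alpha\ge\aleph_1$. Then Theorem \ref{metrizability} makes $C_{\aleph_\alpha}(X)=C_u(X)$ metrizable. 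For metric spaces, separability, the Lindel\"of property, countable spread and the Souslin property all coincide, so (3), (4), (5) follow.
\item $(3),(4),(5)\Rightarrow(1)$: if $X$ is $\aleph_\alpha$-separable, then $cc_{\aleph_\alpha}(X)=\aleph_0$ by the theorem characterizing $\aleph_\alpha$-separability via $cc_{\aleph_\alpha}$. Theorem \ref{all cardinal calpha} then gives
$w(C_{\aleph_\alpha}(X))=\aleph_0\cdot L(C_{\aleph_\alpha}(X))=\aleph_0\cdot s(C_{\aleph_\alpha}(X))=\aleph_0\cdot c(C_{\aleph_\alpha}(X))$.
Under each hypothesis the relevant cardinal on the right is $\aleph_0$, so $w(C_{\aleph_\alpha}(X))=\aleph_0$, which is (1).
\end{itemize}

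The main obstacle is the step $(2)\Rightarrow(3)$–$(5)$, because it needs the extra information that $X$ is $\aleph_\alpha$-separable. Separability of $C_{\aleph_\alpha}(X)$ does not obviously give this directly. The natural route is through (6) and (7), as above. An alternative direct argument would evaluate a countable dense set of functions to separate points of $X$, but routing through compact metrizability avoids that.

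I would also double-check that $\aleph_\alpha\ge\aleph_1$ is in force, as assumed from Section 3 onward. Otherwise $\aleph_\alpha$-separability would fail for infinite $X$, and the equivalence with (7) breaks for $C_p(X)$.
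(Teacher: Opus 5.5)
Your proposal is correct and follows essentially the paper's route. Corollary~\ref{Cu separability} plus the cited characterization of separability of $C_u(X)$ give $(2)\Leftrightarrow(6)\Leftrightarrow(7)$; separability of a compact metrizable $X$ makes $C_{\aleph_\alpha}(X)=C_u(X)$ separable metrizable; and Theorem~\ref{all cardinal calpha} with $cc_{\aleph_\alpha}(X)=\aleph_0$ handles (3)--(5). Your write-up is more explicit than the paper's, which only spells out the passage to (1) and tacitly uses separability of $C_u(X)$ to get from metrizable to second countable, and your caveat that $\aleph_\alpha\geq\aleph_1$ is essential is apt.
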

\begin{proof}
	We only need to prove $(6)\implies (1)$: Indeed, if $X$ is compact and metrizable, then it is separable and hence, $C_{\aleph_\alpha}(X)=C_u(X)$. Therefore, $C_{\aleph_\alpha}(X)$ becomes second countable as it is now a metrizable space.
\end{proof}
\begin{remark}
	Theorem 5.6 in \cite{PM 2013} is a special case of the above theorem on choosing $\aleph_\alpha=\aleph_1$. Also, a special case of the equivalence of (4) and (7) in the above theorem is precisely the Theorem 7.3 in \cite{RS 2021}. 
\end{remark}
	
\end{document}